\def\jobis#1{FF\fi
  \def\preedicate{#1}%
  \edef\preedicate{\expandafter\strip@prefix\meaning\preedicate}%
  \edef\job{\jobname}%
  \ifx\job\preedicate
}
\if\jobis{proposal}%
 \def\try{subsection}%
  \def\try{section}%
\theoremstyle{plain}
\newtheorem{theorem}{Theorem}[\try]
\newtheorem{corollary}[theorem]{Corollary}
\newtheorem{lemma}[theorem]{Lemma}
\newtheorem{claim}[theorem]{Claim}
\newtheorem{proposition}[theorem]{Proposition}
\newtheorem{definition-lemma}[theorem]{Definition-Lemma}
\newtheorem{definition-proposition}[theorem]{Definition-Proposition}
\newtheorem{definition-theorem}[theorem]{Definition-Theorem}
\newtheorem{definition}[theorem]{Definition}
\newtheorem{remark}[theorem]{Remark}
\def\lfomitlist#1.#2.#3.#4.{{#1}_0,{#1}_1 #2 \dots #2\hat{{#1}_{#4}} #2\dots #2 {#1}_{#3}}
\def\alist#1.#2.#3.{{#1}_1 #2 {#1}_2 #2\dots #2 {#1}_{#3}}
\def\zlist#1.#2.#3.{#1_0 #2 #1_1 #2\dots #2 #1_{#3}}
\def\ltomitlist#1.#2.#3.{{#1}_0,{#1}_1 #2 \dots #2\hat {{#1}_i} #2\dots #2 {#1}_{#3}}
\def\lomitlist#1.#2.#3.{{#1}_0 #2 {#1}_1 #2 \dots #2 \hat {{#1}_i} #2 \dots #2 {#1}_{#3}}
\def\lmap#1.#2.#3.{#1 \overset{#2}{\longrightarrow} #3}
\def\mes#1.#2.#3.{#1 \longrightarrow #2 \longrightarrow #3}
\def\ses#1.#2.#3.{0\longrightarrow #1 \longrightarrow #2 \longrightarrow #3 \longrightarrow 0}
\def\les#1.#2.#3.{0\longrightarrow #1 \longrightarrow #2 \longrightarrow #3}
\def\res#1.#2.#3.{#1 \longrightarrow #2 \longrightarrow #3\longrightarrow 0}
\def\Hi#1.#2.#3.{\text {Hilb}^{#1}_{#2}(#3)}
\def\ten#1.#2.#3.{#1\underset {#2}{\otimes} #3}
\def\mderiv#1.#2.#3.{\frac {d^{#3} #1}{d #2^{#3}}}
\def\mfderiv#1.#2.#3.{\frac {\partial^{#3} #1}{\partial #2}}
\def\ggr#1.#2.#3.{\mathbb{G}_{#1}(#2,#3)}
\def\llist#1.#2.{{#1}_1,{#1}_2,\dots,{#1}_{#2}}
\def\ulist#1.#2.{{#1}^1,{#1}^2,\dots,{#1}^{#2}}
\def\lomitlist#1.#2.{{#1}_1,{#1}_2,\dots,\hat {{#1}_i}, \dots, {#1}_{#2}}
\def\lomitlistz#1.#2.{{#1}_0,{#1}_1,\dots,\hat {{#1}_i}, \dots, {#1}_{#2}}
\def\loc#1.#2.{\Cal O_{#1,#2}}
\def\fderiv#1.#2.{\frac {\partial #1}{\partial #2}}
\def\deriv#1.#2.{\frac {d #1}{d #2}}
\def\map#1.#2.{#1 \longrightarrow #2}
\def\rmap#1.#2.{#1 \dasharrow #2}
\def\emb#1.#2.{#1 \hookrightarrow #2}
\def\non#1.#2.{\text {Spec }#1[\epsilon]/(\epsilon)^{#2}}
\def\Hi#1.#2.{\text {Hilb}^{#1}(#2)}
\def\sym#1.#2.{\operatorname {Sym}^{#1}(#2)}
\def\Hb#1.#2.{\text {Hilb}_{#1}(#2)}
\def\Hm#1.#2.{\Hom_{#1}(#2)}
\def\prd#1.#2.{{#1}_1\cdot {#1}_2\cdots {#1}_{#2}}
\def\Bl #1.#2.{\operatorname {Bl}_{#1}#2}
\def\pl #1.#2.{#1^{\otimes #2}}
\def\mgn#1.#2.{\overline {M}_{#1,#2}}
\def\ialist#1.#2.{{#1}_1 #2 {#1}_2 #2 {#1}_3 #2\dots}
\def\pair#1.#2.{\langle #1, #2\rangle}
\def\gproj#1.#2.{\mathbb{P}_{#1}(#2)}
\def\gpr #1.#2.{\mathbb{P}^{#1}_{#2}}
\def\gaf #1.#2.{\mathbb{A}^{#1}_{#2}}
\def\vandermonde#1.#2.{\left|
\begin{matrix}
1 & 1 & 1 & \dots & 1\\
{#1}_1 & {#1}_2 & {#1}_3 & \dots & {#1}_{#2}\\
{#1}_1^2 & {#1}_2^2 & {#1}_3^2 & \dots & {#1}_{#2}^2\\
\vdots & \vdots & \vdots & \ddots & \vdots\\
{#1}_1^{#2-1} & {#1}_2^{#2-1} & {#1}_2^{#2-1} & \dots & {#1}_{#2}^{#2-1}\\
\end{matrix}
\right|
}
\def\vandermondet#1.#2.{\left|
\begin{matrix}
1 & {#1}_1   & {#1}_1^2 & \dots & {#1}_1^{#2-1}\\
1 & {#1}_2   & {#1}_2^2 & \dots & {#1}_2^{#2-1}\\
1 & {#1}_3   & {#1}_3^2 & \dots & {#1}_3^{#2-1}\\
\vdots & \vdots & \vdots & \ddots & \vdots\\
1 & {#1}_{#2}& {#1}_{#2}^2 & \dots & {#1}_{#2}^{#2-1}\\
\end{matrix}
\right|
}
\def\gr#1.#2.{\mathbb{G}(#1,#2)}
\def\bdd#1.#2.{{#1}_{\rfdown #2.}}
\def\ideal#1.{I_{#1}}
\def\ring#1.{\mathcal {O}_{#1}}
\def\fring#1.{\hat{\mathcal {O}}_{#1}}
\def\aring#1.{{\mathcal {O}}_{#1}^{\text{an}}}
\def\proj#1.{\mathbb {P}(#1)}
\def\pr #1.{\mathbb {P}^{#1}}
\def\dpr #1.{\hat{\mathbb {P}}^{#1}}
\def\af #1.{\mathbb{A}^{#1}}
\def\Hz #1.{\mathbb{F}_{#1}}
\def\Hbz #1.{\overline{\mathbb {F}}_{#1}}
\def\fb#1.{\underset {#1} {\times}}
\def\rest#1.{\underset {\ \ring #1.} \to \otimes}
\def\au#1.{\operatorname {Aut}\,(#1)}
\def\deg#1.{\operatorname {deg } (#1)}
\def\pic#1.{\operatorname {Pic}\,(#1)}
\def\pico#1.{\operatorname{Pic}^0(#1)}
\def\picg#1.{\operatorname {Pic}^G(#1)}
\def\n1#1.{\operatorname{N^1}(#1)}
\def\ner#1.{\operatorname{NS}(#1)}
\def\rdown#1.{\llcorner#1\lrcorner}
\def\rfdown#1.{\lfloor{#1}\rfloor}
\def\rup#1.{\ulcorner{#1}\urcorner}
\def\rfup#1.{\lceil{#1}\rceil}
\def\sship#1.{\langle{#1}\rangle}
\def\bp#1.{#1^{{}\leq 1}}
\def\rcup#1.{\lceil{#1}\rceil}
\def\cone#1.{\operatorname {NE}(#1)}
\def\mone#1.{\operatorname {NM}(#1)}
\def\none#1.{\operatorname {NF}(#1)}
\def\ccone#1.{\overline{\operatorname {NE}}(#1)}
\def\cmone#1.{\overline{\operatorname {NM}}(#1)}
\def\cnone#1.{\overline{\operatorname {NF}}(#1)}
\def\cbig#1.{\overline{\operatorname {B}(#1)}}
\def\coef#1.{\frac{(#1-1)}{#1}}
\def\vit#1.{D_{\langle #1 \rangle}}
\def\mm#1.{\overline {M}_{0,#1}}
\def\Hone#1.{H^1(#1,{\ring #1.})}
\def\ac#1.{\overline {\mathbb F}_{#1}}
\def\adj#1.{\frac {#1-1}{#1}}
\def\spn#1.{\overline{#1}}
\def\pek#1.#2.{\Cal P^{#1}(#2)}
\def\plk#1.#2.{\Cal P^{\leq #1}(#2)}
\def\ev#1.{\operatorname{ev_{#1}}}
\def\ilist#1.{{#1}_1,{#1}_2,\dotsc}
\def\bminv#1.{(\nu_1,s_1;\nu_2,s_2;\dots ;\nu_{#1},s_{#1};\nu_{r+1})}
\def\zinv#1.{(\nu_1,s_1;\nu_2,s_2;\dots ;\nu_{#1},s_{#1};0)}
\def\iinv#1.{(\nu_1,s_1;\nu_2,s_2;\dots ;\nu_{#1},s_{#1};\infty)}
\def\scr#1.{\mathbf{\EuScript{#1}}}
\def\mg#1.{\overline {M}_{#1}}
\def\inter#1.{\underset #1{\cdot}}
\def\cate#1.{\text{(\underline{#1})}}
\def\dls#1.{\overrightarrow{#1}}
\def\id#1.{\text{id}_{#1}}
\def\Diff{\operatorname{Diff}}
\def\Hom{\operatorname{Hom}}
\def\Proj{\operatorname{Proj}}
\def\Supp{\operatorname{Supp}}
\def\coeff{\operatorname{coeff}}
\def\dim{\operatorname{dim}}
\def\deg{\operatorname{deg}}
\def\im{\operatorname{Im}}
\def\Exc{\operatorname{Exc}}
\def\Div{\operatorname{Div}}
\def\mult{\operatorname{mult}}
\def\rest{\operatorname{res}}
\def\vol{\operatorname{vol}}
\def\C`har{\operatorname{char}}
\def\lct{\operatorname{lct}}
\def\C{\mathbb C}
\def\e{\Cal E}
\def\e1{E_1}
\def\e2{E_2}
\def\mapdown#1{\big\downarrow\rlap{$\vcenter
{\hbox{$\scriptstyle#1$}}$}}
\def\mapse#1{
{\vcenter{\hbox{$\mathop{\smash{\raise1pt\hbox{$\diagdown$}\!\lower7pt
\hbox{$\searrow$}}\vphantom{p}}\limits_{#1}\vphantom{\mapdown{}}$}}}}
\def\VR#1.{height#1pt&\omit&&\omit&&\omit&&\omit&&\omit&\cr}
\def\VRT#1.{height#1pt&\omit&&\omit&\cr}
\renewcommand*\env@matrix[1][*\c@MaxMatrixCols c]{%
  \hskip -\arraycolsep
  \let\@ifnextchar\new@ifnextchar
  \array{#1}}
\begin{document}
\title[Boundedness of varieties]{Boundedness of varieties of log general type}
\date{\today}
\author{Christopher D. Hacon}
\address{Department of Mathematics \\
University of Utah\\
155 South 1400 East\\
JWB 233\\
Salt Lake City, UT 84112, USA}
\email{hacon@math.utah.edu}
\author{James M\textsuperscript{c}Kernan}
\address{Department of Mathematics\\
University of California, San Diego\\
9500 Gilman Drive \# 0112\\
La Jolla, CA  92093-0112, USA}
\email{jmckernan@math.ucsd.edu}
\author{Chenyang Xu}
\address{BICMR\\
Peking University\\
Beijing, 100871, China}
\email{cyxu@math.pku.edu.cn}

\thanks{The first author was partially supported by DMS-1300750, DMS-1265285 and a grant
  from the Simons foundation, the second author was partially supported by NSF research
  grant no: 0701101, no: 1200656 and no: 1265263 and this research was partially funded by
  the Simons foundation and by the Mathematische Forschungsinstitut Oberwolfach and the
  third author was partially supported by The National Science Fund for Distinguished
  Young Scholars (11425101) grant from China.  Part of this work was completed whilst the
  second and third authors were visiting the Freiburg Institute of Advanced Studies and
  they would like to thank Stefan Kebekus and the Institute for providing such a congenial
  place to work.  We would also like to thank the referee for a careful reading of the
  paper and for some very useful comments.}

\begin{abstract} We survey recent results on the boundedness of the moduli functor of stable pairs.
\end{abstract}

\maketitle

\tableofcontents

\makeatletter
\renewcommand{\thetheorem}{\thesubsection.\arabic{theorem}}
\@addtoreset{theorem}{subsection}
\makeatother

\section{Introduction} 

The purpose of this paper is to give an overview of recent results on the moduli of
varieties of general type.  We start with a gentle introduction to the subject, reviewing
the case of curves and surfaces as motivation for some of the definitions.  Then we switch
gears a little and collect together in one place an account of boundedness of the moduli
functor.  None of the results here are new but we thought it would be useful to present
them together, as currently they are spread over several papers.  We also take this
opportunity to present an alternative argument for one step of the proof.  Due to
constraints imposed by space, we do not give full details for many of the proofs; anyone
wanting to see more details should look at the original papers.

The theory of moduli in higher dimensions is extremely rich and interesting and so we are
obliged to skip many interesting topics, which are fortunately covered in the many
excellent surveys and books, see for example \cite{Kollar10a}, \cite{Kollar13} and
\cite{Kovacs09}.  We focus on two aspects of the construction, what we need to add to get
a compact moduli space and how to prove boundedness.  We start with what we should add to
get a compact moduli space.

The moduli space $M_g$ of smooth curves of genus $g\geq 2$ is a quasi-projective variety
of dimension $3g-3$.  The moduli space of stable curves $\mg g.$ is a geometrically
meaningful compactification of $M_g$, so that $\mg g.$ is projective and $M_g$ is an open
subset.  Geometrically meaningful refers to the fact the added points correspond to
geometric objects which are as close as possible to the original objects.  In the case of
$M_g$ we add stable curves $C$, connected curves of arithmetic genus $g$, with nodal
singularities, such that the automorphism group is finite, or better (and equivalently),
the canonical divisor $K_C$ is ample.

We adopt a similar definition of stable in higher dimensions.
\begin{definition} A {\bf semi log canonical model} $(X,B)$ is a projective semi log
canonical pair (cf. \S \ref{ss-nc}) such that $K_X+B$ is ample.  Fix $n\in \mathbb{N}$,
$I\subset [0,1]$ and $d\in \mathbb{R} _{>0}$.  Let $\mathfrak{F}_{\rm slc}(n,I,d)$ be the
set of all $n$-dimensional semi log canonical models such that the coefficients of $B$
belong to $I$ (that is, $\coeff(B)\subset I$), $K_X+B$ is $\mathbb{Q}$-Cartier and
$(K_X+B)^n=d$.
\end{definition}

We now attempt to give some motivation for the admittedly technical definition of semi log
canonical models.

\subsection{Semi log canonical models} 

There are in general many degenerations of the same family of varieties.  Given a moduli
problem properness corresponds to existence and uniqueness of the limit.  Given a family
of smooth curves there is a unique stable limit, as proved by Deligne and Mumford
\cite{DM69}.

We review the construction of the stable limit.  Let $f\colon\map X^0.C^0.$ be a family of
smooth curves of genus $g\geq 2$ over a smooth curve $C^0=C\setminus 0$ where $C$ is an
affine curve and $0$ is a closed point.  By semistable reduction, after replacing $C^0$ by
an appropriate base change, we may assume that there is a proper surjective morphism
$f\colon \map X.C.$ such that $X$ is smooth and the central fibre $X_0$ is reduced with
simple normal crossings.  The choice of $X_0$ is not unique, since we are free to blow up
the central fibre.  So we run the minimal model program over $C$, contracting $-1$-curves
(that is, curves $E\cong \pr 1.$ such that $E^2=K_X\cdot E=-1$) in the central fibre.  We
end with a relative minimal model $\map X^m.C.$, so that $X^m$ is smooth and $K_{X^m/C}$
is nef over $C$.  If we further contract all $-2$-curves, that is, curves $E\cong \pr 1.$
such that $E^2=-2$ and $K_X\cdot E=0$ then we obtain the relative canonical model
$\map X^c.C.$.  The model $X^c$ is characterised by the fact that it has Gorenstein
canonical (aka Du Val, aka ADE) singularities and $K_{X^c/C}$ is ample over $C$.

A key observation is that we can construct the relative canonical model directly as 
\[
X^c=\Proj_CR(X,K_X) \qquad \text{where} \qquad R(X,K_X)=\bigoplus_{m\geq 0}H^0(X,\ring X.(mK_X))
\]
is the canonical ring; note that since $C$ is affine, $H^0(X,\ring X.(mK_X))$ can be
identified with the $\ring C.$-module $f_*\ring X.(mK_X)$.  Observe that $X^c$ is
isomorphic to $X$ over $C^0$.  Since the relative canonical model is unique, it follows
that the family above has a unique compactification to a family of stable curves (that is,
the moduli functor of stable curves is proper).

Here is another instructive example (cf. \cite{Kollar10a}).  Let $S$ be any smooth
projective surface such that $K_S$ is ample.  Consider the family $X=S\times \af 1.$ and
three sections $C_i$ with $i\in \{1,2,3\}$ which meet as transversely as possible in a
point $(p,0)\in S\times 0$.  Blowing up the $C_i$ in different orders we obtain two
families $X^1$ and $X^2$ which are isomorphic over $\af 1. \setminus 0$ but have distinct
central fibres $X^1_0\ne X^2_0$.  Therefore the corresponding moduli functor is not
proper.  If however we only consider canonical models, then this problem does not appear
since the relative canonical model
\[
\Proj R(X^i,K_{X^i})=\Proj (\bigoplus _{m\geq 0} H^0(X^i,\ring X^i.(mK_{X^i})))\cong X
\]
is unique.

Properness of the moduli functor of semi log canonical models is established in
\cite{HX11}.  The proof is similar to the argument sketched above for stable curves,
except that an ad hoc argument is necessary to construct the relative canonical model, as
the minimal model program for semi log canonical pairs is only known to hold in special
cases.

The moduli space $M_{g,n}$ of smooth curves $C$ of genus $g$ with $n$ points $\llist p.n.$
is a natural generalisation of the moduli space of curves.  It has a natural
compactification $\mgn g.n.$, the moduli space of stable curves of genus $g$ with $n$
points.  The points of $\mgn g.n.$ correspond to connected nodal curves with $n$ labelled
points $\llist p.n.$ which are not nodes such that $K_S+\Delta$ is ample, where $\Delta$
is the sum of the labelled points.  Therefore a stable pointed curve is the same as a semi
log canonical model (up to ordering the labelled points), with coefficient set $I=\{1\}$.

There are many reasons to consider labelled points.  $\mgn 0.n.$ is a non-trivial moduli
space with a very interesting geometry and yet it is given by an explicit blow up of
projective space.  On the other hand, allowing the coefficients of $\Delta$ to vary, so
that we take different choices for the coefficient set $I$, gives a way to understand the
extremely rich geometry of the moduli space of curves.  For different choices of $I$ we
get slightly different moduli problems and so we get different birational models of
$\mgn g.n.$, \cite{Hassett02}.  Finally the normalisation of a stable curve is a stable
pointed curve such that the inverse image of the nodes are labelled points.  Studying
stable pointed curves offers an inductive way to understand the geometry of $\mg g.$.

There is a similar picture in higher dimensions.  We know of the existence of a moduli
space of semi log canonical models in many cases.  We will sometimes refer to this space
as the KSBA compactification (constructed by Koll\'ar, Shepherd-Barron and Alexeev).  If
$S$ is a cubic surface in $\pr 3.$ then $K_S+\Delta=-8K_S$ is ample and log canonical,
where $\Delta$ is the sum of the twenty seven lines, so that $(X,\Delta)$ is a semi log
canonical model.  Therefore a component of the KSBA compactification with $I=\{1\}$ gives
a moduli space of cubic surfaces, \cite{HKT09}.  If $C$ is a smooth plane curve of degree
$d>3$ then $K_S+tC$ is ample for any $t>3/d$.  Therefore a component of the KSBA
compactification for suitable choice of coefficient set $I$ gives a compactification of
the moduli space of plane curves of degree $d$, \cite{Hacking04}.  On the other hand, if
we allow the coefficients of $\Delta$ to vary then this induces birational maps between
moduli spaces and we can connect two moduli spaces by a sequence of such transformations.

If $(X,\Delta)$ is a semi log canonical pair then $X$ is in general not normal.  If
$\nu\colon\map Y.X.$ is the normalisation then we may write 
\[
K_Y+\Gamma=\nu^*(K_X+\Delta).
\]
The divisor $\Gamma$ is the strict transform of $\Delta$ plus the double locus taken with
coefficient one.  If $\Delta=0$ then $\Gamma=0$ if and only if $X$ is normal.  The pair
$(Y,\Gamma)$ is log canonical and it is a disjoint union of log canonical pairs
$(Y_i,\Gamma_i)$.  The pair $(X,\Delta)$ is obtained from $(Y,\Gamma)$ by an appropriate
idenfication of the double locus.  By a result of Koll\'ar (cf. \cite{Kollar11} and
\cite[5.13]{Kollar10a}), if $(X,\Delta)$ is a semi log canonical model, then it can be
recovered from the data of $(Y,\Gamma)$ and an involution of the double locus (that is,
the components of $\Gamma$ which do not correspond to components of $\Delta$).

We have already seen that it is interesting to allow the coefficients of $\Delta$ to be
fractional.  It is also useful when trying to establish boundedness by induction on the
dimension.  For example if $(X,\Delta=S+B)$ is a log canonical pair, the coefficients of
$\Delta$ are all one and $S$ is a prime divisor which is a component of $\Delta$ then by
adjunction (cf. Theorem \ref{tSLA}) 
\[
(K_X+B)|_S=K_S+\Diff_S(B-S)
\]
where $(S,\Diff_S(B-S))$ is a log canonical pair and the coefficients of $\Diff_S(B-S)$
belong to 
\[
J=\{\, 1-\frac 1n \,|\, n\in \mathbb{N} \,\} \cup \{1\}.
\]
In fact the coefficients of $\Diff_S(B-S)$ belong to $J$ whenever the coefficients of $B$
belong $J$.  As $J$ is the smallest set containing $1$ closed under taking the different,
the set of coefficients $J$ is sometimes called the \textbf{standard coefficient set}.

Note that the set $J$ is not finite, however it satisfies the {\bf descending chain
  condition} (or DCC condition), that is, every non increasing sequence is eventually
constant.  To prove boundedness it is convenient to work with any coefficient set
$I\subset [0,1]$ which satisfies the DCC.

We note that there is one aspect of the theory of moduli in higher dimensions which is
quite different from the case of curves.  The moduli space $\mg g.$ of curves is
irreducible.  Moreover $M_g$ is a dense open subset.  However even if we take
$I=\emptyset$ and fix $d$ the KSBA moduli space might have more than one component and no
point of these components corresponds to a normal surface.

\subsection{Main Theorems}

Our main result (\cite{Alexeev94, AM04} for the surface case and \cite{HMX14} in general)
is the following.
\begin{theorem}\label{t-bound} Fix $n\in \mathbb{N}$, a set $I\subset [0,1]\cap \mathbb{Q}$ 
satisfying the DCC and $d>0$.  Then the set $\mathfrak{F}_{\rm slc}(n,I,d)$ is bounded,
that is, there exists a projective morphism of quasi-projective varieties
$\pi\colon\map \mathcal{X}.T.$ and a $\mathbb{Q}$-divsor $\mathcal{B}$ on $\mathcal{X}$
such that the set of pairs $\{\,(\mathcal{X} _t,\mathcal{B}_t)\,|\,t\in T\,\}$ given by
the fibres of $\pi$ is in bijection with the elements of $\mathfrak{F}_{\rm slc}(n,I,d)$.
\end{theorem}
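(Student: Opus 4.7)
The plan is to reduce first to the normal (log canonical) case, then prove boundedness for log canonical models by combining birational boundedness with a Hilbert-scheme argument. Given $(X,\Delta) \in \mathfrak{F}_{\rm slc}(n,I,d)$, let $\nu\colon Y \to X$ be the normalization and write $K_Y + \Gamma = \nu^*(K_X+\Delta)$. The pair $(Y,\Gamma)$ decomposes into log canonical pairs $(Y_i,\Gamma_i)$, each of volume at most $d$, and by Koll\'ar's gluing theorem $(X,\Delta)$ is recovered from $(Y,\Gamma)$ plus an involution of the double locus. Hence it suffices to prove boundedness for the set $\mathfrak{F}_{\rm lc}(n,I',d)$ of log canonical models of bounded volume with coefficients in an enlarged DCC set $I' \supset I \cup J$ (so that the different still has coefficients in $I'$), together with the observation that the gluing data comes from a bounded set once the normalizations vary in a bounded family and the number of irreducible components is bounded.

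For log canonical models, the first key input is the DCC of volumes: the set of volumes $\{\mathrm{vol}(K_X+B)\}$ taken over all lc pairs of dimension $n$ with coefficients in a DCC set satisfies the DCC. In particular, fixing $d$ leaves only finitely many accumulation-free possibilities for $\mathrm{coeff}(B)$, so one may assume the coefficients lie in a fixed finite set $I_0 \subset I$. This step relies crucially on the ACC for log canonical thresholds, proved by induction on dimension via global-to-local arguments involving non-klt centers.

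The next and heart of the proof is \emph{effective birationality}: find $m = m(n,I,d)$ such that for every $(X,B) \in \mathfrak{F}_{\rm lc}(n,I,d)$ the linear system $|m(K_X+B)|$ defines a birational map. The strategy is the standard tie-breaking / non-klt-center technique: cut out a minimal non-klt center through a very general point using a high multiple of $K_X+B$ (whose existence is guaranteed by the DCC of volumes and a volume comparison argument), lift sections from this center by Kawamata--Viehweg / Nadel vanishing, and induct on dimension. This is where the main technical difficulty lies, since the argument must be uniform in $(X,B)$: it requires simultaneously controlling singularities (via ACC of lct), volumes (via DCC of volumes), and coefficients, and it is where all the MMP machinery --- extension theorems, adjunction with fractional coefficients, and careful perturbation --- is deployed.

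Once effective birationality is known, boundedness follows by a relatively formal argument. The image $\phi_{|m(K_X+B)|}(X) \subset \mathbb{P}^N$ has bounded dimension and bounded degree, hence lies in finitely many components of a Hilbert scheme; parametrizing the pairs $(X,B)$ over this bounded base is done via a further Hilbert-scheme/Chow-variety argument applied to the graph of the birational map and to $B$, using the uniqueness of the log canonical model $X = \mathrm{Proj}\, R(X, K_X+B)$ within its birational class. Shrinking and stratifying the parameter space and spreading out $\mathcal{B}$ in families yields the desired $\pi\colon \mathcal{X} \to T$ and $\mathcal{B}$. The main obstacle throughout is the effective birationality step; the rest is reduction (normalization + gluing) and standard Hilbert-scheme bookkeeping.
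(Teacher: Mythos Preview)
Your overall architecture is correct and matches the paper: normalize, bound the log canonical pieces, then reassemble via Koll\'ar's gluing. The gap is in your last paragraph, where you assert that passing from effective birationality to boundedness is ``relatively formal.'' It is not, and the paper treats this as a separate substantial step.

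Effective birationality yields only \emph{log birational} boundedness: a family $(Z,D)\to T$ such that each $(X,B)$ is birational to some fibre $(Z_t,D_t)$ with $B$ pushing into $D_t$. To produce $(X,B)$ itself in a bounded family you must take a relative log canonical model over $T$. Two genuine problems arise. First, you need to know that the relative log canonical model of a log smooth family exists and that its formation commutes with restriction to fibres; this is a special case of abundance, established in the paper (Theorem~\ref{t-gmm}) using invariance of log plurigenera \`a la Berndtsson--P\u aun, and your appeal to ``$X=\operatorname{Proj} R(X,K_X+B)$'' does not supply this in families. Second, and more subtly, different pairs $(X,B)$ birational to the same $(Z_t,D_t)$ carry \emph{different} boundaries on any common resolution $Z'$, so there is no single relative log canonical model to take. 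The paper resolves this with a volume argument (Proposition~\ref{c-model} and Lemma~\ref{l-222}): one first constructs a specific higher model $Z'\to Z$ on which the pushed-forward boundary already has volume exactly $d$, and then shows that once the volume is pinned, enlarging the boundary does not change the log canonical model. Only after this can one take a single relative log canonical model over $T$ and conclude. Your Hilbert-scheme sketch does not address either issue.

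A smaller point: the DCC for volumes does not by itself force the coefficients of $B$ into a finite set; finiteness of coefficients is a \emph{consequence} of boundedness (as in Theorem~\ref{tcbound}), not an input to it.
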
 
The above result is equivalent to:
\begin{theorem}\label{tcbound} Fix $n\in \mathbb{N}$, a set $I\subset [0,1]\cap \mathbb{Q}$ 
satisfying the DCC and $d>0$.  Then there is an integer $r=r(n,I,d)$ such that if
$(X,B)\in \mathfrak{F}_{\rm slc}(n,I,d)$ then $r(K_X+B)$ is Cartier and very ample.

In particular, the coefficients of $B$ always belong to a finite set $I_0\subset I$.
\end{theorem}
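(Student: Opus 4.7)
The plan is to derive Theorem \ref{tcbound} from the boundedness statement Theorem \ref{t-bound}; the reverse implication is the easier direction and goes through the Hilbert scheme once one has a uniformly very ample multiple of $K_X+B$ of bounded volume. I will assume Theorem \ref{t-bound} and extract the uniform index $r$, then deduce the finiteness of the coefficient set as a formal corollary.

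Starting from the family $\pi\colon \mathcal{X}\to T$ of Theorem \ref{t-bound}, the first step is to produce a uniform Cartier index. After replacing $T$ by a finite stratification, I may assume $\pi$ is flat and that $K_{\mathcal{X}/T}+\mathcal{B}$ is relatively $\mathbb{Q}$-Cartier. By generic flatness and shrinking $T$, there is an integer $r_0$ such that $r_0(K_{\mathcal{X}/T}+\mathcal{B})$ is Cartier on an open dense subset; by Noetherian induction applied to the closed complement and taking the least common multiple of the finitely many integers that arise, one obtains an integer $r_1=r_1(n,I,d)$ such that $r_1(K_X+B)$ is Cartier for every $(X,B)\in \mathfrak{F}_{\rm slc}(n,I,d)$.

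Next I would upgrade Cartier to very ample in families. Since $K_{X_t}+B_t$ is ample on each fiber with the same volume $(K_{X_t}+B_t)^n=d$, and a fixed multiple is now Cartier, an effective Matsusaka-type theorem applied relatively to $\pi$ (equivalently, Koll\'ar's effective base point freeness combined with a standard passage from base point free to very ample in a family of bounded volume) produces a single integer $m=m(n,I,d)$ so that $mr_1(K_{X_t}+B_t)$ is very ample on every fiber. Set $r:=mr_1$. The coefficient statement is then formal: if $r(K_X+B)$ is Cartier, then pulling back to the normalization $\nu\colon Y\to X$ and using the adjunction description $\nu^*(K_X+B)=K_Y+\Gamma$, the divisor $rB$ has integer coefficients, so every coefficient of $B$ lies in $\tfrac{1}{r}\mathbb{Z}$; hence $I_0:=I\cap \tfrac{1}{r}\mathbb{Z}\cap[0,1]$ is finite.

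The main obstacle is extracting the uniform Cartier index $r_1$ from the bounded family: although each individual fiber has \emph{some} Cartier index, bounding it simultaneously relies on the non-trivial fact that the $\mathbb{Q}$-Gorenstein structure of the family $(\mathcal{X},\mathcal{B})\to T$ is well-behaved under stratification, which in the semi log canonical setting requires the deformation theory of slc pairs developed earlier in the paper. The effective very-ampleness step, while technical, is standard once the Cartier index is uniformly controlled and the volume is fixed.
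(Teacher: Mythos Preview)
Your deduction of Theorem \ref{tcbound} from Theorem \ref{t-bound} is essentially correct and is the natural direction once boundedness is in hand: stratify the finite-type base by Noetherian induction so that $\pi$ is flat and $K_{\mathcal{X}/T}+\mathcal{B}$ is $\mathbb{Q}$-Cartier and relatively ample, then extract a uniform Cartier index and a uniform very ample multiple. One correction: you do not need, and should not invoke, effective Matsusaka or Koll\'ar's effective basepoint-freeness here, since both are delicate or unavailable for semi log canonical singularities. All you actually need is that in a flat projective family over a finite-type base with a relatively ample invertible sheaf, a uniform multiple is relatively very ample; this is semicontinuity of cohomology plus Noetherian induction, with no effective input. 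Your identification of the $\mathbb{Q}$-Gorenstein structure of the family as the only real content is accurate.

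That said, the paper's logical route is the reverse of yours. The paper does not first prove Theorem \ref{t-bound} and then deduce Theorem \ref{tcbound}; rather, the heart of the argument (Proposition \ref{p-lc}) establishes uniform very ampleness \emph{directly} for log canonical pairs, by passing to the log birationally bounding family, using deformation invariance and Proposition \ref{c-model} to pin down the volume, constructing a relative log canonical model via Theorem \ref{t-gmm}, and reading off the very ample multiple there. Boundedness of log canonical pairs then follows immediately, and the semi log canonical case is handled by normalisation together with Koll\'ar's gluing result. So in the paper's development, the content of Theorem \ref{tcbound} (on the normalisation) is proved \emph{en route} to Theorem \ref{t-bound}, not afterward. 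Your argument fills in the converse implication that the paper asserts but does not spell out; its advantage is that it is soft and general, but it is logically downstream of the construction that the paper regards as the real work.
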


One of the main results necessary to prove the previous theorem is the following, which
was conjectured in \cite{Kollar92b, Alexeev94}.
\begin{theorem}\label{t-vol} Fix $n\in \mathbb{N}$ and a set $I\subset [0,1]\cap \mathbb{Q}$
satisfying the DCC.  Let
\[
\mathcal{V}(n,I)=\{\, d=(K_X+B)^n\,|\, (X,B)\in \mathfrak{F}_{\rm slc}(n,I)\,\}
\]
be the set of all possible volumes of semi log canonical models of dimension $n$ with
coefficients belonging to $I$.

Then $\mathcal{V}(n,I)$ satisfies the DCC.  In particular it has a minimal element
$v(n,I)>0$.
\end{theorem}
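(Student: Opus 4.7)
My approach is to reduce the slc case to the log canonical (normal) case, use birational boundedness to place all candidates inside a single family, and then derive a contradiction from DCC of coefficients via a limit argument.

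\textbf{Step 1 (Reduction to log canonical models).} Given $(X,B)\in\mathfrak{F}_{\rm slc}(n,I)$, pass to the normalization $\nu\colon Y\to X$ and write $K_Y+\Gamma=\nu^*(K_X+B)$ as in the introduction. Then $(Y,\Gamma)$ is a disjoint union of log canonical models $(Y_\alpha,\Gamma_\alpha)$ whose boundary coefficients lie in the DCC set $I\cup\{1\}$ (with the coefficient $1$ appearing along the double locus), and $\vol(K_X+B)=\sum_\alpha \vol(K_{Y_\alpha}+\Gamma_\alpha)$. Since a finite sum of elements of a DCC set of nonnegative numbers is again DCC, it is enough to prove the theorem for normal log canonical models.

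\textbf{Step 2 (Birational boundedness).} Suppose for contradiction there is a strictly decreasing sequence $v_i=\vol(K_{X_i}+B_i)$ with $(X_i,B_i)$ log canonical models and $\coeff(B_i)\subset I$. Then $v_i\le v_1$ is bounded above, so by the birational boundedness theorem for log canonical pairs of log general type with bounded volume and DCC coefficients, after passing to a subsequence there exists a projective morphism $f\colon\mathcal{Z}\to T$ of quasi-projective varieties and a reduced divisor $\mathcal{D}=\sum_j E_j\subset\mathcal{Z}$, so that for each $i$ there is a closed point $t_i\in T$ and a birational map $h_i\colon Z_{t_i}\dashrightarrow X_i$, where $Z_{t_i}$ is a smooth log resolution of $(X_i,B_i)$, with $(h_i)_*^{-1}B_i+\Exc(h_i)\subset\mathcal{D}_{t_i}$.

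\textbf{Step 3 (Limit argument).} Write $h_i^*(K_{X_i}+B_i)=K_{Z_{t_i}}+\Gamma_i$. Each positive coefficient of $\Gamma_i$ along a component $E_j$ is either the coefficient of $B_i$ on the corresponding strict transform (an element of $I$) or equals $1$ along an exceptional log canonical place; thus the coefficients of $\Gamma_i^+$ all lie in the DCC set $I\cup\{1\}$. By Noetherian induction on $T$ combined with DCC on each coefficient sequence, pass to a subsequence so that $t_i$ specializes to a fixed $t_0\in T$ and, for every $j$, the coefficient $a_j^i=\coeff_{E_j}\Gamma_i^+$ is eventually weakly increasing with limit $a_j$; set $\Theta=\sum_j a_j E_j|_{Z_{t_0}}$. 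Since the negative part of $\Gamma_i$ is $h_i$-exceptional, $v_i=\vol(K_{Z_{t_i}}+\Gamma_i^+)$, and combining monotonicity of volumes in the boundary with lower semicontinuity of volumes in the family yields
\[
v_i \;\ge\; \vol\bigl(K_{Z_{t_i}} + \textstyle\sum_j a_j E_j|_{Z_{t_i}}\bigr) \;\to\; \vol(K_{Z_{t_0}}+\Theta),
\]
and in fact $v_i$ must stabilize at this limit, contradicting the assumed strict decrease.

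\textbf{Main obstacle.} The deepest input is the birational boundedness of Step 2, which requires constructing pluricanonical maps of bounded degree for log canonical pairs of bounded volume with DCC coefficients, via effective non-vanishing and lifting techniques along log canonical centres. Granting it, the limit argument of Step 3 is more routine, but one must carefully exploit the exceptional nature of the negative part of $\Gamma_i$ to identify the volume with that of an effective divisor whose coefficients lie in a DCC set, and invoke both semicontinuity of volumes in families and their monotonicity in the boundary.
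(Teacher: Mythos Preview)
Your Steps 1 and 2 are essentially the paper's strategy: normalise to reduce to log canonical models, then use effective birationality (Theorem~\ref{t-c}) together with Proposition~\ref{p-BB} to put everything into a log birationally bounded family. The serious problem is Step 3.

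The claim that the positive coefficients of $\Gamma_i$ lie in $I\cup\{1\}$ is false. If $E_j$ is $h_i$-exceptional then its coefficient in $\Gamma_i$ is $-a_{E_j}(X_i,B_i)$, and for a log canonical pair this can be \emph{any} real number in $(-\infty,1]$, not just $1$. For instance, blowing up the origin in $(\mathbb{A}^2,\tfrac23 L_1+\tfrac23 L_2)$ gives the exceptional divisor coefficient $\tfrac13$, which need not lie in $I$. So the set of coefficients of $\Gamma_i^+$ does not satisfy the DCC, and your passage to a limit boundary $\Theta$ with DCC coefficients collapses. Relatedly, the final displayed inequality goes the wrong way: if $a_j^i\nearrow a_j$ then monotonicity of volume in the boundary gives $v_i\le \vol(K_{Z_{t_i}}+\sum_j a_j E_j|_{Z_{t_i}})$, which is an upper bound, not the lower bound you need to contradict strict decrease.

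The paper repairs this by working not with the crepant pullback $\Gamma_i$ but with the b-divisor $\mathbf{M}_{B_i}$ (strict transform plus all exceptionals with coefficient $1$), whose coefficients genuinely lie in $I\cup\{1\}$ and for which $\vol(K_{Z_{t_i}}+\mathbf{M}_{B_i,Z_{t_i}})=\vol(K_{X_i}+B_i)$ by Lemma~\ref{l-volumes}(2). One then uses deformation invariance of log plurigenera (Theorem~\ref{t-definv}) to reduce to a \emph{single} fibre $(Z,D)$. But even then, DCC of volumes is not a formal limit argument: different $X_i$ sit over $Z$ via different towers of blow-ups, and pushing $\mathbf{M}_{B_i}$ down to $Z$ can strictly increase the volume. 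The paper handles this with the toroidal machinery of Lemma~\ref{l-stable} and Theorem~\ref{t-voldcc}, constructing a ``reduction'' $(Z',\mathbf{B}')$ of the limiting b-divisor on which one can simultaneously bound the volumes from above and below. This is the substantive content you are missing; the naive limit argument you sketch does not survive the failure of DCC for log discrepancies.
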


If $\dim X=1$ then $X$ is a curve and
\[
\vol(X,K_X+B)=\deg (K_X+B)=2g-2 +\sum b_i,
\] 
where $g$ is the arithmetic genus of $X$ and $B=\sum b_iB_i$.  Thus, the set 
\[
\mathcal V (1, I)=\{2g-2+\sum b_i |b_i\in I \}\cap \mathbb{R} _{>0}
\]
of possible volumes satisfies the DCC.  For example, if $I$ is empty, then
$v(1,\emptyset )=2$ and if $J$ is the set of standard coefficients, then it is well known
that $v(1,J)=1/42$.  Finally, if $I=\{ 0,1 \}$, one sees that $mK_X$ is very ample for all
$m\geq 3$, as an easy consequence of Riemann Roch.

If $\dim X=2$, and $X$ has canonical singularities then the canonical divisor is Cartier
and in particular $K_X^2\in \mathbb{N}$ so that $K_X^2 \geq 1$.  By a result of Bombieri,
it is also known that $mK_X$ is very ample for $m\geq 5$ \cite{Bombieri70},
\cite{Bombieri73} (a similar result also follows in positive characteristic
\cite{Ekedahl88}).  On the other hand $\mathcal V(2,I)$ is hard to compute and there are
no explicit bounds known for $r(2,\emptyset , d)$.

If $\dim X =3$ then there are semi log canonical models with canonical singularities of
arbitrarily high index, therefore there is no integer $r>0$ such that $rK_X$ is very ample
for any 3-dimensional canonical model.  Since $K_X$ is not necessarily Cartier, the volume
$K_X^3$ may not be an integer and in particular it may be smaller than 1.  In fact by
\cite{Fletcher00} a general hypersurface $X$ of degree 46 in weighted projective space
$\proj 4,5,6,7,23.$ has volume $K_X^3=1/420 $ and $mK_X $ is birational for $m=23$ or
$m\geq 27$.  On the other hand, using Reid's Riemann-Roch formula, it is shown in
\cite{CC10}, \cite{CC12} that $K_X^3\geq 1/1680$ and $rK_X$ is birational for $r\geq 61$
for any $3$-dimensional canonical model.

\subsection{Boundedness of canonical models}

In general the problem of determining lower bounds for the volume of $K_X$ and which
multiples $mK_X$ of $K_X$ that are very ample is not easy. The first general result for
canonical models in arbitrary dimension is based on ideas of Tsuji (\cite{Tsuji07},
\cite{HM05b} and \cite{Takayama06}).
\begin{theorem}\label{t-Tsuji} Fix $n\in\mathbb{N}$ and $d>0$.  Then
\begin{enumerate} 
\item The set of canonical volumes $\mathcal W (n)=\{K_X^n \}$ where $X$ is a
$n$-dimensional canonical model, is discrete.  In particular the minimum $w=w(n)$ is
achieved.
\item There exists an integer $k=k(n)>0$ such that if $X$ is an $n$-dimensional canonical
model, then $mK_X$ is birational for any $m\geq k$.
\item There exists an integer $r=r(n,d)>0$ such that if $X$ is an $n$-dimensional
canonical model with $K_X^n=d$, then $rK_X$ is very ample.
\end{enumerate}
\end{theorem}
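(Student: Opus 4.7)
The plan is to prove parts (1), (2), and (3) simultaneously by induction on $n=\dim X$, following Tsuji's approach as developed in \cite{Tsuji07}, \cite{HM05b}, and \cite{Takayama06}. The base case $n=1$ follows from Riemann--Roch, and $n=2$ is Bombieri's theorem, so assume the three statements hold in all dimensions $<n$.

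The heart of the argument is the following effective birationality statement, which implies (2): there exists a constant $M = M(n,\delta)$ such that whenever $X$ is an $n$-dimensional canonical model with $\vol(K_X)\geq\delta>0$, the map $\phi_{|mK_X|}$ is birational for all $m\geq M$. To prove this, pick two very general points $x,y\in X$. Since $K_X$ is ample and its volume is bounded below by $\delta$, a standard Riemann--Roch/concentration argument produces a $\mathbb{Q}$-divisor $D\sim_{\mathbb{Q}}\lambda K_X$, with $\lambda<1$ controlled in terms of $n$ and $\delta$, whose multiplicity at $x$ forces $(X,D)$ to be strictly log canonical there. Tie-breaking with a general member of $|kK_X|$ (using the inductive knowledge in smaller dimension) isolates the non-klt locus at $x$, and an analogous construction at $y$ does the same there. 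The minimal log canonical center through $y$ has dimension $<n$, and on it $(K_X+D)$ restricts to a $\mathbb{Q}$-divisor that, by subadjunction, looks like the log canonical class of a klt pair. The inductive hypothesis for effective birationality on log canonical centers, combined with Nadel vanishing, lets one lift sections from this center to $X$, thereby separating $x$ from $y$; this forces $|mK_X|$ to be birational for $m\geq M(n,\delta)$.

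To obtain (1), and to close the circle that converts $M(n,\delta)$ into a universal constant $k(n)$, one argues by contradiction. Suppose $\mathcal{W}(n)$ is not discrete near $0$; pick a sequence $X_i$ of canonical models with $d_i=K_{X_i}^n\to 0$ (or merely violating the DCC). A general member of $|k(n-1)K_{X_i}|$ passes to a suitable log canonical center, and the resulting restricted volume tends to $0$ as well. But Theorem \ref{t-vol}, applied with the standard coefficient set $J$ to the pair obtained by adjunction on this center, forces these restricted volumes to lie in a DCC set, contradicting convergence to $0$. This yields the discreteness in (1), hence a universal lower bound $w(n)>0$ on volumes of $n$-dimensional canonical models, and then $k(n):=M(n,w(n))$ works for (2).

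Part (3) is then a consequence of boundedness of the birational family and an effective Matsusaka-type result. Once $K_X^n=d$ is fixed and $\phi_{|k(n)K_X|}$ is birational, its image in $\mathbb{P}^N$ lies in a Chow variety of bounded dimension and degree (both bounded in terms of $n,k(n),d$ via Riemann--Roch), so the $X$ in question form a bounded family; an effective very-ampleness statement in a bounded family yields $r(n,d)$. The main obstacle is the circular dependence: effective birationality needs a lower bound on the volume, the lower bound on the volume needs DCC for volumes on log canonical centers in smaller dimension, and that DCC in turn requires a strengthened form of Theorem \ref{t-vol} for pairs, not just for canonical models. Breaking this loop requires running the induction simultaneously for (1), (2), (3) across all dimensions, together with the DCC result for volumes of pairs with standard coefficients, which is the technical core of \cite{HMX14}.
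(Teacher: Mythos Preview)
Your proposal has a genuine gap in the proof of (1), and it misses the key idea that makes Tsuji's argument work without invoking the full DCC machinery for log pairs.

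The paper's approach is as follows. One first proves the weaker claim that there exist constants $A,B>0$ such that $|mK_X|$ is birational whenever $m\geq A(K_X^n)^{-1/n}+B$; this is essentially your effective birationality step $M(n,\delta)$, and your sketch of it via cutting down log canonical centres is fine. The crucial point is what comes next. Fix any $M>0$. If $K_X^n<M$, then with $m_0=\lceil A(K_X^n)^{-1/n}+B\rceil$ the image $Z$ of $\phi_{|m_0K_X|}$ has degree at most $m_0^nK_X^n\leq (A+(B+1)M)^n$, so the canonical models $X$ with $K_X^n<M$ are \emph{birationally bounded}. One now passes to a resolution of the bounding family, stratifies so that the map to the base is smooth, and applies \emph{Siu's deformation invariance of plurigenera}. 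This forces $\vol(K_X)$ to be constant on each irreducible component of the base, so only finitely many volumes occur below $M$. That gives (1) directly, and then (2) follows with $k=\lceil A\,w(n)^{-1/n}+B\rceil$; (3) follows by taking the relative canonical model of the bounded family via \cite{BCHM10}.

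Your argument for (1) instead appeals to Theorem~\ref{t-vol} (DCC of volumes of semi log canonical models) on log canonical centres in lower dimension. This is problematic for two reasons. First, it is circular in the context of this paper: Theorem~\ref{t-vol} is the main theorem whose proof occupies the rest of the article, and Theorem~\ref{t-Tsuji} is presented precisely as the simpler motivating case that can be established \emph{without} that machinery. You acknowledge this circularity in your final paragraph but do not resolve it. Second, even granting Theorem~\ref{t-vol} in dimension $<n$, your sketch is unclear: the $X_i$ are canonical models with no boundary, so there is no given log canonical centre to restrict to, and the assertion that ``the resulting restricted volume tends to $0$ as well'' is neither justified nor obviously true. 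The missing idea is Siu's theorem: it converts birational boundedness directly into finiteness of volumes, with no need for any DCC statement about pairs.
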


Note that it is not the case that the volumes of $d$-dimensional log canonical models is
discrete, in fact by examples of \cite[36]{Kollar08}, they have accumulation points from
below.

\begin{proof}[Sketch of the proof of Theorem \ref{t-Tsuji}] Tsuji's idea is to first prove
the following weaker version of (2):
\begin{claim}\label{c-Tsuji} There exist constants $A$, $B>0$ such that $mK_X$ is birational
for any $m\geq A(K_X^n)^{-1/n}+B$.
\end{claim}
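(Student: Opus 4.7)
The plan is to follow Tsuji's approach: to verify that $|mK_X|$ defines a birational map it suffices to separate a very general pair of points $x, y \in X$, and I will do this by constructing an effective $\mathbb{Q}$-divisor $\Delta \sim_{\mathbb{Q}} \lambda K_X$ with $\lambda = O((K_X^n)^{-1/n})$ such that $\{x\}$ is an isolated non-klt center of $(X, \Delta)$, then applying Nadel vanishing to lift a separating section of $mK_X$ vanishing at $y$ but not at $x$.

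The key initial construction exploits bigness of $K_X$ through asymptotic Riemann--Roch: $h^0(X, m_0 K_X) \sim m_0^n (K_X^n)/n!$, while vanishing to order $k$ at a fixed point $x$ imposes only $\sim k^n/n!$ conditions on global sections. Hence, for any $\epsilon > 0$ and $m_0$ sufficiently large, I can produce a divisor $D \in |m_0 K_X|$ with $\mult_x D \geq m_0(1-\epsilon)(K_X^n)^{1/n}$. Taking $m_0$ of order $n(K_X^n)^{-1/n}$ forces $\mult_x D \geq n+1$, so $c := \lct_x(X, D) \leq n/\mult_x D < 1$, and $(X, cD)$ acquires a non-klt center $V_0 \ni x$. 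Taking $V_0$ minimal and using the standard tie-breaking perturbation (adding a small positive combination of further divisors of high multiplicity at $x$), one arranges that $V_0$ is the unique minimal non-klt center in a neighborhood of $x$, at a numerical cost to $\Delta$ of at most a constant multiple of $(K_X^n)^{-1/n} \cdot K_X$.

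Next comes the inductive descent. If $\dim V_0 > 0$, one must produce a further effective divisor on $V_0$ with high multiplicity at $x$ and extend it to $X$ via Nadel-type lifting, thereby cutting $V_0$ down to a strictly smaller non-klt center $V_1$. Iterating at most $n$ times produces a configuration with $\{x\}$ an isolated minimal non-klt center of some $(X, \Delta)$ with $\Delta \sim_{\mathbb{Q}} \lambda K_X$ and $\lambda \leq A(K_X^n)^{-1/n} + B$ for constants depending only on $n$. Then for any integer $m > \lambda + 1$, the residual $\mathbb{Q}$-divisor $mK_X - (K_X + \Delta) \sim_{\mathbb{Q}} (m-1-\lambda)K_X$ is big, and Nadel vanishing gives a surjection $H^0(X, mK_X) \twoheadrightarrow (mK_X)_x \oplus H^0(X, \mathcal{J}(\Delta) \otimes mK_X)$, which furnishes the required separating section for $y$ general.

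The main difficulty is the inductive descent on $\dim V_i$. Unlike the Angehrn--Siu setting with a genuinely ample line bundle $L$, where $L|_{V_i}$ is still ample with controlled Seshadri data, here $K_X|_{V_i}$ need not even be pseudo-effective, so one cannot naively transplant the Riemann--Roch argument to $V_i$. The workaround, due to Tsuji and sharpened in \cite{HM05b, Takayama06}, is to produce each cutting divisor on $X$ itself using the bigness of $K_X$ there, then to transfer the non-klt condition to $V_i$ via Kawamata's subadjunction and a lifting step, paying a bounded price in numerical class at each of the at most $n$ descent steps. Tracking those losses additively yields the advertised linear form $A(K_X^n)^{-1/n} + B$.
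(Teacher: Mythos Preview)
Your initial step (asymptotic Riemann--Roch plus log canonical threshold to produce a non-klt centre $V \ni x$ with $\Delta \sim_{\mathbb Q} \lambda K_X$ and $\lambda = O((K_X^n)^{-1/n})$) matches the paper's argument exactly, as does the Nadel vanishing endgame.  The gap is in your descent.

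You describe an Angehrn--Siu style iteration ``at most $n$ times,'' with the cutting divisor at each stage ``produced on $X$ itself using the bigness of $K_X$ there'' and then ``transferred to $V_i$ via Kawamata subadjunction.''  This is not how the argument works, and as stated it does not go through: a divisor in $|m_0 K_X|$ with high multiplicity at $x$ gives you no control over its restriction to $V_i$ relative to any positivity on $V_i$, so there is no way to bound the numerical cost of each step.  Subadjunction runs in the opposite direction---it compares $(K_X+\Delta)|_V$ to $K_V$, it does not push singularities from $X$ down to $V$.

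The paper's descent is a \emph{single} step, powered by induction on the dimension.  The two ingredients you are missing are: first, since $x$ is very general and the centres $V$ cover $X$, the centre $V$ is itself of general type, so $K_{V'}$ is big on a resolution $V' \to V$; second, by the inductive hypothesis in dimension $\le n-1$ there is a constant $k' = k(n-1)$, depending only on $n$, such that $|k'K_{V'}|$ is birational.  One then uses $n'+1$ general members of $|k'K_{V'}|$ through $x$ to make $x$ an isolated non-klt centre on $V$.  Kawamata subadjunction enters only to show that $(1+\lambda)K_X|_V - K_V$ is pseudo-effective, so that this divisor on $V$ extends (after Serre vanishing) to a divisor on $X$ numerically equivalent to a bounded multiple of $K_X$.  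Inversion of adjunction then makes $x$ an isolated non-klt centre of the combined pair on $X$.  Without the observation that $V$ is of general type and the inductive bound $k'$, there is no mechanism to control the cost of cutting down, and your sketch does not supply one.
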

To prove the claim, it suffices to show that for very general points $x,y\in X$ there is
an effective $\mathbb{Q}$-divisor $D$ such that
\begin{enumerate}
\item $D\sim _\mathbb{Q} \lambda K_X$ where $\lambda < A(K_X^n)^{-1/n}+B-1$,
\item $\mathcal{J}(X,D)_x=\mathfrak{m} _x$ in a neighbourhood of $x\in X$ and
\item $\mathcal{J}(X,D)\subset\mathfrak{m}_y$.
\end{enumerate}

Therefore $x$ is an isolated point of the cosupport of $\mathcal J (X,D)$ and $y$ is
contained in the cosupport of $\mathcal J (X,D)$.  Applying Nadel vanishing we obtain
$H^1(X,\omega _X^m\otimes \mathcal{J} (X,D))=0$ for any integer
$m\geq A(K_X^n)^{-1/n}+B-1$ and so there is a surjection
\[
\map H^0(X,\omega _X^m).{H^0(X,\omega _X^m\otimes \ring X./\mathcal{J}(X,D))}..
\]
By our assumptions, $\mathcal O_X/\mathcal{J} (X,D)=\mathcal F \oplus \mathcal G$ where
$\Supp(\mathcal F)=x$ and $y\in \Supp(\mathcal G)$.  From the surjection
$\map \mathcal F.{\ring X./\frak m _x}.\cong \mathbb{C}(x)$ it easily follows that there
exists a section of $\omega _X^m$ vanishing at $y$ and not vanishing at $x$. Therefore
$|mK_X|$ induces a birational map.

We now explain how to produce the divisor $D$.  We focus on establishing the condition
$\mathcal J (X,D)_x=\frak m _x$ and we ignore the condition
$\mathcal J (X,D)\subset \frak m_y$ since this is easier.  Fix $0<\epsilon \ll 1$.  Since
\[
h^0(X,\ring X.(tK _X))=\frac{K_X^n}{n!}t^n+o(t^n)
\]
and vanishing at $x$ to order $s$ imposes at most $s^n/n!+o(s^{n})$ conditions, for every
$l\gg 0$ there is a section $D_l\in |lK_X|$ with
\[
\mult_x(D_l)> l((K_X^n)^{1/n}-\epsilon).
\]  
If $D=\lambda D_l/l\sim _\mathbb{Q} \lambda K_X$ where $\lambda =\lct_x(X;D_l/l)$, then
\[
\lambda \leq n/l((K_X^n)^{1/n}-\epsilon)
\] 
so that $\lambda \leq A' (K_X^n)^{-1/n}+B'$ for appropriate constants $A',B'$ depending
only on $n$.  By definition of $D$, we have $\mathcal{J} (X,D)\subset \frak m_x$. Let
$x\in V\subset X$ be an irreducible component of the co-support of $\mathcal{J} (X,D)$. By
standard arguments (see Proposition \ref{p-mincentre}), we may assume that $V$ is the only
such component. If $\dim V =0$, then $V=x$ and we are done, so suppose that $n'=\dim V>0$.

Since $x\in X$ is very general then $V$ is of general type.  Let $\nu\colon\map V'.V.$ be
a log resolution. Then by induction on the dimension, there exists a constant $k'=k(n-1)$
such that $\phi _{k'K_{V'}}\colon\rmap V'.{\pr M.}.$ is birational. Let $n'=\dim V$. Pick
$x'\in V'$ a general point and 
\[
D_{1,V'}=\frac {n'}{n'+1}(H_1+\ldots +H_{n'+1})
\]
where $H_i\in |k'K_{V'}|$ are divisors corresponding to general hyperplanes on
$\mathbb P ^M$ containing $\phi _{k'K_{V'}}(x')$.  Let $D_{1,V}=\nu _*D_{1,V'}$.  It is
easy to see that $x'$ is an isolated non Kawamata log terminal centre of $(V,D_{1,V})$
(with a unique non Kawamata log terminal place).

Assume for simplicity that $V$ is normal.  By Kawamata subadjunction, it follows that
\[
(1+\lambda )K_X|_V-K_V \sim_{\mathbb{R}} (K_X+D)|_V-K_V
\]
is pseudo-effective.  Since $K_X$ is ample, for any $\delta >0$, we may assume that there
is an effective $\mathbb{R}$-divisor
\[
D^*_{1,V}\sim _\mathbb{R} (1+\lambda)( \frac {n'k}{n'+1}+\delta)K_X|_V
\]
such that $x'$ is an isolated non Kawamata log terminal centre of $(V,D^*_{1,V})$ (with a
unique non Kawamata log terminal place).  By Serre vanishing there is a divisor
\[
D_1\sim _\mathbb{R} (1+\lambda)( \frac {n'k}{n'+1}+\delta) K_X
\] 
such that $D_1|_V=D_{1,V}$.  By inversion of adjunction $x'$ is a minimal non Kawamata log
terminal centre of $ (X,D+D_1)$.  After perturbing $D'=D+D_1$ we may assume that
$\mathcal{J}(X,D')=\frak m_{x'}$ in a neighbourhood of $x'\in V \subset X$. Note that there
exist constants $A'',B''>0$ such that
\[
\lambda + (1+\lambda)( \frac {n'k}{n'+1}+\delta)\leq A''(K_X^n)^{-1/n}+B''.
\]

Finally we sketch Tsuji's argument showing that Corollary \ref{c-Tsuji} implies Theorem
\ref{t-Tsuji}.  Let $m_0=\lceil A(K_X^n)^{-1/n}+B \rceil$ and $Z$ be the image of $X$ via
$|m_0K_X|$. Then $Z$ is birational to $X$.  Fix any $M>0$. If $K_X^n\geq M$ then (2) of
Theorem \ref{t-Tsuji} holds with $k=\lceil A(M)^{-1/n}+B \rceil$.  Therefore suppose that
$K_X^n< M$. In this case we have
\begin{align*} 
\deg (Z)&\leq m_0^nK_X^n\\
        &<(A(K_X^n)^{-1/n}+B+1)^nK_X^n\\
        &\leq (A+(B+1)M)^n.
\end{align*} 

Therefore $X$ is birationally bounded.  More precisely, using the corresponding Chow
variety, we obtain a projective morphism of quasi-projective varieties
$\map\mathcal{Z}.T.$ such that for any $X$ as above there exists a point $t\in T$ and a
birational map $\rmap X.\mathcal{Z}_t.$.  Let $\map \mathcal{Z}'.\mathcal{Z}.$ be a
resolution. After decomposing $T$ (and $\mathcal{Z}$) into a disjoint union of locally
closed subsets, we may assume that $\map \mathcal{Z}'.T.$ is a smooth morphism.  We may
also assume that the subset of points $t\in T$ such that $\mathcal{Z}'_t$ is a variety of
general type, is dense in $T$.  By Siu's theorem on the deformation invariance of
plurigenera, we may then assume that all fibres $\mathcal{Z}'_t$ are varieties of general
type and that there are finitely many possible volumes
\[
K_X^n=\vol (\mathcal{Z}_t,K_{\mathcal{Z}_t})=\lim \frac{h^0(\mathcal{Z}_t,mK_{\mathcal{Z}_t})}{m^n/n!}.
\]
This implies (1) of Theorem \ref{t-Tsuji}.  It is also clear that (2) of Theorem
\ref{t-Tsuji} holds with $k=\lceil A(w(n))^{-1/n}+B \rceil$.  To prove (3), assume that
$d< M$.  After throwing away finitely many components of $T$, we may assume that
$\vol (\mathcal{Z}_t,K_{\mathcal{Z} _t})=d$ for all $t\in T$.  Let $\map \mathcal{X}.T.$
be the relative canonical model of $\mathcal{Z}/T$ which exists by \cite{BCHM10}.  Since
$K_{\mathcal{X}}$ is relatively ample, it follows that there is an integer $r$ such that
$rK_{\mathcal{X}}$ is relatively very ample and hence $rK_{\mathcal{Z} _t}$ is very ample
for all $t\in T$. Therefore (3) Theorem \ref{t-Tsuji} also holds. \end{proof} 

It is natural to try and generalize the above argument to the case of log pairs. Not
surprisingly there are many technical difficulties.  The first obvious difficulty is that
it is no longer sufficient to prove the birational boundedness of varieties but we need to
prove some version of birational boundedness for log pairs, cf. \eqref{d-bounded}.  

The basic structure of the proofs of Theorems \ref{t-bound}, \ref{tcbound} and \ref{t-vol}
is similar to that of Theorem \ref{t-Tsuji}. The proof can be divided into three steps
(see \cite{HMX10}, \cite{HMX12}, \cite{HMX14}).

In the first step, we want to show that if we have a class $\mathfrak{D}$ of
$n$-dimensional log canonical pairs which is birationally bounded and with all the
coefficients belonging to a fixed DCC set $I$, then the set
\[
\{\, \vol(X,K_X+B) \,|\, (X,B)\in \mathfrak{D} \,\}
\]
also satisfies the DCC.  Under suitable smoothness assumptions, we obtain a version of
invariance of plurigenera for pairs.  Using this, we can easily reduce to the case that
$T$ is a point in the definition of a log birationally bounded family, that is, we can
assume all pairs are birational to a fixed pair.  Then there is a lengthy combinatorial
argument, mainly using toroidal geometry calculations, to finish the argument.

In the second step, we want to prove that all log general type pairs in $\mathfrak{D}$
with volume bounded from above form a log birationally bounded family.  This step is
similar to the proof of Theorem \ref{t-Tsuji} (unluckily many difficulties arise due to
the presence of the boundary).  This is done in \cite{HMX14} via a complicated induction
which relies on the ACC for log canonical thresholds and other results.  We adopt a more
direct approach here, where we first prove the result for coefficient sets $I$ of the form
\[
\{\, \frac ip \,|\, 0\leq i\leq p \,\}
\]
and then deduce the general case.

In the final step we deduce boundedness from log birational boundedness.  This is a direct
consequence of the Abundance Conjecture.  In our situation, we are able to use a
deformation invariance of plurigenera for pairs (proved with analytic methods by
Berndtsson and P\u aun) to establish the required special case of the abundance
conjecture.

section{Preliminaries}

\subsection{Notation and conventions}\label{ss-nc}

We work over the field of complex numbers $\mathbb{C}$.  A {\bf pair} $(X,B)$ is given by
a normal variety $X$ and an effective $\mathbb{R}$-divisor $B=\sum _{i=1}^kb_iB_i$ such
that $K_X+B$ is $\mathbb{R}$-Cartier.  We denote the coefficients of $B$ by
$\coeff (B)=\{b_1,\ldots , b_k\}$.  Let $q\in \mathbb{N}$,
$I_0=\{\, \frac jq \,|\, 1\leq j\leq q \,\}$.  We say that $D(I_0)$ is a {\bf
  hyperstandard set of coefficients}.  Observe that for any finite set of rational numbers
$J_0\subset [0,1]$, we can find $q\in \mathbb{N}$ such that $J_0\subset I_0$.

We let $\lfloor B\rfloor=\sum \lfloor b_i\rfloor B_i$ where $\lfloor b\rfloor$ is the
greatest integer $\leq b$ and $\{ B\}=B-\lfloor B\rfloor$.  The {\bf support} of $B$ is
given by $\Supp(B)=\cup _{b_i\neq 0}B_i$.  The {\bf strata} of $(X, B)$ are the
irreducible components of intersections
\[
B_I=\cap _{j\in I} B_j=B_{i_1}\cap \ldots \cap B_{i_r},
\]
where $I=\{\, \llist i.r.\,\}$ is a subset of the non-zero coefficients, including the
empty intersection $X=B_\emptyset$.  If $B'=\sum b_i'B_i$ is another $\mathbb{R}$-divisor,
then $B\wedge B'=\sum (b_i\wedge b'_i)B_i$ and $B\vee B'=\sum (b_i\vee b'_i)B_i$ where
$b_i\wedge b'_i=\min \{ b_i,b'_i\}$ and $b_i\vee b'_i=\max \{ b_i,b'_i\}$.

We say the pair $(X,B)$ is a {\bf toroidal pair} if the inclusion $\emb U.X.$ of the
complement $U$ of the support of $B$ is toroidal, so that locally, in the analytic
topology, the inclusion is isomorphic to the inclusion of the open torus inside a toric
variety, see \cite{KKMS73}.

For any proper birational morphism $\nu\colon\map X'.X.$, we pick a canonical divisor
$K_{X'}$ such that $\nu _* K_{X'}=K_X$ and we write 
\[
K_{X'}+B'=\nu ^*(K_X+B)+\sum a_{E_i}E_i
\]
where $B'$ is the strict transform of $B$.  The numbers $a_{E_i}=a_{E_i}(X,B)$ are the
{\bf discrepancies} of $E_i$ with respect to $(X,B)$, the \textbf{discrepancy} of $(X,B)$
is $\inf \{a_{E}(X,B)\}$ where $E$ runs over all divisors over $X$ and the {\bf total
  discrepancy} $a(X,B)$ of $(X,B)$ is the minimum of the discrepancy and $\coeff)(-B)$.
We say that $(X,B)$ is {\bf Kawamata log terminal} (resp. {\bf log canonical}, {\bf
  terminal}) if $a(X,B)>-1$ (resp. $a(X,B)\geq -1$, $a_E(X,B)>0$ for any divisor $E$
exceptional over $X$).  Note that to check if a pair is either Kawamata log terminal or
log canonical it suffices to check what happens on a single log resolution, that is, on a
proper birational morphism $\nu\colon\map X'.X.$ such that the exceptional locus is a
divisor and $\nu ^{-1}_* B+\Exc(\nu )$ has simple normal crossings.  A divisor $E$ over
$X$ is a {\bf non Kawamata log terminal place} of $(X,B)$ if $a_E(X,B)\leq -1$. The image
of a non Kawamata log terminal place $E$ in $X$ is a {\bf non-Kawamata log terminal
  centre}.  {\bf Non log canonical places} and {\bf centres} are defined similarly by
requiring $a_E(X,B)<-1$.  A pair $(X,B)$ is {\bf divisorially log terminal} if it is log
canonical and there is an open subset $U\subset X$ containing the generic points of all
non Kawamata log terminal centres such that $(U,B|_U)$ has simple normal crossings.  In
this case, by a result of Szab{\'o}, it is known that there exists a log resolution of
$(X,B)$ which is an isomorphism over $U$.

If $(X,B)$ is a log canonical pair and $D\geq 0$ is an effective $\mathbb{R}$ divisor,
then we define the {\bf log canonical threshold} of $(X,B)$ with respect to $D$ by
\[
\lct (X,B;D)=\sup \{\, t\geq 0 \,|\, \text{$(X,B+tD)$ is log canonical} \,\}
\]
For any closed point $x\in X$, $\lct _x(X,B;D)$ will denote the log canonical threshold
computed on a sufficiently small open subset of $x\in X$.  In particular,
$(X,B+\lambda D)$ is log canonical in a neighbourhood of $x\in X$ and the non-Kawamata log
terminal locus of $(X,B+\lambda D)$ contains $x$ where $\lambda=\lct _x(X,B;D)$.

Let $X$ be a normal variety and consider the set of all proper birational morphisms
$f\colon\map Y.X.$ where $Y$ is normal.  We have natural maps
$f_*\colon\map \Div(Y).\Div(X).$.  The space {\bf b-divisors} is
\[
{\bf Div}(X)=\lim _{\{Y\to X\}}\Div(Y).
\]
Note that $f_*$ induces an isomorphism ${\bf Div}(Y)\cong {\bf Div}(X)$ and that an
element $\mathbf B\in {\bf Div}(X)$ is specified by the corresponding traces
$\mathbf B _Y$ of $\mathbf B$ on each birational model $\map Y.X.$.  If $E$ is a divisor
on $Y$, then we let $\mathbf B (E)=\mult_E(\mathbf B _Y)$.  Given a log pair $(X,B)$ and a
proper birational morphism $f\colon\map X'.X.$, we may write $K_{X'}+B_{X'}=f^*(K_X+B)$.
We define the b-divisors $\mathbf{L} _B$ and $\mathbf{M} _B$ as follows
\[
\mathbf{M} _{B,X'}=f ^{-1}_*B+\Exc(f)\qquad \text{and}\qquad \mathbf{L}_{B,X'}=B_{X'}\vee 0.
\]

A {\bf semi log canonical pair} (SLC pair) $(X,B)$ is given by an $S_2$ variety whose
singularities in codimension $1$ are nodes and an effective $\mathbb{R}$-divisor $B$ none
of whose components are contained in the singular locus of $X$ such that if
$\nu \colon\map X^\nu.X.$ is the normalisation and $K_{X^\nu}+B^\nu=\pi ^*(K_X+B)$, then
each component of $(X^\nu,B^{\nu})$ is log canonical.  A {\bf semi log canonical model}
(SLC model) is a projective SLC pair $(X,B)$ such that $K_X+B$ is ample.

If $X$ is a smooth variety and $D$ is an effective $\mathbb{R}$-divisor on $X$, then {\bf
  the multiplier ideal sheaf} is defined by
\[
\mathcal{J}(X,D)=\mu _*(K_{X'/X}-\lfloor \mu ^* D \rfloor )\subset \ring X.
\]
where $\mu \colon\map X'.X.$ is a log resolution of $(X,D)$.  It is known that the
definition does not depend on the choice of a log resolution and
$\mathcal{J}(X,D)=\ring X.$ if and only if $(X,D)$ is Kawamata log terminal and in fact
the support of $\ring X./\mathcal J (X,D)$ (that is, the {\bf co-support} of
$\mathcal J(X,D)$) is the union of all non Kawamata log terminal centres of $(X,D)$.  Note
that
\[
\lct (X,B;D)=\sup \{\, t\geq 0 \,|\, \mathcal{J}(X,B+tD)=\ring X. \,\}.
\]
We refer the reader to \cite{Lazarsfeld04b} for a comprehensive treatment of multiplier
ideal sheaves and their properties.

Let $\pi \colon\map X.U.$ be a morphism, then $\pi$ is a {\bf contraction morphism} if and
only if $\pi _*\ring X.=\ring U.$.  If $f\colon\map X.U.$ is a morphism and $(X,B)$ is a
pair, then we say that $(X,B)$ is {\bf log smooth over} $U$ if $(X,B)$ has simple normal
crossings and every stratum of $(X,\Supp(B))$ (including $X$) is smooth over $U$.

A {\bf birational contraction} $f\colon\rmap X.Y.$ is a proper birational map of normal
varieties such that $f^{-1}$ has no exceptional divisors.  If $p\colon\map W.X.$, and
$q\colon\map W.Y.$ is a common resolution then $f$ is a birational contraction if and only
if every $p$-exceptional divisor is $q$-exceptional.  If $D$ is an $\mathbb{R}$-Cartier
divisor on $X$ such that $f_*D$ is $\mathbb{R}$-Cartier on $Y$ then $f$ is $D$-{\bf
  non-positive} (resp. $D$-{\bf negative}) if $p^*D-q^*(f_*D)=E$ is effective (resp. is
effective and its support contains the strict transform of the $f$ exceptional divisors).
If $\map X.U.$ and $\map Y.U.$ are projective morphisms, $f\colon\rmap X.Y.$ a birational
contraction over $U$ and $(X,B)$ is a log canonical pair (resp. a divisorially log
terminal $\mathbb Q$-factorial pair) such that $f$ is $(K_X+B)$ non-positive
(resp. $(K_X+B)$-negative) and $K_Y+f_*B$ is nef over $U$ (resp. $K_Y+f_*B$ is nef over
$U$ and $Y$ is $\mathbb{Q}$-factorial), then $f$ is a {\bf weak log canonical model}
(resp. a {\bf minimal model}) of $K_X+B$ over $U$.  If $f\colon\rmap X.Y.$ is a minimal
model of $K_X+B$ such that $K_Y+f_*B$ is semi-ample over $U$, then we say that $f $ is a
{\bf good minimal model} of $K_X+B$ over $U$.  Recall that if $\pi\colon\map X.U.$ is a
projective morphism and $D$ is a $\mathbb{R}$-Cartier divisor on $X$, then $D$ is {\bf
  semi-ample over $U$} if and only if there exists a projective morphism
$g\colon\map X.W.$ over $U$ and an $\mathbb{R}$-divisor $A$ on $W$ which is ample over $U$
such that $g^*A\sim _\mathbb{R} D$.

If $D$ is an $\mathbb R$-divisor on a normal projective variety $X$, then $\phi _D$
denotes the rational map induced by the linear series $|\lfloor D \rfloor |$ and
\[
H^0(X,\ring X.(D))=H^0(X,\ring X.(\lfloor D \rfloor )).
\] 
If $\phi_{D}$ induces a birational map, then we say that $|D|$ is birational.

\subsection{Volumes}

If $X$ is a normal projective variety, $D$ is an $\mathbb{R}$-divisor and $n=\dim X$, then
we define the {\bf volume} of $D$ by 
\[  
\vol(X,D)=\lim \frac {n!h ^0(X,mD)}{m^n}.
\] 
Note that if $D$ is nef, then $\vol (X,D)=D^n$. By definition $D$ is {\bf big} if
$\vol(X,D)>0$.  It is well known that if $D$ is big then $D\sim _\mathbb{R} A+E$ where
$E\geq 0$ and $A$ is ample.  Note that the volume only depends on $[D]\in N^1(X)$, so that
if $D\equiv D'$, then $\vol(X,D)=\vol(X,D')$.  The induced function
$\vol\colon\map N^1(X).\mathbb{R}.$ is continuous \cite[2.2.45]{Lazarsfeld04a}.
\begin{lemma}\label{l-volumes} Let $f\colon\map X.W.$ and $g\colon\map Y.X.$ be birational
morphisms of normal projective varieties and $D$ be an $\mathbb R$-divisor on $X$.  Then
\begin{enumerate}
\item $\vol (W,f_*D)\geq \vol (X,D)$.
\item If $D$ is $\mathbb R$-Cartier and $G$ is an $\mathbb R$-divisor on $Y$ such that
$G-g^*D\geq 0$ is effective and $g$-exceptional, then $\vol(Y,G)=\vol(X,D)$. In particular
if $(X,B)$ is a projective log canonical pair and $f\colon\map Y.X.$ a birational
morphism, then 
\[
\vol(X,K_X+B)=\vol (Y,K_Y+\mathbf{L} _{B,Y})=\vol (Y,K_Y+\mathbf{M} _{B,Y}).
\]
\item If $D\geq 0$, $(W,f_* D)$ has simple normal crossings, and 
$L=\mathbf{L} _{f_*D,X}$, then 
\[
\vol (X,K_X+D)=\vol (X,K_X+D\wedge L).
\]
\item If $(X,B)$ is a log canonical pair and $\rmap X.X'.$ is a birational contraction of
normal projective varieties, then
\[
\vol (X',K_{X'}+\mathbf{M}_{B,X'})\geq \vol (X,K_X+B).
\] 
If moreover $\map X.W.$ and $\map X'.W.$ are morphisms and the centre of every divisor
in the support of $B\wedge \mathbf{L} _{f_*B,X}$ is a divisor on $X'$, then we have equality
\[
\vol (X',K_{X'}+\mathbf{M}_{B,X'})=\vol(X,K_X+B).
\]
\end{enumerate}
\end{lemma}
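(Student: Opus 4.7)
My plan is to prove the four parts in sequence, using two recurring tools: (a) for a birational morphism $f\colon X \to W$ of normal varieties and an $\mathbb{R}$-divisor $D$, the sheaf $f_*\mathcal{O}_X(\lfloor mD \rfloor)$ embeds into $\mathcal{O}_W(\lfloor m f_*D \rfloor)$ via restriction to the open locus where $f$ is an isomorphism; and (b) for a birational morphism $g\colon Y \to X$ and an effective $g$-exceptional divisor $E$, one has $g_*\mathcal{O}_Y(E) = \mathcal{O}_X$, which combined with the projection formula recovers sections downstairs.

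For (1), tool (a) gives $h^0(X, mD) \leq h^0(W, mf_*D)$ for every $m$; dividing by $m^n/n!$ and letting $m \to \infty$ yields the volume inequality. For (2), the assumption $G - g^*D \geq 0$ is effective and $g$-exceptional, combined with tool (b) and the projection formula applied to $G = g^*D + (G - g^*D)$, gives $g_*\mathcal{O}_Y(\lfloor mG \rfloor) = \mathcal{O}_X(\lfloor mD \rfloor)$, hence $h^0(Y, mG) = h^0(X, mD)$ and the volumes agree. For the in-particular statement, I would verify that both $\mathbf{L}_{B,Y} - B_Y$ and $\mathbf{M}_{B,Y} - B_Y$ are effective and $g$-exceptional: the former because $(B_Y \vee 0) - B_Y$ is supported on $g$-exceptional divisors where $B_Y$ is negative (non-exceptional coefficients of $B_Y$ are nonnegative); the latter because on non-exceptional divisors both agree with the strict transform, while on $g$-exceptional divisors the coefficient is $1 - (-a_E(X,B)) = 1 + a_E(X,B) \geq 0$ by log canonicity.

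For (3), the inequality $\vol(X, K_X + D \wedge L) \leq \vol(X, K_X + D)$ is immediate from monotonicity since $D - D \wedge L \geq 0$. For the reverse, I write $K_X + D = f^*(K_W + f_*D) + (D - (f_*D)_X)$ where $K_X + (f_*D)_X = f^*(K_W + f_*D)$. Since $L = (f_*D)_X \vee 0$ and both $D$ and $(f_*D)_X$ agree on non-$f$-exceptional divisors, the excess $D - D \wedge L$ is effective and $f$-exceptional. I would then apply the projection formula via $f$ to show that sections of $\mathcal{O}_X(\lfloor m(K_X + D)\rfloor)$ coincide with those of $\mathcal{O}_X(\lfloor m(K_X + D \wedge L)\rfloor)$; the simple normal crossings hypothesis on $(W, f_*D)$ ensures well-behaved log pullback, and the contribution of the excess is killed exactly because it lies above the log-canonical cap $L$ on $f$-exceptional divisors. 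This reverse inclusion is the main technical step of the lemma.

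For (4), I take a common log resolution $p\colon W \to X$, $q\colon W \to X'$ with $(W, \mathbf{M}_{B, W})$ simple normal crossings and resolving the indeterminacies of $f$. Since $f$ is a birational contraction, no divisor of $X'$ is $f^{-1}$-exceptional, so $q_* \mathbf{M}_{B, W} = q_*(p^{-1}_* B) + q_*(\Exc(p)) = f_*B + 0 = \mathbf{M}_{B, X'}$ and thus $q_*(K_W + \mathbf{M}_{B, W}) = K_{X'} + \mathbf{M}_{B, X'}$. By (2) applied to $p$, $\vol(X, K_X + B) = \vol(W, K_W + \mathbf{M}_{B, W})$, and by (1) applied to $q$, $\vol(X', K_{X'} + \mathbf{M}_{B, X'}) \geq \vol(W, K_W + \mathbf{M}_{B, W})$, giving the desired inequality. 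For the equality under the centre hypothesis, the condition prevents $q$ from contracting any component of $B$ contributing below the log-canonical cap $\mathbf{L}_{f_*B, X}$, so applying (3) to $q$ with $D = \mathbf{L}_{B, W}$ upgrades the inequality coming from (1) to an equality along the chain.
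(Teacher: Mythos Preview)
Your arguments for (1), (2), and the inequality in (4) are correct and match the paper's proof almost verbatim.

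For (3), you correctly observe that $D-D\wedge L$ is effective and $f$-exceptional, but this alone does not give $H^0(m(K_X+D))=H^0(m(K_X+D\wedge L))$; removing an effective exceptional divisor from $D$ can in general shrink the space of sections. The paper's proof supplies the missing mechanism: using (1) one has $H^0(X,m(K_X+D))\subset f^*H^0(W,m(K_W+f_*D))$, and since $K_X+L=f^*(K_W+f_*D)+E$ with $E\ge 0$ exceptional, the right-hand side equals $H^0(X,m(K_X+L))$. Hence every section of $m(K_X+D)$ already lies in $H^0(m(K_X+L))$, and intersecting with $H^0(m(K_X+D))$ forces it into $H^0(m(K_X+D\wedge L))$. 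Your phrase ``the contribution of the excess is killed exactly because it lies above the log-canonical cap $L$'' is pointing at this, but the actual argument passes through $W$ rather than invoking the projection formula abstractly.

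For the equality in (4) there is a genuine gap. You overload the letter $W$ for your common resolution, whereas in the lemma $W$ is the common \emph{target} of the morphisms $X\to W$ and $X'\to W$. You then propose to ``apply (3) to $q$'', i.e.\ to the map from the resolution down to $X'$; but (3) requires the pushforward pair on the target to be simple normal crossings, and $X'$ carries no such hypothesis. The paper's proof works differently: it takes a common resolution $X''$ and applies (2) and (3) to the compositions $X''\to X\to W$ and $X''\to X'\to W$, using the SNC structure on $(W,f_*B)$. This expresses both $\vol(X,K_X+B)$ and $\vol(X',K_{X'}+\mathbf{M}_{B,X'})$ as $\vol(X'',K_{X''}+\mathbf{M}\wedge\mathbf{L})$ for suitable $\mathbf{M},\mathbf{L}$, and the centre hypothesis is used precisely to check that the two expressions $\mathbf{M}_{B,X''}\wedge\mathbf{L}_{f_*B,X''}$ and $\mathbf{M}_{B',X''}\wedge\mathbf{L}_{f'_*B',X''}$ coincide (where $B'=\mathbf{M}_{B,X'}$). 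Your sketch does not set up or verify this comparison.
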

\begin{proof} If $H\sim mD$, then $f_*H\sim mf_*D$ and so
$h^0(X,\ring X.(mD))\leq h^0(W,\ring W.(mf_*D))$ and (1) follows easily.

(2) follows since $H^0(X,\ring X.(mD))\cong H^0(Y,\ring Y.(mG))$.

To see (3), notice that the inclusion 
\[
H^0(X,\ring X.(m(K_X+D)))\supset H^0(X,\ring X.(m(K_X+D\wedge L)))
\] 
is clear.  We have
\[
K_X+L=f^*(K_W+f_*D)+E,
\] 
where $E\geq 0$ and $L\wedge E=0$.  Now observe that
\begin{align*} 
H^0(X,\ring X.(m(K_X+D))) &\subset f^*H^0(W,\ring W.(m(K_W+f_*D)))\\
                          &=H^0\big(X,\ring X.(m(K_X+L))\big),
\end{align*} 
where we have already demonstrated the inclusion holds and the equality follows as
$E\geq 0$ is exceptional.  But then every section of $H^0(X,\ring X.(m(K_X+D)))$ vanishes
along $mD-mD\wedge mL$ and (3) follows.

To see (4), let $\map X''.X.$ be a resolution of the indeterminacies of $\rmap X.X'.$ so
that $\map X''.X'.$ is also a morphism of normal projective varieties.  Then by (2) and
(1), it follows that
\[
\vol (X,K_X+B)=\vol (X'',K_{X''}+\mathbf{M} _{B,X''})\leq \vol (X',K_{X'}+\mathbf{M} _{B,X'}).
\]
Suppose now that the centre of every divisor in the support of
$B\wedge \mathbf{L} _{f_*B,X}$ is a divisor on $X'$ and let $B'=\mathbf{M}_{B,X'}$.  It is
easy to see that
\[
\mathbf{M} _{B',X''}\wedge \mathbf{L}_{f'_*B',X''}=\mathbf{M} _{B,X''}\wedge \mathbf{L}_{f_*B,X''}
\]
and so by (2) and (3) we have
\begin{align*}
\vol (X,K_X+B)&=\vol (X'',K_{X''}+\mathbf{M}_{B,X''}\wedge \mathbf{L}_{f_*B,X''})\\
              &=\vol (X'',K_{X''}+\mathbf{M}_{B',X''}\wedge \mathbf{L}_{f'_*B',X''})\\
              &=\vol (X',K_{X'}+B').\qedhere
\end{align*}
\end{proof}

\subsection{Non Kawamata log terminal centres}

Here we collect several useful facts about non Kawamata log terminal centres.
\begin{proposition}\label{p-mincentre} Let $(X,B)$ be a log canonical pair and $(X,B_0)$ 
a Kawamata log terminal pair. 
\begin{enumerate}
\item If $W_1$ and $W_2$ are non Kawamata log terminal centres of $(X,B)$ and $W$ is an
irreducible component of $W_1\cap W_2$, then $W$ is a non Kawamata log terminal centre of
$(X,B)$.  In particular if $x\in X$ is a point such that $(X,B)$ is not Kawamata log
terminal in any neighbourhood of $x\in X$, then there is a minimal non Kawamata log
terminal centre $W$ of $(X,B)$ containing $x$.
\item Every minimal non Kawamata log terminal centre $W$ of $(X,B)$ is normal.
\item If $W$ is a minimal non Kawamata log terminal centre of $(X,B)$, then there exists a
divisor $B'\geq 0$ such that for any $0<t<1$, $W$ is the only non Kawamata log terminal
centre of $(X,tB+(1-t)B')$ and there is a unique non Kawamata log terminal place $E$ of
$(X,tB+(1-t)B')$.
\end{enumerate}
\end{proposition}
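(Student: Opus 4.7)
The plan is to prove the three parts in turn, working with a common log resolution of $(X, B + B_0)$ and exploiting the Kawamata log terminal pair $(X, B_0)$ as a source of perturbation.

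For (1), I would fix a log resolution $\pi \colon Y \to X$ of $(X, B + B_0)$ and write $K_Y + \Gamma = \pi^*(K_X + B)$. Since $(X, B)$ is log canonical, $\lfloor \Gamma \rfloor$ is a reduced simple normal crossings divisor whose strata project onto exactly the non Kawamata log terminal centres of $(X, B)$. By further blowing up centres inside $Y$ if needed, one may arrange that $W_1$ and $W_2$ are each the image of some stratum $S_i$; then every irreducible component of $W_1 \cap W_2$ is dominated by a component of $S_1 \cap S_2$, which is again a stratum of $\lfloor \Gamma \rfloor$ and so projects to a non Kawamata log terminal centre. For the existence of a minimal centre through $x$, the set of non Kawamata log terminal centres through $x$ is nonempty by hypothesis and finite (being images of finitely many strata on a fixed log resolution), so a minimal element exists by descent on dimension.

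For (3), the plan is a standard tie-breaking argument. Fix one non Kawamata log terminal place $E$ of $(X, B)$ with $\pi(E) = W$. I aim to construct an effective $\mathbb{R}$-divisor $B'$ such that $(X, B')$ is log canonical, has $E$ as its only non Kawamata log terminal place, and $W$ as its only non Kawamata log terminal centre. I would build $B'$ in two steps: first mix in a small amount of $B_0$, which strictly raises every discrepancy of $(X, B)$ toward $0$ and removes every non Kawamata log terminal place; then add a small effective $\mathbb{R}$-divisor vanishing along $W$ to sufficient order to bring $a_E$ back down to exactly $-1$, the existence of which is supplied by Kodaira's lemma applied to an ample divisor on $X$ together with the minimality of $W$ (which ensures no competing centre is created by the extra divisor). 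With $B'$ in hand, for any $t \in (0, 1)$ and any prime divisor $F \neq E$ over $X$ one has
\[
a_F(X, tB + (1-t)B') = t \, a_F(X, B) + (1-t) \, a_F(X, B') > -1,
\]
since at least one of the two summands on the right is strictly greater than $-1$; meanwhile $a_E = -1$ for both $(X, B)$ and $(X, B')$. This yields the statement.

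For (2), I would apply (3) to reduce to the situation of a pair with $W$ the unique non Kawamata log terminal centre and $E$ the unique non Kawamata log terminal place. On the log resolution, the divisor $E$ is a divisorially log terminal place of the associated sub-pair and hence $E$ is normal. The induced morphism $E \to W$ has connected fibres over a neighbourhood of $W$, by the Koll\'ar--Shokurov connectedness theorem combined with the uniqueness of $E$. Taking Stein factorization $E \to W^\nu \to W$ and using normality of $E$, the finite map $W^\nu \to W$ must be an isomorphism, so $W$ is normal. The main obstacle is the tie-breaking step in (3): without the auxiliary Kawamata log terminal $(X, B_0)$ there is no room to destroy unwanted non Kawamata log terminal places while preserving $E$ and keeping the pair effective.
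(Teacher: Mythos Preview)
The paper does not argue this proposition itself but defers to \cite{Kawamata97} for (1)--(2) and to \cite[8.7.1]{Kollar07} for (3). Your outlines for (2) and (3) are essentially the standard arguments found there: tie-breaking using the auxiliary Kawamata log terminal pair $(X,B_0)$ together with a general member of an ample system through $W$ for (3), then Koll\'ar--Shokurov connectedness plus Stein factorisation for (2). The convexity step in (3) is clean; the construction of $B'$ is more delicate than you indicate (one must check that for a small perturbation and general $D$ the place computing the new threshold is one of the \emph{old} places of $(X,B)$ over $W$, and that generically exactly one of them wins), but this is the standard tie-breaking and your sketch is adequate as a plan.

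Part (1), however, has a genuine gap. Having chosen strata $S_1,S_2$ of $\lfloor\Gamma\rfloor$ mapping onto $W_1,W_2$, you assert that every component of $W_1\cap W_2$ is dominated by a component of $S_1\cap S_2$. This fails already for $X=\mathbb{A}^2$ with $B=L_1+L_2$ two lines through the origin: on the blow-up of the origin the strict transforms $\tilde L_1,\tilde L_2$ are the unique strata surjecting onto $L_1,L_2$, yet $\tilde L_1\cap\tilde L_2=\emptyset$. The origin is still a non Kawamata log terminal centre, but as the image of the exceptional divisor, not of $S_1\cap S_2$. In general, over the generic point $\eta$ of a component $W$ of $W_1\cap W_2$ the fibre $\pi^{-1}(\eta)$ meets both $S_1$ and $S_2$, but nothing in the simple normal crossings combinatorics forces these to lie in the same connected component of $\lfloor\Gamma\rfloor\cap\pi^{-1}(\eta)$. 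Supplying that connectedness is precisely the role of the Koll\'ar--Shokurov theorem (applied, say, after a small twist by $B_0$ to make the relevant log divisor relatively big), the very ingredient you correctly invoke in (2). Kawamata's original proof proceeds instead via cutting by general hyperplanes and a vanishing argument, but either way the intersection property in (1) is not a formal consequence of the stratification on a single log resolution.
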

\begin{proof} For (1-2) see \cite{Kawamata97}. (3) follows from
\cite[8.7.1]{Kollar07}. \end{proof}

\begin{lemma}\label{l-lcc} Let $(X,B)$ be an $n$-dimensional projective log pair and $D$
a big divisor on $X$ such that $\vol (X,D)> (2n)^n$.  Then there exists a family
$\map V.T.$ of subvarieties of $X$ such that if $x$, $y$ are two general points of $X$,
then, possibly switching $x$ and $y$, we may find a divisor $0\leq D_t\sim _\mathbb{R} D$
such that $(X,B+D_t)$ is not Kawamata log terminal at both $x$ and $y$, $(X,B+D_t)$ is log
canonical at $x$ and there is a unique non Kawamata log terminal place of $(X,B+D_t)$ with
centre $V_t$ containing $x$.
\end{lemma}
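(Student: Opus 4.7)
The plan is to combine a Riemann--Roch/dimension count (using the sharp bound $\vol(X,D)>(2n)^n$) with a log canonical threshold adjustment and the tie-breaking of Proposition \ref{p-mincentre}(3). First, asymptotic Riemann--Roch gives $h^0(X,\sO_X(mD))>(2n)^n m^n/n!+o(m^n)$ for $m\gg0$, while vanishing at two points $x,y$ to order $s$ imposes at most $2\binom{s+n-1}{n}$ linear conditions. Taking $s$ slightly larger than $nm$, a direct asymptotic comparison (using $2n^n<(2n)^n$ for $n\geq 2$; the case $n=1$ is immediate from a degree count) yields $H_m\in|mD|$ with $\mult_xH_m>nm$ and $\mult_yH_m>nm$ for any two general $x,y\in X$. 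Setting $\Gamma:=\tfrac{1}{m}H_m\sim_\mathbb{Q} D$ produces an effective $\mathbb{Q}$-divisor with $\mult_x\Gamma,\mult_y\Gamma>n$.

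Since $x,y$ are general, $(X,B)$ is smooth at these points and $x,y\notin\Supp B$, so
\[
\lct_x(X,B;\Gamma)\leq \frac{n}{\mult_x\Gamma}<1
\]
and similarly at $y$. After relabelling (this is the ``switching $x$ and $y$'' of the statement), one may assume $\lambda:=\lct_x(X,B;\Gamma)\geq \lct_y(X,B;\Gamma)$. The bigness of $D$ permits a decomposition $D\sim_\mathbb{R} A+E$ with $A$ ample and $E\geq 0$; as $x,y$ are general, $x,y\notin\Supp E$, and we may fix a very general effective $A'\sim_\mathbb{R}(1-\lambda)A$ whose support avoids $x$ and $y$. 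Then
\[
D_t:=\lambda\Gamma+A'+(1-\lambda)E\sim_\mathbb{R} D
\]
agrees with $\lambda\Gamma$ in a neighbourhood of both points. By the choice of $\lambda$, the pair $(X,B+D_t)$ is log canonical at $x$ with a non-klt place over $x$, and is not Kawamata log terminal at $y$ since $\lambda\geq \lct_y$.

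To obtain a unique non-klt place with centre through $x$, apply Proposition \ref{p-mincentre}(1) to select a minimal non-klt centre $V$ of $(X,B+D_t)$ containing $x$ (normal by (2)), and then apply the tie-breaking provided by Proposition \ref{p-mincentre}(3): a small effective auxiliary divisor $\sim_\mathbb{R}(1-\lambda)A$ can be absorbed into the choice of $A'$, perturbing $D_t$ within its $\mathbb{R}$-linear equivalence class so that the resulting pair has $V$ as the only non-klt centre through $x$ with a single non-klt place above it, while log canonicity at $x$ and non-klt-ness at $y$ are preserved. The construction is uniform as $(x,y)$ varies in a dense open of $X\times X$, so the resulting subvarieties $V$ fit into a family $\pi\colon V\to T$, e.g.\ as a finite union of components of an appropriate Chow scheme of subvarieties of $X$. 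The main obstacle is this last step: running the tie-breaking perturbation without leaving the class $\sim_\mathbb{R} D$. Bigness of $D$ supplies the required flexibility in choosing the ample perturbation, but the perturbing divisor must be arranged so as to eliminate the competing non-klt centres through $x$ without disturbing the estimate that forces non-klt-ness at $y$.
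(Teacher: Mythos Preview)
Your argument is correct and follows essentially the same template as the paper's proof: a Riemann--Roch dimension count to produce a highly singular divisor, then a log canonical threshold adjustment to land in the class $\sim_{\mathbb{R}} D$, then tie-breaking via Proposition~\ref{p-mincentre}, and finally a Hilbert/Chow scheme argument to organise the centres into a family. The only cosmetic difference is that the paper constructs two separate divisors $G_x,G_y\sim_{\mathbb{R}} sD$ each with multiplicity $>2ns$ at a single point (using the full strength of $\vol(X,D)>(2n)^n$) and then works with $\lambda(G_x+G_y)+(1-2\lambda s)D$, whereas you build one divisor $\Gamma\sim_{\mathbb{Q}} D$ singular at both points simultaneously (using only $2n^n<(2n)^n$); both variants are standard and lead to the same conclusion.
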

\begin{proof} Since
\[
h^0(X,\ring X.(kD))=\frac{\vol (X,D)}{n!}\cdot k^n+O(k^{n-1})
\]
and vanishing at a smooth point $x\in X$ to order $l$ imposes
\[
\binom {n+l}{l}=\frac{l^n}{n!}+O(l^{n-1})
\] 
conditions, one sees that for any $s\gg 0$ there is a divisor
$0\leq G_x\sim _\mathbb{R} sD$ such that $\mult_x(G)>2ns$.  Let
\[
\lambda =\sup \{\, l>0 \,|\, \text{$(X,B+l(G_x+G_y))$ is log canonical at one of $x$ or $y$}\,\} < \frac 1{2s}.
\]
If $D'=\lambda(G_x+G_y)+(1-2\lambda s)D$ then $(X,B+D')$ is not Kawamata log terminal at
$x$ and $y$.  Possibly switching $x$ and $y$ we may assume that $(X,B+D')$ is log
canonical in a neighbourhood of $x$.  Perturbing $D'$ we may assume that there is a unique
non Kawamata log terminal place for $(X,B+D')$ whose centre $V$ contains $x$ (see
Proposition \ref{p-mincentre}).  The result now follows using the Hilbert scheme.
\end{proof}

\subsection{Minimal models}

\begin{theorem}[\cite{BCHM10}] Let $(X,B)$ be a $\mathbb{Q}$-factorial Kawamata log
terminal pair and $\pi\colon\map X.U.$ be a projective morphism such that either $B$ or
$K_X+B$ is big over $U$ (respectively $K_X+B$ is not pseudo-effective over $U$) , then
there is a good minimal model $\rmap X.X'.$ of $K_X+B$ over $U$ (respectively a Mori fibre
space $\map X'.Z.$) which is given by a finite sequence of flips and divisorial
contractions for the $K_X+B$ minimal model program with scaling of an ample divisor over
$U$.
\end{theorem}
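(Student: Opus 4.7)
The plan is to follow the strategy of \cite{BCHM10}: run a minimal model program with scaling of an ample divisor, and prove termination together with existence of the required flips by an intricate simultaneous induction on the dimension.

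First I would reduce to the case where $K_X+B$ is big over $U$. If $B$ is big over $U$ but $K_X+B$ is not manifestly big, write $B\sim_{\mathbb{R}} A+C$ over $U$ with $A$ ample and $C\geq 0$, and replace $B$ by $(1-\varepsilon)B+\varepsilon(A+C)$ for a small $\varepsilon>0$. The pair remains $\mathbb{Q}$-factorial Kawamata log terminal, and the new $K_X+B$ is big; moreover the MMP steps of the perturbed pair coincide with those of the original, so it suffices to treat this case. If instead $K_X+B$ is not pseudo-effective over $U$, the same scaling MMP terminates with a Mori fibre space, as the scaling parameter $\lambda_i$ must eventually vanish on a contraction of fibre type.

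Next, fix an ample $\mathbb{Q}$-divisor $H$ on $X$ such that $K_X+B+H$ is nef over $U$, and run the $(K_X+B)$-MMP with scaling of $H$: at the $i$-th step set
\[
\lambda_i=\inf\{\, t\geq 0 \,|\, \text{$K_{X_i}+B_i+tH_i$ is nef over $U$} \,\},
\]
choose a $(K_{X_i}+B_i)$-negative extremal ray on which $K_{X_i}+B_i+\lambda_iH_i$ is trivial, and contract or flip it. The existence of each required flip reduces, via a standard dlt modification together with special termination of flips with scaling, to the existence of pl-flips. Shokurov's argument in turn reduces pl-flips to finite generation of the restricted canonical algebra along a plt component of the boundary, and this finite generation is supplied by induction using the existence of good minimal models in dimension $n-1$.

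The main obstacle, and the heart of the argument, is termination of the scaling MMP. One proves in tandem on $n$: (i) existence of pl-flips, (ii) existence of good minimal models for klt pairs with $K_X+B$ big, (iii) the non-vanishing theorem, and (iv) finiteness of minimal models as the boundary varies in a rational polytope. In the big case the boundaries traced out by the scaling process lie in a fixed compact rational polytope, and item (iv) forces only finitely many distinct minimal models to appear; this is what pins down the sequence $\lambda_i$ and forces the MMP to terminate after finitely many steps. Once termination is achieved, a minimal model $f\colon\rmap X.X'.$ has $K_{X'}+f_\ast B$ nef and big over $U$, so the base point free theorem upgrades this to semi-ampleness and produces a good minimal model, completing the proof.
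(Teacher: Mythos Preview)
The paper does not give a proof of this statement at all: it is stated as a theorem with the citation \cite{BCHM10} and then immediately followed by the next theorem, with no proof environment in between. In this survey the result is simply quoted as a black box from the literature.

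Your proposal is a reasonable high-level outline of the actual argument in \cite{BCHM10}---the simultaneous induction on dimension involving existence of pl-flips, non-vanishing, finiteness of models, and termination with scaling---but this is far more than the paper itself undertakes. So the comparison is simply that the paper cites the result without proof, whereas you have sketched the strategy of the original source. If the intent was to reproduce the paper's treatment, the correct ``proof'' is just a reference to \cite{BCHM10}.
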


\begin{theorem}\label{t-model} Let $(X,B)$ be a log pair, then
\begin{enumerate}
\item There is a proper birational morphism $\nu\colon\map X'.X.$ such that $X'$ is
$\mathbb{Q}$-factorial,
\[
K_{X'}+\nu ^{-1}_*B+\Exc(\nu)=\nu ^*(K_X+B)+E
\] 
where $E\leq 0$ and $({X'},\nu ^{-1}_*B+\Exc(\nu))$ is divisorially log terminal.
\item If $(X,B)$ is Kawamata log terminal, then there exists a {\bf $\mathbb{Q}$-factorial
  modification}, that is, a small proper birational morphism $\nu\colon\map X'.X.$ such
that $X'$ is $\mathbb{Q}$-factorial.
\item If $(X,B)$ is $\mathbb{Q}$-factorial and log canonical and $W\subset \Supp( B)$
is a minimal non-Kawamata log terminal centre, then there exists a proper birational
morphism $\nu\colon\map X'.X.$ such that $\rho (X'/X)=1$, $\Exc(\nu )=E$ is an irreducible
divisor and
\[
K_{X'}+\nu ^{-1}_*B+E=\nu ^*(K_X+B).
\]
\end{enumerate}
\end{theorem}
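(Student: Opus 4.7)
The strategy for all three parts is to run an appropriate relative MMP on a (partial) log resolution of $(X,B)$, invoking BCHM for termination and the negativity lemma to control the output. For (1), I would take a log resolution $f\colon\map Y.X.$ of $(X,B)$ and set $\Theta_Y = f^{-1}_*B + \sum_i E_i$ summed over all $f$-exceptional prime divisors. Then $(Y,\Theta_Y)$ is log smooth, hence $\mathbb{Q}$-factorial dlt, and
\[
K_Y+\Theta_Y = f^*(K_X+B) + \sum_i \bigl(1+a_{E_i}(X,B)\bigr)\, E_i,
\]
so the difference is supported on the exceptional locus. A $(K_Y+\Theta_Y)$-MMP over $X$ with scaling of an ample divisor terminates by BCHM (after a small klt perturbation), and every step preserves both $\mathbb{Q}$-factoriality and the dlt property. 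At the output $\nu\colon\map X'.X.$, the $\mathbb{R}$-divisor $(K_{X'}+\nu^{-1}_*B+\Exc(\nu))-\nu^*(K_X+B)$ is exceptional and $\nu$-nef, hence $\leq 0$ by the negativity lemma. For (2) the observation is immediate: klt-ness of $(X,B)$ means every exceptional $E_i$ has $a_{E_i}(X,B)>-1$, so it appears with strictly positive coefficient in the discrepancy above, and the MMP of (1) must then contract all of them; the resulting $\nu$ is small, while $X'$ inherits $\mathbb{Q}$-factoriality from the MMP.

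For (3), pick a divisorial valuation $E_0$ over $X$ with centre $W$ and $a_{E_0}(X,B)=-1$, which exists because $W$ is a non-klt centre. Take a log resolution $f\colon\map Y.X.$ extracting $E_0$, arranged (using the minimality of $W$ and further generic blowing up if necessary) so that no other exceptional prime of $Y$ is a log canonical place of $(X,B)$. Setting $\Theta_Y = f^{-1}_*B + (1-\epsilon)E_0 + \sum_{i\geq 1} E_i$ for small $\epsilon>0$ yields
\[
K_Y+\Theta_Y - f^*(K_X+B) = -\epsilon E_0 + \sum_{i\geq 1}\bigl(1+a_{E_i}(X,B)\bigr)\, E_i,
\]
which is strictly negative on $E_0$ and, by the choice of resolution, strictly positive on each $E_i$ with $i\geq 1$. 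The $(K_Y+\Theta_Y)$-MMP over $X$ with scaling terminates by BCHM and, by the negativity lemma, contracts exactly the $E_i$ for $i\geq 1$ while leaving $E_0$ intact. The output $\nu\colon\map X'.X.$ thus has $\Exc(\nu)=E_0$ as a single irreducible divisor, forcing $\rho(X'/X)=1$ by the $\mathbb{Q}$-factoriality of $X'$; the identity $K_{X'}+\nu^{-1}_*B+E=\nu^*(K_X+B)$ then follows either by letting $\epsilon\to 0$, or more cleanly by reapplying the negativity lemma to the exceptional divisor $(K_{X'}+\nu^{-1}_*B+E_0)-\nu^*(K_X+B)$ once $\Exc(\nu)=E_0$ is known.

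The main obstacle I anticipate is the arrangement in (3) that $E_0$ be the \emph{only} exceptional log canonical place of $(X,B)$ on $Y$. On a generic log resolution, other exceptional primes $E_i$ might themselves be log canonical places of $(X,B)$; such $E_i$ would have coefficient zero in the relative discrepancy and resist contraction by the perturbed MMP, leaving more than one exceptional divisor on $X'$. This forces a controlled construction of $Y$: first extract $E_0$ alone (feasible because $W$ is the minimal non-klt centre, so no log canonical place strictly over a proper subvariety of $W$ is forced to appear), then resolve the remaining singularities by generic blow-ups whose exceptional divisors all have strictly positive log discrepancy with respect to $(X,B)$.
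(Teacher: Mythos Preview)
The paper's own proof is three lines of citations: (1) is \cite[3.3.1]{HMX14}, (2) is noted as an easy consequence of (1), and (3) is \cite[1.4.3]{BCHM10}. Your sketches for (1) and (2) are correct and reproduce the standard arguments behind those citations; your derivation of (2) from (1) matches the paper exactly.

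For (3) your framework is right, but the obstacle you flag is genuine and your proposed workaround is circular. One cannot in general arrange a log resolution $Y\to X$ on which $E_0$ is the only exceptional log canonical place of $(X,B)$: already for $(\mathbb{A}^2,L_1+L_2)$ with $W$ the origin, blowing up any node of the boundary on a higher model produces a fresh exceptional divisor of discrepancy $-1$, so iterated resolution manufactures more log canonical places, not fewer. Your suggestion to ``first extract $E_0$ alone'' presupposes the very statement under proof. The fix in \cite[1.4.3]{BCHM10} is not to choose $Y$ cleverly but to choose the \emph{boundary} cleverly: one works with a klt perturbation (this is where the $\mathbb{Q}$-factoriality of $X$ and the hypothesis $W\subset\Supp(B)$ enter), putting coefficients strictly less than $1$ on \emph{all} exceptional divisors of $Y$ and arranging that $E_0$ is the unique exceptional divisor whose coefficient in $K_Y+\Theta_Y-f^*(K_X+\Delta)$ is negative. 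The resulting klt MMP over $X$ then terminates by BCHM, and the negativity lemma forces every exceptional divisor other than $E_0$ to be contracted.
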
\begin{proof} (1) is \cite[3.3.1]{HMX14}.  (2) is an easy consequence of (1)
and (3) follows from \cite[1.4.3]{BCHM10}.
\end{proof}

\begin{proposition} \label{p-MF} Let $(X,B)$ be an $n$-dimensional $\mathbb{Q}$-factorial
divisorially log terminal pair, $0\ne S\leq \lfloor B\rfloor$ and $\pi \colon\map X.U.$ a
projective morphism to a smooth variety.  Let $0\in U$ be a closed point and
$r\in \mathbb{N}$ a positive integer such that $(X_0,B_0)$ is log canonical, $K_{X_0}+B_0$
is nef and $r(K_{X_0}+B_0)$ is Cartier.  Fix $\epsilon <\frac 1{2nr+1}$.

If $K_X+B-\epsilon S$ is not pseudo-effective, then we may run $f\colon\rmap X.Y.$ the
$(K_X+B-\epsilon S)$ minimal model program over $U$ such that
\begin{enumerate}
\item each step is $K_X+B$ trivial over a neighbourhood of $0\in U$,
\item there is a Mori fibre space $\psi\colon \map Y.Z.$ such that $f_*S$ dominates $Z$
and $K_{X'}+f_*B\sim _\mathbb{R} \psi ^*L$ for some $\mathbb{R}$-divisor $L$ on
$Z$.
\end{enumerate}
\end{proposition}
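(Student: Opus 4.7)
My plan is to run a $(K_X+B-\epsilon S)$-MMP over $U$ with scaling of an ample divisor, verifying along the way that each step is automatically $(K_X+B)$-trivial over a neighbourhood of $0$. Since $S\le\lfloor B\rfloor$ is reduced and $\epsilon<1$, the pair $(X,B-\epsilon S)$ is $\mathbb{Q}$-factorial divisorially log terminal with $\lfloor B-\epsilon S\rfloor=\lfloor B\rfloor-S$, so it makes sense to run such an MMP. As $K_X+B-\epsilon S$ is not pseudo-effective over $U$, a suitable extension of \cite{BCHM10} to the dlt case provides an MMP $f\colon X\dasharrow Y$ which must terminate with a Mori fibration $\psi\colon Y\to Z$ over $U$.

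To verify (1), I would examine an arbitrary step $X_i\dasharrow X_{i+1}$ contracting an extremal ray $R$. If the exceptional locus avoids a neighbourhood of $X_0$, there is nothing to check. Otherwise, by Kawamata's length of extremal rays I may choose a rational generator $C$ of $R$ contained in $X_0$ with $0<-(K_{X_i}+B_i-\epsilon S_i)\cdot C\le 2n$. By adjunction and the nefness of $K_{X_0}+B_0$, the intersection $(K_{X_i}+B_i)\cdot C=(K_{X_0}+B_0)\cdot C$ is nonnegative; since $r(K_{X_0}+B_0)$ is Cartier, it lies in $\tfrac{1}{r}\mathbb{Z}$. I would then show that the combination of the length bound, the forced inequality $\epsilon S_i\cdot C>(K_{X_i}+B_i)\cdot C$, and the smallness $\epsilon<1/(2nr+1)$ is incompatible with $(K_{X_i}+B_i)\cdot C\ge 1/r$; hence this intersection must vanish, which is precisely $(K_X+B)$-triviality of the step over a neighbourhood of $0$. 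Because the step is then crepant along $X_0$, the pair $(X_0,B_0)$ together with its log canonical structure and the Cartier index $r$ of $K+B$ are all preserved, so the argument iterates.

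For (2), I would apply the same analysis to the extremal ray $R$ of the Mori fibration $\psi\colon Y\to Z$: the argument of (1) yields $(K_Y+f_*B)\cdot R=0$, and because $\rho(Y/Z)=1$ this gives $K_Y+f_*B\equiv_Z 0$. The relative base-point free theorem then upgrades this to $K_Y+f_*B\sim_{\mathbb{R}}\psi^*L$ for some $\mathbb{R}$-divisor $L$ on $Z$. Combining $(K_Y+f_*B)\cdot R=0$ with $(K_Y+f_*B-\epsilon f_*S)\cdot R<0$ forces $f_*S\cdot R>0$, so $f_*S$ cannot be $\psi$-vertical and therefore dominates $Z$.

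The hard part is the numerical step in (1): verifying that the length bound $\leq 2n$, the Cartier-index lower bound $1/r$ on positive values of $(K_{X_0}+B_0)\cdot C$, and the precise calibration $\epsilon<1/(2nr+1)$ really do force $(K_{X_i}+B_i)\cdot C=0$. A closely related subtlety is that the hypotheses—nefness of $K+B$ on the central fibre and its Cartier index—must be propagated through the MMP, and it is exactly the $(K+B)$-triviality over a neighbourhood of $0$ that ensures the inductive hypothesis for the next step, legitimising the iteration.
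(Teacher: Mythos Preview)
Your overall strategy is the standard one and matches the argument in the cited reference \cite[5.2]{HMX14} (the paper itself gives no proof beyond that citation). However, there is a genuine gap in the numerical step, and the specific length bound you invoke is not the one that makes the argument close.

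You apply Kawamata's length bound to the pair $(X_i,B_i-\epsilon S_i)$, obtaining a curve $C\subset (X_i)_0$ in the ray $R$ with $-(K_{X_i}+B_i-\epsilon S_i)\cdot C\le 2n$. Writing $a=(K_{X_i}+B_i)\cdot C\in\tfrac1r\mathbb{Z}_{\ge 0}$, this only yields $a<\epsilon\, S_i\cdot C\le 2n+a$, which is vacuous: you obtain no upper bound on $S_i\cdot C$ that interacts with the smallness of $\epsilon$, so the calibration $\epsilon<1/(2nr+1)$ never enters and you cannot rule out $a\ge 1/r$. The missing idea is to apply the length bound to the pair $(X_i,B_i-S_i)$ instead. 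Since $S_i\le\lfloor B_i\rfloor$, this pair is again $\mathbb{Q}$-factorial dlt; and since $a\ge 0$ together with $(K_{X_i}+B_i-\epsilon S_i)\cdot R<0$ forces $S_i\cdot R>0$, one has $(K_{X_i}+B_i-S_i)\cdot R<0$ as well, so $-(K_{X_i}+B_i-S_i)$ is ample on the fibres of the same extremal contraction. Kawamata's bound then produces a rational curve $C'\subset (X_i)_0$ spanning $R$ with $S_i\cdot C'-a'\le 2n$, where $a'=(K_{X_i}+B_i)\cdot C'\in\tfrac1r\mathbb{Z}_{\ge 0}$. Combining with $a'<\epsilon\,S_i\cdot C'$ gives
\[
a'<\epsilon(2n+a'),\qquad\text{hence}\qquad a'<\frac{2n\epsilon}{1-\epsilon}<\frac{2n}{2nr}=\frac1r,
\]
forcing $a'=0$ and therefore $(K_{X_i}+B_i)\cdot R=0$. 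The remainder of your outline---the propagation of the central-fibre hypotheses by crepancy, and the analysis of the terminal Mori fibre space yielding $f_*S\cdot R>0$ and $K_Y+f_*B\sim_{\mathbb R}\psi^*L$---is correct.
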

\begin{proof}\cite[5.2]{HMX14}.\end{proof}

\subsection{DCC sets}

A set $I\subset \mathbb{R}$ is said to satisfy the {\bf descending chain condition} (DCC)
if any non increasing sequence in $I$ is eventually constant.  Similarly $I$ satisfies the
{\bf ascending chain condition} (ACC) if any non decreasing sequence in $I$ is eventually
constant or equivalently $-I=\{-i|i\in I\}$ satisfies the DCC.  The {\bf derived set} of
$I$ is defined by
\[
D(I)=\{\, \frac{r-1+i_i+\ldots +i_p}r \,|\, r\in \mathbb{N} ,\ i_j\in I \,\}.
\]
Note that $I$ satisfies the DCC if and only if $D(I)$ satisfies the DCC.

\begin{lemma}\label{l-5.2} Let $I\subset [0,1]$ be a DCC set and $J_0\subset [0,1]$ a
finite set, then
\[
I_1:=\{\, i\in I\,|\, \text{$\frac {m-1+f+ki}m\in J_0$, where $k$, $m\in \mathbb{N}$ and  $f\in D(I)$}\,\}
\]
is a finite set.
\end{lemma}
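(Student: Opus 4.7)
The plan is to argue by contradiction. Assume $I_1$ is infinite. Since $I \subset [0,1]$ satisfies the DCC, every infinite subset of $I$ contains a strictly increasing sequence, so I can extract $i_1 < i_2 < \cdots$ in $I_1$; after discarding the first term if necessary, I may assume $i_n \geq i_1 > 0$ for all $n$. For each $n$ fix witnesses $k_n, m_n \in \mathbb{N}$ and $f_n \in D(I)$ with
\[
j_n := \frac{m_n - 1 + f_n + k_n i_n}{m_n} \in J_0,
\]
and, using that $J_0$ is finite, pass to a subsequence on which $j_n$ equals a single constant $j$. Clearing the denominator yields the key identity
\[
f_n + k_n i_n \;=\; 1 - m_n(1 - j).
\]

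I would then control the remaining parameters in turn. If $j < 1$, non-negativity of the left-hand side forces $m_n \leq 1/(1-j)$, so after a further subsequence $m_n$ equals a constant $m$; if $j = 1$, the right-hand side is simply $1$ regardless of $m_n$. Either way, the right-hand side reduces to a fixed constant $c \in (0,1]$ (note that $c = 0$ is excluded, since it would force $k_n i_n = 0$, contradicting $k_n \geq 1$ and $i_n > 0$). Now $k_n \leq c/i_n \leq c/i_1$ is bounded, so along a further subsequence $k_n = k$ is a positive constant, and consequently
\[
f_n \;=\; c - k\,i_n
\]
is a strictly decreasing sequence in $D(I)$. This contradicts the DCC for $D(I)$, which is immediate from the DCC for $I$.

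The main obstacle, in my view, is not the chain of extractions but the preliminary bookkeeping that makes them work: verifying that the $i_n$ stay bounded away from zero so that $k_n$ can be controlled, treating the case $j=1$ separately (since $m_n$ then drops out of the bound), and ruling out $c = 0$ so that the final constant $k$ is forced to be positive. Each check is short, but all are needed to guarantee that $f_n = c - k i_n$ really is strictly decreasing and hence delivers the contradiction with the DCC of $D(I)$.
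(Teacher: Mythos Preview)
Your argument is correct. The paper itself does not give a proof of this lemma but simply cites \cite[5.2]{HMX14}, so there is nothing to compare against here; your contradiction argument---extracting a strictly increasing sequence in $I_1$, freezing $j_n$, then $m_n$ (or noting it drops out when $j=1$), then $k_n$, and deducing that $f_n=c-ki_n$ is strictly decreasing in $D(I)$---is the standard route and supplies exactly the details the paper omits.
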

\begin{proof} See \cite[5.2]{HMX14}.
\end{proof}

\subsection{Good minimal models}

We will need the following results from \cite[1.2, 1.4]{HMX14}.

\begin{theorem}\label{t-gmm} Let $(X,B)$ be a log pair and $\pi\colon\map X.U.$ a projective
morphism to a smooth affine variety such that $\coeff(B)\subset (0,1]\cap \mathbb{Q}$ and
$(X,B)$ is log smooth over $U$.  Suppose that there is a point $0\subset U$ such that
$(X_0, B_0)$ has a good minimal model.  Then $(X,B)$ has a good minimal model over $U$ and
every fibre has a good minimal model.

Furthermore, the relative ample model of $(X,B)$ over $U$ gives the relative ample model
of each fibre.
\end{theorem}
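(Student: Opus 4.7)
The plan is to follow the scheme of \cite{HMX14}, combining the relative MMP from \cite{BCHM10} with deformation invariance of plurigenera for log pairs (due to Berndtsson-P\u aun). Using Theorem \ref{t-model}(1), I would first pass to a $\mathbb{Q}$-factorial divisorially log terminal model; after shrinking $U$ around $0$ log smoothness over $U$ is preserved, so I may assume that $X$ is $\mathbb{Q}$-factorial and $(X,B)$ is dlt with $K_X+B$ $\mathbb{Q}$-Cartier.

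The crux is to establish finite generation of the relative log canonical ring $R(X/U, K_X+B)$ and to identify $\operatorname{Proj}_U R(X/U, K_X+B)$ as the relative ample model. By hypothesis $K_{X_0}+B_0$ admits a good minimal model $X_0 \dashrightarrow Y_0$, so $K_{Y_0}+B_{Y_0}$ is semi-ample and its ample model $Z_0$ exists with $R(X_0, K_{X_0}+B_0)$ finitely generated. I would then apply invariance of plurigenera for pairs: for all sufficiently divisible $m$ such that $m(K_X+B)$ is Cartier, the restriction
\[
H^0(X, m(K_X+B)) \longrightarrow H^0(X_0, m(K_{X_0}+B_0))
\]
is surjective, possibly after shrinking $U$ around $0$. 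Combined with cohomology-and-base-change (using flatness of $(X,B)$ over the smooth affine $U$, which is a consequence of log smoothness), this promotes finite generation on the central fiber to finite generation of the relative ring. Set $W := \operatorname{Proj}_U R(X/U, K_X+B) \to U$; by construction $W_0 = Z_0$, and this is the candidate relative ample model.

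Next I would run a $(K_X+B)$-MMP with scaling of a $\pi$-ample divisor over $U$. The existence of the ample model $W$ over $U$ forces this MMP to terminate, along the lines of \cite{BCHM10}: the stable base locus of $K_X+B$ over $U$ is contracted in finitely many steps, yielding a good minimal model $f\colon X \dashrightarrow Y$ over $U$ with $K_Y+f_*B$ semi-ample over $U$ and with induced semi-ample fibration $Y \to W$ over $U$. For the fiberwise statement, log smoothness of $(X,B)$ over $U$ ensures that every stratum is flat over $U$ and that each step of the MMP is transverse to fibers, so it restricts to a step of the $(K_{X_0}+B_0)$-MMP, and more generally of the $(K_{X_t}+B_t)$-MMP, on each fiber. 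Consequently $Y_t \dashrightarrow W_t$ realizes a good minimal model of $(X_t, B_t)$ with ample model $W_t$, for every $t\in U$.

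The main obstacle is the second step: applying Berndtsson-P\u aun invariance of plurigenera for log pairs with rational (non-integral) coefficients is a substantive analytic input, not provided by purely algebraic methods, and must be combined with base change to upgrade finite generation on the special fiber into finite generation of the relative canonical ring. A secondary difficulty is verifying that the MMP steps over $U$ restrict to honest MMP steps on each fiber and preserve the log smooth structure; here the log smoothness hypothesis, which makes every stratum of $(X, \operatorname{Supp}(B))$ smooth over $U$, is essential, and is what allows the fiberwise conclusion to follow formally from the relative one.
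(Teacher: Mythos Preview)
The paper itself does not prove this statement; it simply cites \cite[1.2, 1.4]{HMX14}. Your sketch names the right ingredients---invariance of plurigenera for log pairs and the MMP with scaling---but the logical order is inverted in a way that introduces a genuine gap.

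The problematic step is the claim that surjectivity of the restriction
\[
H^0(X, m(K_X+B)) \longrightarrow H^0(X_0, m(K_{X_0}+B_0))
\]
together with finite generation of $R(X_0,K_{X_0}+B_0)$ ``promotes'' to finite generation of the relative ring $R(X/U,K_X+B)$. This is not a valid deduction: surjectivity in each degree plus finite generation of the quotient does not imply finite generation of the source (consider, for instance, a non-finitely-generated graded subring of a polynomial ring surjecting onto a quotient). You then use this purported finite generation to define $W=\operatorname{Proj}_U R(X/U,K_X+B)$ and argue that the existence of $W$ forces termination of the MMP. This is circular: in the actual argument, finite generation is a \emph{consequence} of having constructed a good minimal model, not an input to its construction.

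The argument in \cite{HMX14} runs in the opposite direction. One runs the $(K_X+B)$-MMP with scaling over $U$ directly, and the hard work is proving termination. Invariance of plurigenera is used not to deduce finite generation, but to show that Nakayama's $\sigma$-decomposition (equivalently, the diminished base locus) of $K_X+B$ over $U$ restricts correctly to the fibre over $0$; since $(X_0,B_0)$ has a good minimal model, the divisorial part of this base locus is controlled, and one argues that after finitely many steps the MMP must reach a model on which $K+B$ is nef over $U$. Semi-ampleness then follows, and only at that point does one obtain finite generation and the relative ample model. The fibrewise statement is deduced afterwards, again using invariance of plurigenera to compare the relative ample model with the ample model of each fibre. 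Your remark that log smoothness over $U$ is what makes the fibrewise restriction work is correct and important, but it enters at this later stage rather than at the outset.

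A minor point: since $(X,B)$ is assumed log smooth, $X$ is already smooth and hence $\mathbb{Q}$-factorial, and $(X,B)$ is already dlt; the initial appeal to Theorem~\ref{t-model}(1) is unnecessary.
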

\begin{proof} \cite[1.2, 1.4]{HMX14}.
\end{proof}

\begin{theorem}\label{t-definv} Let $(X,B)$ be a log pair such that $\coeff(B)\subset (0,1]$
and $\pi\colon\map X.U.$ a projective morphism such that $(X,B)$ is log smooth over $U$.
Then $h^0(X_u,\ring X_u.(m(K_{X_u}+B_u)))$ is independent of $u\in U$.  In particular
\[
\map f_*{\ring X.(m(K_X+B))}.{H^0(X_u,\ring X_u.(m(K_{X_u}+B_u)))}.
\] 
is surjective for all $u\in U$.
\end{theorem}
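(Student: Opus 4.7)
The plan is the following. The ``In particular'' clause follows from the first assertion by Grauert's theorem: if $h^0$ is locally constant in a flat family of projective varieties, then the higher direct image is locally free and its formation commutes with base change. So I focus on proving that $u \mapsto h^0(X_u, \mathcal{O}_{X_u}(m(K_{X_u}+B_u)))$ is locally constant on $U$. Because $(X,B)$ is log smooth over $U$, every stratum of $B$ is flat over $U$ and the restriction $\mathcal{O}_X(m(K_X+B))|_{X_u}$ computes $\mathcal{O}_{X_u}(m(K_{X_u}+B_u))$; consequently, upper semi-continuity of $h^0$ for flat families gives one inequality. The real content is lower semi-continuity at an arbitrary closed point $u_0 \in U$.

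Fix such $u_0$. If $h^0(X_{u_0}, m(K_{X_{u_0}}+B_{u_0})) = 0$, there is nothing to prove, so I may assume this integer is positive, in which case $K_{X_{u_0}}+B_{u_0}$ has non-negative Iitaka dimension. The next step is to show that $(X_{u_0}, B_{u_0})$ admits a good minimal model: in the Kawamata log terminal, big-boundary case this is \cite{BCHM10}, and the general log canonical case permitted by $\coeff(B) \subset (0,1]$ is handled by a log-smooth perturbation of the coefficients (possible precisely because of log smoothness over $U$) which reduces to \cite{BCHM10} and then passes back via a limit. With this input in hand, Theorem \ref{t-gmm} applies: after shrinking $U$ around $u_0$, $(X,B)$ has a relative good minimal model $\phi\colon X \dashrightarrow Y$ over $U$, whose restriction $\phi_u\colon X_u \dashrightarrow Y_u$ is a good minimal model of $(X_u,B_u)$ for every $u \in U$.

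Let $g\colon Y \to W$ be the associated relative ample model over $U$, so that $K_Y + \phi_* B \sim_{\mathbb{Q}} g^* H$ for some $\pi_W$-ample $\mathbb{Q}$-divisor $H$ on $W$; by the final clause of Theorem \ref{t-gmm}, $g$ restricts on each fibre to the ample model, with $K_{Y_u}+\phi_{u*}B_u \sim_{\mathbb{Q}} g_u^* H_u$. Since weak log canonical models preserve pluri-section spaces,
$$h^0(X_u, m(K_{X_u}+B_u)) \;=\; h^0(Y_u, m(K_{Y_u}+\phi_{u*}B_u)) \;=\; h^0(W_u, \lfloor mH_u\rfloor),$$
and the right-hand side is independent of $u$: for $m$ sufficiently divisible and large, Serre vanishing combined with cohomology and base change gives that $\pi_{W*}\mathcal{O}_W(mH)$ is locally free with fibre $H^0(W_u, mH_u)$; the identification $W = \Proj_U \bigoplus_m \pi_*\mathcal{O}_X(m(K_X+B))$, which the ample model structure provides, then forces every graded piece to base change correctly. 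The main obstacle is producing the good minimal model of the central fibre in the log canonical (non-klt) range of coefficients; bootstrapping from \cite{BCHM10} via the log-smooth perturbation is the crux, after which Theorem \ref{t-gmm} does all the family-theoretic work.
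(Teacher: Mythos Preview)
Your plan has a genuine gap at the step you yourself flag as ``the crux'': producing a good minimal model for the central fibre $(X_{u_0},B_{u_0})$.  You propose to get this from \cite{BCHM10} via a perturbation of the coefficients, but \cite{BCHM10} requires either $K_X+B$ big or $B$ big over the base.  Knowing only that $h^0(X_{u_0},m(K_{X_{u_0}}+B_{u_0}))>0$ for one fixed $m$ gives you effectivity, not bigness; when $0\le\kappa(K_{X_{u_0}}+B_{u_0})<n$ the existence of a good minimal model is exactly the abundance conjecture, and no perturbation of the boundary within the log smooth locus will manufacture bigness.  So the input you need for Theorem~\ref{t-gmm} is simply not available in general, and the argument stalls there.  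A secondary problem is that Theorem~\ref{t-gmm} is stated for $\coeff(B)\subset(0,1]\cap\mathbb{Q}$, while the present theorem allows real coefficients; you never explain how to bridge that.

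The paper's proof avoids both issues by a one-line reduction you have missed: since $\ring X.(m(K_X+B))=\ring X.(m(K_X+B_{\lfloor m\rfloor}))$ with $B_{\lfloor m\rfloor}=\lfloor mB\rfloor/m$, one may replace $B$ by a divisor with rational coefficients and then invoke \cite[1.2]{HMX14} directly.  The point is that in \cite{HMX14} the invariance of log plurigenera is proved \emph{first}, by extension theorems that do not presuppose a good minimal model on any fibre, and is then used as an \emph{input} to the family version of the good minimal model theorem (what appears here as Theorem~\ref{t-gmm}).  Your plan reverses this logical dependence, which is why it runs into abundance.
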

\begin{proof} Notice that
\[
\ring X.(m(K_X+B))=\ring X.(\lfloor m(K_X+B)\rfloor )=\ring X.( m(K_X+B_{\lfloor m\rfloor}))
\]
where $ B_{\lfloor m\rfloor }=\lfloor mB\rfloor/m$.  The statement now follows from
\cite[1.2]{HMX14}.
\end{proof}

\subsection{Log birational boundedness}

We begin with the following easy:
\begin{lemma}\label{l-pb} Let $(X,B)$ be a projective log pair and $D$ a big
$\mathbb{R}$-divisor such that for general points $x$, $y\in X$ there is an
$\mathbb{R}$-divisor $0\leq D'\sim _\mathbb{R} \lambda D$ for some $\lambda <1$ such that
\begin{enumerate}
\item $x$ is an isolated non Kawamata log terminal centre of $(X,B+D')$,
\item $(X,B+D')$ is log canonical in a neighbourhood of $x\in X$, and
\item $(X,B+D')$ is not Kawamata log terminal at $y$.
\end{enumerate}
Then $\phi _{K_X+\lceil D \rceil}$ is birational.
\end{lemma}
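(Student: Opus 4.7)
The plan is to use a Nadel vanishing argument: the multiplier ideal $\mathcal{J}(X,B+D')$ will have cosupport consisting of the isolated point $\{x\}$ together with a subscheme containing $y$, and vanishing will provide a section of $|K_X+\lceil D\rceil|$ separating the generic pair $x,y$.

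First I would identify the cosupport structure. By (1) together with Proposition \ref{p-mincentre}(3) and (2), for a very general pair of points there is a unique non Kawamata log terminal place of $(X,B+D')$ over $x$, and locally near $x$ the cosupport of $\mathcal{J}(X,B+D')$ is the reduced point $\{x\}$. By (3), $y$ also lies in this cosupport, so we obtain a decomposition
\[
\mathcal{O}_X/\mathcal{J}(X,B+D') = \mathcal{F}_x \oplus \mathcal{G}, \qquad \Supp(\mathcal{F}_x)=\{x\}, \quad y\in \Supp(\mathcal{G}).
\]

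Next I would establish the positivity required for vanishing. Since $\lceil D\rceil - D\geq 0$ and $\lambda<1$, the divisor
\[
\lceil D\rceil - D' \sim_{\mathbb{R}} (\lceil D\rceil - D) + (1-\lambda)D
\]
is big. Writing $D\sim_{\mathbb{R}} \eta A + E$ with $A$ ample, $\eta>0$ small, and $E\geq 0$, and choosing $x,y$ general enough to avoid $\Supp(E)$, I would perturb $D'$ by a small multiple of $A$ so that $\lceil D\rceil - B - D'$ becomes big and nef while properties (1)--(3) are preserved. Nadel vanishing applied on a log resolution of $(X,B+D')$ then yields
\[
H^1\bigl(X,\mathcal{O}_X(K_X+\lceil D\rceil)\otimes \mathcal{J}(X,B+D')\bigr)=0,
\]
and tensoring the short exact sequence $0\to \mathcal{J}(X,B+D')\to \mathcal{O}_X\to \mathcal{F}_x\oplus \mathcal{G}\to 0$ by $\mathcal{O}_X(K_X+\lceil D\rceil)$ gives a surjection on global sections. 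Since $\mathcal{F}_x$ is a skyscraper at $x$, a section can be chosen vanishing on $\mathcal{G}$ (in particular at $y$) but nonvanishing at $x$. As $x,y$ were generic, this separates the generic pair, so $\phi_{K_X+\lceil D\rceil}$ is birational.

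The main obstacle is the perturbation step: arranging for $\lceil D\rceil - B - D'$ to be big and nef after a small ample perturbation, without destroying the isolated non Kawamata log terminal centre at $x$ or the non-klt condition at $y$. Here the hypothesis $\lambda<1$ supplies the necessary slack via the Kodaira-type decomposition of $D$, and once this is in hand the rest of the proof is a routine Nadel vanishing / section-extraction argument in the spirit of the sketch of Theorem \ref{t-Tsuji}.
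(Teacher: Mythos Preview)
Your overall strategy---Nadel vanishing to produce a surjection onto $\mathcal{O}_X/\mathcal{J}$, then lifting a section that separates $x$ from $y$---is exactly the paper's approach. The gap is in how you handle the boundary $B$. You work directly with $\mathcal{J}(X,B+D')$ and therefore need $\lceil D\rceil - B - D'$ to be big and nef for Nadel vanishing. But there is no hypothesis relating $D$ and $B$: the divisor $B$ could be large enough that $\lceil D\rceil - B$ is not even pseudo-effective, and no perturbation of $D'$ by a small ample can repair this. Your claimed perturbation step, which you correctly flag as the main obstacle, does not go through.

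The paper avoids this entirely by a preliminary reduction you omit: take a resolution $\nu\colon X'\to X$ of $(X,B)$. Since $x,y$ are general, $\nu$ is an isomorphism near them, and it suffices to show $\phi_{K_{X'}+\lceil \nu^*D\rceil}$ is birational. On $X'$ one now has a \emph{smooth} variety and may take $B=0$; conditions (1)--(3) for $(X,B+D')$ at $x,y$ become the same conditions for $(X',D')$ alone. Then one only needs $\lceil D\rceil - D'\sim_{\mathbb{R}} (\lceil D\rceil - D)+(1-\lambda)D$ to be positive, and this follows immediately from $\lambda<1$ and bigness of $D$ (with the usual absorption of the effective part into $D'$ away from $x,y$ to get ampleness). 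Once you make this reduction, the rest of your argument---the splitting $\mathcal{O}_X/\mathcal{J}(D')\cong \mathcal{F}\oplus\mathcal{G}$ with $\Supp(\mathcal{F})=\{x\}$ and $y\in\Supp(\mathcal{G})$, and lifting a section nonzero at $x$ and vanishing at $y$---is exactly right. (One small point: the local summand $\mathcal{F}$ at $x$ need not be the reduced skyscraper $\mathcal{O}_X/\mathfrak{m}_x$, but it surjects onto $\mathbb{C}(x)$, which is all you need.)
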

\begin{proof} Fix a resolution $\nu\colon\map X'.X.$ of $(X,B)$.  As $x$ and $y$ are
general, $\nu$ is an isomorphism in a neighbourhood of $x$ and $y$.  If
$\phi _{K_{X'}+\lceil \nu ^*D\rceil}$ is birational then so is
$\phi _{K_X+\lceil D \rceil}$.  Therefore, replacing $X$ by $X'$, we may assume that $X$
is smooth and $B=0$.

As $x$ is an isolated non Kawamata log terminal centre, we have
$\mathcal J(D')\cong \frak m _x$ in a neighbourhood of $x\in X$.  By Nadel vanishing (see
\cite[9.4.8]{Lazarsfeld04b}), we have
\[
H^1\big(X,\ring X.(K_X+\lceil D \rceil)\otimes \mathcal J (D')\big)=0
\] 
and so
\[
\map {H^0(X,\ring X.(K_X+\lceil D \rceil))}.{H^0(X,\ring X.(K_X+\lceil D \rceil)\otimes \ring X./\mathcal J (D'))}.
\] 
is surjective. 

But since $\ring X./ \mathcal J (D')$ has a summand isomorphic to
$\ring X./\frak m _x$ and another summand whose support contains $y$, it follows that
we may lift a section of
$H^0(X,\ring X.(K_X+\lceil D \rceil)\otimes \ring X./ \mathcal J (D'))$ not vanishing
at $x$ and vanishing at $y$ to a section of $H^0(X,\ring X.(K_X+\lceil D \rceil))$ and
the assertion is proven.
\end{proof}

\begin{definition}\label{d-bounded} We say that a set of varieties $\mathfrak{X}$ is 
{\bf bounded} (resp.  {\bf birationally bounded}) if there exists a projective morphism
$\map Z.T.$, where $T$ is of finite type, such that for every $X\in \mathfrak{X}$, there
is a closed point $t\in T$ and an isomorphism (resp. a birational map)
$f\colon\map X.Z_t.$.  We say that a set $\mathfrak{D}$ of log pairs is {\bf bounded}
(resp. {\bf log birationally bounded}) if there is a log pair $(Z,D)$, where the
coefficients of $D$ are all one, and there is a projective morphism $\map Z.T.$, where $T$
is of finite type, such that for every pair $(X,B)\in \mathfrak{D}$, there is a closed
point $t\in T$ and a map $f\colon\map Z_t.X.$ inducing an isomorphism
$(X,B_{\rm red})\cong (Z_t,D_t)$ (resp. such that the support of $D_t$ contains the
support of the strict transform of $B$ and any $f$-exceptional divisor).
\end{definition}

\begin{remark} Note that, by a standard Hilbert scheme argument, a set of varieties
$\mathfrak{X}$ (resp. of pairs) is bounded if there exists a constant $C>0$ such that for
each $X\in \mathfrak{X}$ there is a very ample divisor $H$ on $X$ such that
$H^{\dim X}\leq C$ (resp. $H^{\dim X}\leq C$ and $B_{\rm red}\cdot H^{\dim X-1}\leq C$).
\end{remark}

\begin{proposition}\label{p-BB} Fix $n\in \mathbb{N}$, $A>0$ and $\delta >0$.  The set of
projective log canonical pairs $(X,B=\sum b_iB_i)$ such that
\begin{enumerate}
\item $\dim X =n$,
\item  $b_i\geq \delta$, 
\item $|m(K_X+B)|$ is birational, and 
\item $\vol (m(K_X+B))\leq A$
\end{enumerate} 
is log birationally bounded.
\end{proposition}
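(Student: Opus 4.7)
The plan is to birationally realize each pair $(X,B)$ in the class as a subvariety $\bar Z\subset\mathbb{P}^{2n+1}$ of bounded degree, and then to control the image of $B$ together with the exceptional locus of the induced birational map by an intersection estimate that exploits the lower bound $b_i\ge\delta$. A Chow-variety argument (cf.\ the remark after Definition~\ref{d-bounded}) then produces the required bounded family of pairs.

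Let $\mu\colon Y\to X$ be a log resolution such that the moving part $M$ of $\mu^*|m(K_X+B)|$ is base point free, and write $\mu^*(m(K_X+B))=M+F$ with $F\ge 0$. The birational morphism $\phi_{|M|}\colon Y\to Z\subset\mathbb{P}^N$ satisfies $\deg Z=M^n\le\vol(m(K_X+B))\le A$. A sufficiently general linear projection $\mathbb{P}^N\dashrightarrow\mathbb{P}^{2n+1}$ is birational on $Z$ (its secant variety has dimension at most $2n+1$), and realises $Z$ as birational to a subvariety $\bar Z\subset\mathbb{P}^{2n+1}$ of degree $\le A$; such subvarieties form a bounded family via the Chow variety of $\mathbb{P}^{2n+1}$. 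Let $\pi\colon Y\to\bar Z$ be the induced morphism and $H$ the hyperplane class on $\bar Z$, so $\pi^*H=M$.

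For log birational boundedness it suffices, by the remark after Definition~\ref{d-bounded}, to produce a reduced divisor $D_{\bar Z}$ on $\bar Z$ of degree bounded in terms of $n,A,\delta$ whose support contains the image of $\mu^{-1}_*B$ and of every $\mu$-exceptional divisor not contracted by $\pi$. The projection formula gives
\[
\deg_{\mathbb{P}^{2n+1}}D_{\bar Z}=D_{\bar Z}\cdot H^{n-1}\le(\tilde B_{\mathrm{red}}+\Exc(\mu))\cdot M^{n-1},
\]
and the hypotheses $b_i\ge\delta$ and $\delta\le 1$ yield $\tilde B_{\mathrm{red}}+\Exc(\mu)\le\delta^{-1}\mathbf{M}_{B,Y}$. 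The whole question thus reduces to a uniform intersection bound $\mathbf{M}_{B,Y}\cdot M^{n-1}\le C(n,A,\delta)$.

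This last intersection estimate is the main obstacle. Writing $\mathbf{M}_{B,Y}=\mu^*(K_X+B)-K_Y+T$, where $T\ge 0$ is a $\mu$-exceptional divisor whose coefficients are $1+a_E(X,B)\ge 0$ (by log canonicity), and using $m\mu^*(K_X+B)=M+F$, it suffices to bound $F\cdot M^{n-1}$, $K_Y\cdot M^{n-1}$ and $T\cdot M^{n-1}$ in terms of $n,A,\delta$. My approach is to restrict to a general member $S\in|M|$: by Bertini $S$ is smooth and irreducible, $M_S:=M|_S$ is nef and base point free with $M_S^{n-1}=M^n\le A$, and $\phi_{|M_S|}$ remains birational, while adjunction places the restricted pair $(S,B_Y|_S)$ in an analogous configuration in dimension $n-1$ with coefficients bounded below by $\delta$. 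Since $\mathbf{M}_{B,Y}\cdot M^{n-1}=(\mathbf{M}_{B,Y}|_S)\cdot M_S^{n-2}$, induction on $n$ gives the required bound; the base case $n=1$ reduces to $\mathbf{M}_{B,Y}\cdot M^0=\deg B\le\vol(K_X+B)+2\le A+2$. With this uniform bound, the pairs $(\bar Z,D_{\bar Z})$ form a bounded family, completing the proof.
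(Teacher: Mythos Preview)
Your strategy is sound up to the point where you reduce to bounding $\mathbf{M}_{B,Y}\cdot M^{n-1}$; the gap is in the inductive step.  When you restrict to a general $S\in|M|$ and set $B_S=\mathbf{M}_{B,Y}|_S$, the pair $(S,B_S)$ does have coefficients $\ge\delta$ and $|M_S|$ is birational with $M_S^{n-1}\le A$.  But to invoke the Proposition in dimension $n-1$ you also need a uniform bound on $\vol(m'(K_S+B_S))$ for some $m'$ with $|m'(K_S+B_S)|$ birational, and you do not have one.  Adjunction gives $K_S+B_S=(K_Y+\mathbf{M}_{B,Y}+M)|_S$ and $K_Y+\mathbf{M}_{B,Y}=\mu^*(K_X+B)+T$, so
\[
m(K_S+B_S)=\bigl((m+1)M+F+mT\bigr)\big|_S.
\]
The volume of this restriction depends on $F|_S$ and $T|_S$, which are not controlled by $n,A,\delta$: they depend on the chosen resolution $Y$.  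To see the circularity concretely, take $n=2$.  Your base case yields $\deg B_S\le\vol(K_S+B_S)+2=\deg(K_S+B_S)+2$, but since $\deg B_S=\deg(K_S+B_S)-\deg K_S$ this just says $g(S)\ge 0$ and gives no bound whatsoever on $\mathbf{M}_{B,Y}\cdot M=\deg B_S$.

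The paper does not proceed by induction on the dimension here.  The intersection bound comes from Lemma~\ref{l-volin}: for reduced $D$ and $L=2(2n+1)M$ one has $D\cdot L^{n-1}\le 2^n\vol(X,K_X+D+L)$, proved by a direct cohomological comparison (Kawamata--Viehweg vanishing plus a filtration argument on $h^0$).  One then applies this with $D=B_0:=\phi^{-1}_*\phi_*B_{\rm red}\le\delta^{-1}B$ and bounds $\vol(X,K_X+B_0+L)$ by first producing an effective member of $|K_X+(n+1)\lfloor m(K_X+B)\rfloor|$ (Lemma~\ref{l-xy}), so that $K_X+B_0+L$ is $\mathbb{R}$-linearly dominated by a fixed multiple of $K_X+B$, whose volume is at most $A$.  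The missing ingredient in your argument is exactly a substitute for Lemma~\ref{l-volin}; the hyperplane-section induction does not provide one.
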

\begin{proof} We first reduce to the case that the rational map
\[
\phi\colon =\phi _{m(K_X+B)}\colon\rmap X.Z.
\] 
is a birational morphism.  To see this let $\nu \colon\map X'.X.$ be a resolution of the
indeterminacies of $\phi$ and $B'=\nu ^{-1}_*B+\Exc(\nu)$, then $K_{X'}+B'-\nu ^*(K_X+B)$
is an effective exceptional divisor.  In particular
\[
\vol (X',K_{X'}+B')=\vol (X,K_X+B)\leq A
\]
and $\phi _{m(K_{X'}+B')}$ is birational.  Therefore it suffices to show that the pairs
$(X',B')$ are log birationally bounded.  Replacing $(X,B)$ by $(X',B')$ we may assume that
$\phi$ is a morphism.

Let $|\lfloor m(K_X+B)\rfloor |=|M|+E$ where $E$ is the fixed part and $|M|$ is base point
free so that $M=\phi^*H$ for some very ample divisor $H$ on $Z$.  Note that
\[
H^n=\vol(Z,H)\leq \vol(X,m(K_X+B))\leq A
\] 
and hence it suffices to show that $\phi _* (B_{\rm red})\cdot H^{n-1}$ is bounded.  Let
$B_0=\phi ^{-1}_*\phi _* B_{\rm red}$ and $L=2(2n+1)H$.  By Lemma \ref{l-xy}
\[
|K_X+(n+1)\lfloor m(K_X+B)\rfloor |\neq \emptyset
\]
and since $B_0\leq \frac 1 \delta B$ it follows that there is an effective
$\mathbb{R}$-divisor $C$ such that
\begin{equation}\label{e-div}
B_0+C\sim _\mathbb{R} \frac {m(n+1)+1} \delta (K_X+B).
\end{equation} 
We have that
\begin{align*} 
\phi _* B_{\rm red}\cdot L^{n-1}&=B_0\cdot (2(2n+1)M)^{n-1}\\
                              &\leq 2^n\vol(X,K_X+B_0+2(2n+1)M)\\
                              &\leq 2^n\vol(X,K_X+\frac {m(n+1)+1}\delta (K_X+B)+2(2n+1)m(K_X+B))\\
                              &\leq 2^n(1+(\frac {n+2} \delta +4n +2))^n\vol(X,m(K_X+B))\\
                              &\leq  2^n(1+(\frac {n+2} \delta +4n +2))^nA,
\end{align*} 
where the second inequality follows from Lemma \ref{l-volin} and the third from equation
\eqref{e-div}.
\end{proof}

\begin{lemma}\label{l-volin} Let $X$ be an $n$-dimensional normal projective variety, $M$ a 
Cartier divisor such that $|M|$ is base point free and $\phi _M$ is birational.  If
$L=2(2n+1)M$, and $D$ is a reduced divisor, then
\[
D\cdot L ^{n-1}\leq 2^n \vol(X,K_X+D+L).
\]
\end{lemma}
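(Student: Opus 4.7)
\begin{sproof}
The plan is to use restriction to a divisor together with Kawamata-Viehweg vanishing and induction on $n$. After reducing to the case where $X$ is smooth via a log resolution of $(X,D)$ (using Lemma \ref{l-volumes} to control both sides of the inequality), observe that since $|M|$ is base point free and $\phi_M$ is birational, $L = 2(2n+1)M$ is big and nef. The core cohomological input is the short exact sequence
\[
0 \to \mathcal{O}_X(K_X + L) \to \mathcal{O}_X(K_X + D + L) \to \mathcal{O}_D(K_D + L|_D) \to 0
\]
obtained from adjunction, which combined with Kawamata-Viehweg vanishing $H^1(X, K_X + L) = 0$ yields a surjection on global sections. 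Iterating the analogous sequences for the multiples $m(K_X+D+L)$, with Kawamata-Viehweg applied to kernels of the form $m(K_X+D+L) - D$ at each stage, produces the asymptotic inequality
\[
\vol(X, K_X + D + L) \geq \vol(D, K_D + L|_D).
\]

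The lemma would then follow from the companion inequality $\vol(D, K_D + L|_D) \geq 2^{-n} D \cdot L^{n-1}$, which I would prove by induction on $n$. The base case $n=1$ is an elementary Riemann-Roch computation on a curve, using that $\phi_M$ birational forces $\deg M \geq 1$. For the inductive step, I would restrict to a general component or hyperplane section of $D$ cut out by $|M|$, exploit that $M$ restricts to a divisor whose induced map remains birational (by a Bertini-type argument for generic choices), and apply the inductive hypothesis in dimension $n-1$. The comparison of $L|_D = 2(2n+1)M|_D$ against the inductive target $L' = 2(2(n-1)+1)M|_D$ provides positive effective slack, which is exactly what is needed to absorb the adjunction corrections at each level of the iteration.

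The main obstacle will be the repeated verification of Kawamata-Viehweg vanishing: since $K_X$ may be arbitrarily negative and $D$ need not be nef, the bigness and nefness of the various kernel twists are not automatic. The large coefficient $2(2n+1)$ in $L = 2(2n+1)M$ is designed precisely to absorb this: at each restriction one copy of $M$ is consumed via adjunction, but after at most $n$ restrictions the remaining coefficient $2(2n+1) - (n-1) = 3n+3$ is still positive, keeping the argument alive until it terminates on a curve. The factor $2^n$ in the main inequality then emerges from carefully tracking constants through the $n$-fold iteration of the Kawamata-Viehweg and asymptotic Riemann-Roch estimates; one factor of $2$ is spent per dimension when passing from $\vol(D, K_D + L|_D)$ down to $\vol(D, L|_D) = D \cdot L^{n-1}$ via the restriction sequences.
\end{sproof}
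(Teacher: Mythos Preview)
Your approach has a genuine gap at the vanishing step. To obtain the surjection
\[
H^0\big(X,\ring X.(m(K_X+D+L))\big)\twoheadrightarrow H^0\big(D,\ring D.(m(K_D+L|_D))\big)
\]
for all $m\geq 1$ you need $H^1\big(X,\ring X.(m(K_X+D+L)-D)\big)=0$, i.e.\ Kawamata--Viehweg for $K_X+\big[(m-1)K_X+(m-1)D+mL\big]$. This requires $(m-1)K_X+(m-1)D+mL$ to be big and nef (or to carry a klt boundary making it so), but as $m\to\infty$ the term $(m-1)K_X$ dominates and nothing in the hypotheses prevents $K_X$ from being arbitrarily negative. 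Your proposed remedy---that the coefficient $2(2n+1)$ in $L$ absorbs the loss---addresses a different issue (copies of $M$ consumed under restriction in a dimension induction) and does nothing for the asymptotic in $m$. Even granting the surjection, comparing a degree-$n$ polynomial in $m$ to a degree-$(n-1)$ polynomial does not yield the inequality $\vol(X,K_X+D+L)\geq\vol(D,K_D+L|_D)$ in any useful sense; the two sides live in different dimensions and your subsequent inequality $\vol(D,K_D+L|_D)\geq 2^{-n}D\cdot L^{n-1}$ would still need its own argument, which the sketch does not supply.

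The paper sidesteps this completely with a telescoping trick. Rather than restricting $m(K_X+D+L)$, one multiplies by a fixed section of $2(K_X+D+(2n+1)M)$---non-empty by Lemma~\ref{l-xy}---to embed $H^0\big((2m-3)(K_X+D+L)\big)$ into $H^0\big((2m-1)(K_X+D+L)\big)$, and bounds the difference below by $h^0(D,K_D+mL|_D)$. The only vanishing needed is $H^1(X,\ring X.(K_X+pM))=0$ for $p>0$, which holds since $M$ is big and nef; no control on $K_X$ is required. The higher vanishing on $D$ forces $h^0(D,K_D+mL|_D)=\chi(D,K_D+mL|_D)$ to be a polynomial in $m$ with leading term $D\cdot L^{n-1}/(n-1)!$, and comparing leading coefficients after summing the telescoped differences produces the factor $2^n$ directly---no induction on the dimension is involved.
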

\begin{proof} Let $\nu\colon\map X'.X.$ be a proper birational morphism. Since
\[
\vol(X',K_{X'} +\nu ^{-1}_*D +\nu ^*L)\leq \vol (X,K_X+D+L),
\]
we may assume that $X$ and $D$ are smooth (in particular the components of $D$ are
disjoint).  Let $\phi =\phi _M\colon\map X.Z.$ be the induced birational morphism.  Since
\[
D\cdot L ^{n-1} =\phi ^{-1}_*\phi _* D\cdot L ^{n-1},
\] 
we may assume that no component of $D$ is contracted by $\phi$ and we may replace $D$ by
$\phi ^{-1}_*\phi _* D$.  Thus we can write $M\sim _\mathbb{Q} A+B$ where $A$ is ample,
$B\geq 0$ and $B$ and $D$ have no common components.  In particular $K_X+D+\delta B$ is
divisorially log terminal for some $\delta >0$ and so, by Kawamata-Viehweg vanishing,
\[
H^i(X,\ring X.(K_X+E+pM))=0
\]
for all $i>0$, $p>0$ and reduced divisors $E$ such that $0\leq E\leq D$.  In particular
$H^i(X,\ring D.(K_D+pM|_D))=0$ for all $i>0$, $p>0$.  Therefore there are surjective
homomorphisms
\[
\map{H^0(X,\ring X.(K_X+D+(2n+1)M))}.{H^0(D,\mathcal O_D(K_D+(2n+1)M|_D))}..
\]
By Lemma \ref{l-xy}, $|K_D+(2n+1)M|_D|$ is non-empty and so the general section of
$H^0(X,\ring X.(K_X+D+(2n+1)M))$ does not vanish along any component of $D$. It is
also easy to see that $|2K_X+D+2L|$ is non empty.  Consider the commutative diagram
\[
\begin{diagram}
\ring X.(K_X+mL+D)    &   \rTo    &    \ring D.(K_D+mL|_D) \\
\dTo &  & \dTo \\
\ring X.((2m-1)(K_X+L+D))   &   \rTo    & \ring D.((2m-1)(K_D+L|_D))
\end{diagram}
\]
whose vertical maps are induced by a general section
of 
\[
(m-1)(2K_X+L+2D)=2(m-1)(K_X+(2n+1)M+D).
\]
Since 
\[
|2K_X+2D+L|=|2(K_X+D+(2n+1)M)|
\] 
is non empty and $H^1(X,\ring X.(K_X+mL))=0$, it follows that
\begin{align*} 
&\phantom{\geq {}} h^0(X,\ring X.((2m-1)(K_X+L+D)))-h^0(X,\ring X.((2m-3)(K_X+L+D))) \\
&\geq h^0(X,\ring X.((2m-1)(K_X+L+D)))-h^0(X,\ring X.((2m-2)(K_X+L+D)+K_X+L))\\
&=\dim \im \left(\map {H^0(X,\ring X.((2m-1)(K_X+L+D)))}.{H^0(D,\ring D.((2m-1)(K_D+L|_D)))}.\right) \\
&\geq h^0(D,\ring D.((K_D+mL|_D))).
\end{align*} 
The leading coefficient of $m^n$ in
\[
h^0(X,\ring X.((2m-1)(K_X+L+D)))
\] 
is $2^n\vol (K_X+L+D)/n!$ and, by the vanishing observed above,
\[
h^0(D,\ring D.(K_D+mL|_D))=\chi(D,\ring D.(K_D+mL|_D))
\]
is a polynomial of degree $n-1$ whose leading coefficient is $D\cdot L^{n-1}/(n-1)!$.
Comparing these coefficients, one sees that
\[
\vol (X,K_X+L+D) \cdot \frac {2^n}{n!}\geq \frac {D\cdot L^{n-1}}{n!}.\qedhere
\]
\end{proof}

\begin{lemma}\label{l-xy} Let $X$ be an $n$-dimensional smooth projective variety and $M$
a Cartier divisor such that $|M|$ is base point free and $\phi _{|M|}$ is generically
finite (resp.  birational).  Then there is an open subset $U\subset X$ such that for any
$x\in U$ (resp. $x$, $y\in U$) and any $t\geq n+1$ (resp. $t\geq 2n+1$) there is a section
$H^0(X,\ring X.(K_X+tM))$ not vanishining at $x$ (resp vanishing at $y$ and not vanishing
at $x$).
\end{lemma}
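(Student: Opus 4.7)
The plan is to proceed by induction on $n = \dim X$, reducing to a general hyperplane section $H \in |M|$ passing through the marked point(s) and invoking Kawamata--Viehweg vanishing to lift sections from $H$ to $X$.

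For the base case $n = 1$: since $\phi_M$ is nonconstant, $\deg M \geq 1$, so $\deg(K_X + tM) \geq 2g - 2 + t$. For $t \geq 2 = n+1$ this is at least $2g$, hence $|K_X + tM|$ is base point free, which suffices in the generically finite case; for $t \geq 3 = 2n+1$ in the birational case it is at least $2g + 1$, so $|K_X + tM|$ is very ample and separates any two points.

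For the inductive step $n \geq 2$, let $U \subset X$ be the open locus on which $\phi_M$ is \'etale (resp. an open immersion). Given a general $x \in U$ (resp. $x, y \in U$), I would take $H \in |M|$ to be a general element among divisors containing $x$ (resp. $x$ and $y$); such an $H$ exists as the pullback of a general hyperplane in $\mathbb{P}^N$ through $\phi(x)$ (resp. through $\phi(x)$ and $\phi(y)$). A standard Bertini argument, together with the fact that $\phi_M$ is \'etale at $x$ (and $y$), ensures that $H$ is smooth at $x$ (and $y$), that $M|_H$ is base point free on $H$, and that $\phi_{M|_H}$ is again generically finite (resp. birational) onto its image. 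Using adjunction $(K_X + H)|_H = K_H$ and $H \sim M$, we obtain the short exact sequence
\[
0 \to \ring X.(K_X + (t-1)M) \to \ring X.(K_X + tM) \to \ring H.(K_H + (t-1)M|_H) \to 0.
\]
Since $M$ is big and nef, Kawamata--Viehweg vanishing yields $H^1(X, \ring X.(K_X + (t-1)M)) = 0$ for every $t \geq 2$, so the restriction map on global sections is surjective. By the inductive hypothesis applied to $(H, M|_H)$: in the generically finite case, for $t - 1 \geq n$ (i.e. $t \geq n + 1$) there is a section of $K_H + (t-1)M|_H$ not vanishing at $x$; in the birational case, for $t - 1 \geq 2(n-1) + 1 = 2n - 1$ (in particular for $t \geq 2n + 1$) there is a section vanishing at $y$ and not vanishing at $x$. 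Lifting via the surjection yields the desired section on $X$.

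The main delicacy is arranging a single open set $U \subset X$, depending only on $X$ and $M$, such that for every $x \in U$ (and every pair $x, y \in U$) we can actually choose $H$ for which the inductive hypothesis applies at $x$ (and $y$) on $H$. This is handled by a semicontinuity / dimension count on the incidence variety of pairs $(x, H) \in X \times |M|$ (resp. triples $(x, y, H)$), shrinking the open set $U$ produced by induction once; the argument is standard but must be arranged uniformly in the chosen point(s).
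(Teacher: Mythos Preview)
Your inductive argument via hyperplane sections and Kawamata--Viehweg vanishing is correct, but it is a genuinely different route from the paper's proof. The paper works directly with multiplier ideals: writing $M=\phi^*H$ for $H$ very ample on $Z=\phi(X)$, it takes $G=\frac{n}{n+1}\sum_{i=1}^{n+1}H_i$ with $H_i\in|H|$ general through $\phi(x)$ (and an analogous construction through $\phi(x)$ and $\phi(y)$ in the birational case), so that $\phi(x)$ (resp.\ $\phi(x),\phi(y)$) are isolated in the cosupport of $\mathcal J(G)$, and then applies Nadel vanishing to $\ring X.(K_X+tM)\otimes\mathcal J(\phi^*G)$ to produce the required section in one shot, with no induction. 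Your approach trades the multiplier ideal machinery for a classical Bertini-and-restrict argument; this is more elementary but costs you the bookkeeping of making $H$ globally smooth through the prescribed points and checking that $(H,M|_H)$ again satisfies the hypotheses.

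On the last paragraph: the uniformity of $U$ is actually easier than your incidence-variety sketch suggests, and it is worth making explicit. In the generically finite case, once your induction shows that $|K_X+(n+1)M|$ has a section not vanishing at \emph{some} general point, the base locus $\mathrm{Bs}|K_X+(n+1)M|$ is a proper closed subset, and you may take $U$ to be its complement; for $t>n+1$ you tensor with a section of $(t-n-1)M$ not vanishing at $x$ (which exists since $|M|$ is base point free). In the birational case, once your induction shows sections separate a general pair, the map $\phi_{K_X+(2n+1)M}$ is birational onto its image, and you may take $U$ to be the locus where it is an isomorphism; again larger $t$ follow by tensoring with sections of $|M|$. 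This disposes of the dependence of $U$ on the inductive choice of $H$ entirely.
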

\begin{proof} Let $\phi=\phi _M\colon\map X.Z.$ be the induced morphism so that
$M=\phi^*H$ where $H$ is very ample.  Let $U\subset X$ be the open subset on which $\phi$
is finite (resp. an isomorphism).  We may pick a divisor $G\sim _\mathbb{Q} \lambda H$
such that $\lambda <n+1$ (resp. $\lambda <2n+1$) and $\phi(x)$ (resp. $\phi(x)$ and
$\phi(y)$) are isolated points in the cosupport of $\mathcal{J}(G)$ (this can be achieved
by letting $G= \frac n{n+1}\sum _{i=1}^{n+1}H_i$ where $H_i\in |H|$ are general
hyperplanes containing $\phi (x)$ (resp. letting
$G= \frac n{n+1}(H_0+\sum _{i=1}^{n}(H_i+H'_i))$ where $H_i\in |H|$ are general
hyperplanes containing $\phi (x)$, $H'_i\in |H|$ are general hyperplanes containing
$\phi (y)$ and $H_0$ is a general hyperplane containing $\phi (x)$ and $\phi (y)$)).

By Nadel vanishing, $H^1(X,\ring X.(K_X+tM)\otimes \mathcal J (\phi ^* G))=0$ and so there
is a surjection
\[
\map {H^0(X,\ring X.(K_X+tM))}.{H^0(X,\ring X.(K_X+tM)\otimes \ring X./\mathcal J (\phi ^* G))}..
\]
Since $x$ (resp. $x$, $y$) is an isolated component of the support of
$\ring X./\mathcal J (\phi^*G)$, it follows that there is a section
$H^0(X,\ring X.(K_X+tM))$ not vanishining at $x$ (resp. vanishing at $y$ and not vanishing
at $x$).
\end{proof}

\section{Pairs with hyperstandard coefficients}

\subsection{The DCC for volumes of log birationally bounded pairs}

In this section, we will show that for pairs which are log birationally bounded, if the
coefficients satisfy the DCC, then the volumes satisfy the DCC.

Let $(Z,D)$ be a simple normal crossings pair with the obvious toroidal structure.  Let
$\mathbf{B}$ be an effective $b$-divisor.  If $\llist Y.m.$ are finitely many toroidal
models over $Z$, and $\map Z'.Z.$ is a proper birational morphism factoring through each
$Y_i$, then we define the corresponding {\bf cut} $(Z',\mathbf{B} ')$ of $(Z,\mathbf{B} )$
to be
\[
\mathbf{B}'=\mathbf{B}\wedge\mathbf{M}_{\Theta}\qquad \text{where}\qquad B_i=\mathbf{L} _{\mathbf{B}_{Y_i},Z'}\quad \text{and}\quad \Theta=\wedge^m_{i=1} B_i.
\]
Notice that the coefficients of $\mathbf{B} '$ belong to the set
$I'=I\cup \coeff(\mathbf{B}' _{Z'})$, which also satisfies the DCC.  If $Z$ is projective
then it follows easily from Lemma \ref{l-volumes} that
$\vol(Z',K_{Z'}+\mathbf{B} '_{Z'})=\vol(Z',K_{Z'}+\mathbf{B} _{Z'})$.  We say that
$(Z',\mathbf{B} ')$ is a {\bf reduction} of $(Z,\mathbf{B})$ if it is obtained by a finite
sequence $(Z_i,\mathbf{B}_i)$, $i=0, 1,\ldots, k$, where
$(Z_0,\mathbf{B} _0)=(Z,\mathbf{B})$, $ (Z_i ,\mathbf{B} _i)$ is a cut of
$(Z_{i-1} ,\mathbf{B} _{i-1})$ for $i=1,\ldots , k$ and
$ (Z_k ,\mathbf{B} _k)=(Z',\mathbf{B} ')$.

\begin{lemma}\label{l-stable} Let $(Z,D)$ be a simple normal crossings pair and
$\mathbf{B}$ a $b$-divisor whose coefficients are in a DCC set contained in $[0,1]$ such
that $\Supp (\mathbf{B}_Z)\subset \Supp(D)$, then there exists a reduction
$(Z',\mathbf{B} ')$ of $(Z,\mathbf{B})$ that
\[
\mathbf{B}'\ge \mathbf{L}_{\mathbf{B}'_{Z'}}.
\]
\end{lemma}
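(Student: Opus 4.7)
The strategy is to construct the reduction iteratively, with termination forced by the DCC hypothesis on the coefficients. The fundamental observation is that a cut $\mathbf{B}' = \mathbf{B} \wedge \mathbf{M}_{\Theta}$ has a very restricted effect: since the coefficients of $\mathbf{B}$ lie in $[0,1]$, for any prime divisor $E$ exceptional over $Z'$ we have $\mathbf{M}_{\Theta}(E) = 1 \ge \mathbf{B}(E)$, so $\mathbf{B}'(E) = \mathbf{B}(E)$. Only the trace on $Z'$ is altered, becoming $\mathbf{B}'_{Z'} = \mathbf{B}_{Z'} \wedge \Theta$. Hence cuts weakly decrease traces while leaving the coefficients of $\mathbf{B}$ on higher divisors untouched, and the process of iterating cuts produces a decreasing sequence of traces computed on increasingly fine toroidal models.

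The plan is to set $(Z_0, \mathbf{B}_0) = (Z, \mathbf{B})$ and proceed inductively. At stage $k$, if $\mathbf{B}_k \ge \mathbf{L}_{\mathbf{B}_{k, Z_k}}$, we stop. Otherwise there is a prime divisor $E$ over $Z_k$ with $-a_E(Z_k, \mathbf{B}_{k, Z_k}) > \mathbf{B}_k(E) \ge 0$; that is, the log pullback of $(Z_k, \mathbf{B}_{k, Z_k})$ outweighs the b-divisor $\mathbf{B}_k$ at $E$. Choose a toroidal modification $Y_{k+1} \to Z_k$ on which $E$ appears as a divisor, and let $Z_{k+1}$ be a common toroidal refinement of $Y_{k+1}$ and $Z_k$. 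Taking $Y_1 = Z_k, Y_2 = Y_{k+1}$, the formula for $\Theta$ yields $\Theta(E) = \mathbf{L}_{\mathbf{B}_{k, Z_k}}(E) \wedge \mathbf{B}_k(E) = \mathbf{B}_k(E)$, so after the cut the trace of $\mathbf{B}_{k+1}$ at $E$ is exactly $\mathbf{B}(E)$; simultaneously the trace coefficients on strict transforms of components of $\mathbf{B}_{k, Z_k}$ may strictly decrease to their log-pullback values along $Y_{k+1}$, which in turn raises $a_F(Z_{k+1}, \mathbf{B}_{k+1, Z_{k+1}})$ for divisors $F$ above $E$ and shrinks the right-hand side of the desired inequality.

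The main obstacle is proving termination. The key input is that the coefficient set of each trace $\mathbf{B}_{k, Z_k}$ remains contained in a fixed DCC subset of $[0,1]$: by the remark after the definition of cut, each cut adds only the new coefficients of its trace to the coefficient set, and these stay in a DCC set throughout. Suppose for contradiction the procedure never terminates. Since on a fixed toroidal model only finitely many prime divisors are supported in $D$ (and its iterated transforms), one passes to a subsequence where the "problematic" divisors $E$ track a single sequence of valuations stabilizing in the Riemann--Zariski setting; along this subsequence the trace coefficients extracted at each stage form a strictly decreasing sequence (each cut strictly reduces some trace coefficient), contradicting DCC. This termination step is the technical heart of the proof and is where the DCC hypothesis is used in an essential way.

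Once termination is established at some stage $N$, the pair $(Z_N, \mathbf{B}_N)$ is by construction a reduction of $(Z, \mathbf{B})$ --- it is obtained by finitely many successive cuts --- and the stopping criterion gives $\mathbf{B}_N \ge \mathbf{L}_{\mathbf{B}_{N, Z_N}}$, as required.
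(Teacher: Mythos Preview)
Your observation that a cut leaves $\mathbf{B}$ unchanged on divisors exceptional over $Z'$ and only modifies the trace is correct and is indeed the starting point.  However, the termination argument has a genuine gap.

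You extract one bad divisor $E$ at a time and then assert that, if the process does not stop, one can pass to a subsequence along which ``the trace coefficients extracted at each stage form a strictly decreasing sequence.''  No such sequence is produced.  At stage $k$ you record the value $\mathbf{B}(E_k)$, but these numbers lie in the DCC set $I$ with no ordering imposed: nothing forces $\mathbf{B}(E_{k+1}) < \mathbf{B}(E_k)$.  Your parenthetical justification, that ``each cut strictly reduces some trace coefficient,'' is true but irrelevant---the model $Z_k$ changes at every step, so you are reducing coefficients of \emph{different} divisors on \emph{different} models, and no single decreasing chain in $I$ emerges.  The appeal to Riemann--Zariski stabilisation is too vague to repair this.

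The paper's proof avoids this by introducing a genuinely decreasing integer invariant: the maximal codimension $k$ of a stratum of $\lfloor \mathbf{B}_Z\rfloor$ containing the centre of a bad valuation.  The heart of the argument is then an explicit local toric computation on $\mathbb{C}^n$.  The DCC hypothesis is used not to bound the number of steps directly, but in a single, sharp way: among all toric valuations $\sigma=(*,\dots,*,v)$ with a fixed ``tail'' $v$, one may choose $\sigma(v)$ \emph{minimising} $\mathbf{B}(\sigma)$.  Extracting these finitely many minimisers (one per $v$ in a finite set $V_0$) simultaneously produces a single cut after which \emph{every} valuation with that tail satisfies $\mathbf{L}_{\mathbf{B}'_{Z'}}(\tau)\le \mathbf{B}(\sigma(v))\le \mathbf{B}(\tau)$, so $k$ drops.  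Your one-at-a-time extraction, without this minimisation, can repeatedly fix one valuation while leaving infinitely many others in the same family bad; the example of $\mathbf{B}_Z=E_1+E_2$ on $\mathbb{C}^2$ with $\mathbf{B}((n,1))=1-1/(n+1)$ already shows that extracting a non-minimal $\sigma$ need not fix $\sigma_1=(1,1)$.  You need either the codimension invariant and the minimisation trick, or a genuinely different termination mechanism.
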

\begin{proof} If $\mathbf{B} \ge \mathbf{L} _{\mathbf{B} _Z}$, then there is nothing to
prove. Suppose now that for any divisorial valuation $\nu $ such that
$\mathbf{B} (\nu) < \mathbf{L} _{\mathbf{B} _Z}(\nu)$, the centre of $\nu$ is not
contained in any strata of $\lfloor \mathbf{B}_Z \rfloor $.  Let $\map Z'.Z.$ be a finite
sequence of blow ups along strata of $\{ \mathbf{B} _Z\}$ such that
$(Z',\{\mathbf{L} _{\mathbf{B} _Z,Z'}\})$ is terminal and let
$\mathbf{B}'=\mathbf{B} \wedge \mathbf{M} _{\mathbf{L} _{\mathbf{B} _Z,Z'}}$, that is,
$\mathbf{B}'$ is the cut of $\mathbf{B}$ with respect to $\map Z'.Z.$.  Let $\nu$ be a
valuation such that $\mathbf{B} ' (\nu) < \mathbf{L} _{\mathbf{B} ' _{Z'}}(\nu)$.  Since
$\mathbf{B}' _{Z'}= \mathbf{L} _{\mathbf{B} '_{Z'}, Z'}$, the centre of $\nu$ is not a
divisor on $Z'$.  But then
\[
\mathbf{B} (\nu )=\mathbf{B}'(\nu)< \mathbf{L} _{\mathbf{B} ' _{Z'}}(\nu)\leq \mathbf{L}_{\mathbf{B} _Z}(\nu)
\]
and so the centre of $\nu$ is not contained in any strata of
$\lfloor \mathbf{B}_{Z'} ' \rfloor $ (as the strata of
$\lfloor \mathbf{B}_{Z'} ' \rfloor $ map to strata of $\lfloor \mathbf{B}_Z \rfloor $).
But then, since $(Z',\{ \mathbf{B} '_{Z'}\})$ is terminal
$\mathbf{L} _{\mathbf{B} ' _{Z'}}(\nu)=0$ which is impossible as $\mathbf{B}\geq 0$.

We may therefore assume that there is a divisorial valuation $\nu$ with centre contained
in a stratum of $\lfloor \mathbf{B}_Z\rfloor$.  Let $k=k(Z,\mathbf{B} )$ be the maximal
codimension of such a stratum. We will prove the statement by induction on $k\geq 1$. It
suffices to show that there is a cut $(Z',\mathbf{B} ')$ of $(Z,\mathbf{B})$ such that
$k(Z',\mathbf{B} ')<k(Z,\mathbf{B} )$.  Since $\lfloor \mathbf{B}_Z\rfloor$ has finitely
many strata, we may work locally around each stratum.  We may therefore assume that
$Z=\mathbb{C}^n $ and
\[
\mathbf{B} _Z=E_1+\ldots +E_k+\sum _{i=k+1}^na_i E_i
\]
where $0\leq a_i<1$ and the $E_i$ are the coordinate hyperplanes.  The divisors
$\llist E.n.$ correspond to vectors $\llist e.n.$ and the valuations we consider
correspond to the non zero integral prime vectors with non negative coefficients.  Let $E$
be a toric valuation corresponding to a vector $\sum^n_{j=1}b_je_j$ for
$b_j\in \mathbb{N}$.  By standard toric geometry the coefficient of $E$ in
$\mathbf{L}_{\mathbf{B}_Z}$ is
\begin{eqnarray}\label{logdis}
\left( 1-\sum^n_{j=k+1}(1-a_j)b_j \right) \vee 0.
\end{eqnarray}

By equation \ref{logdis}, there is a finite set $V_0\subset \mathbb{N} ^{n-k}$, such that
if $(b_{k+1},\ldots ,b_n)\notin V_0$, then any divisor $E$ corresponding to a vector
\[
(*,...,*, b_{k+1},\ldots ,b_n)
\] 
where the first $k$ entries are arbitrary, satisfies $\mathbf{L}_{\mathbf{B}_Z}(E)= 0$.

In what follows below, by abuse of notation, we will use $\sigma$ to denote both an
integral prime vector in $\mathbb{N}^n$ and the corresponding toric valuation.  Fix
$v\in V_0$, among all valuations of the form $\sigma =(*,...,*, v)$, we consider
$\sigma =\sigma (v)$ such that $\mathbf{B}(\sigma )$ is minimal (this is possible since
the coefficients of $\mathbf{B}$ belong to a DCC set).  We pick a toroidal log resolution
$\map Z'.Z.$ such that for any $v\in V_0$, and any $\sigma =\sigma (v)$ as above, the
induced rational map $\map Z'.Y_\sigma.$ is a morphism (where $\map Y_\sigma.Z.$ is the
toroidal morphism with exceptional divisor $E_\sigma$ such that $\rho(Y_{\sigma}/Z)=1$).
Let $(Z',\mathbf{B}')$ be the cut corresponding to $\{ Y_{\sigma}\}_{v\in V_0}$.

We claim that $k(Z',\mathbf{B}')<k(Z,\mathbf{B} )=k$.  Suppose to the contrary that there
exists a stratum of $\mathbf{B}'_{Z'}$ of codimension $k$ containing the centre of a
valuation $\nu$ such that $\mathbf{B}'(\nu )<\mathbf{L} _{\mathbf{B}' _{Z'}}(\nu )$.
Clearly $\nu$ is exceptional over $Z'$ and $\nu$ is a toric valuation corresponding to a
vector $\tau =(*,\ldots , *, b_{k+1},\ldots , b_n)$ for some
$v=(b_{k+1},\ldots , b_n)\in V_0$.  Let $\sigma =\sigma (v)$ be the valuation defined
above.  Then $\map Z'.Z.$ factors through $Y_\sigma$.  The toric morphism
$\map Y_\sigma.Z.$ corresponds to subdividing the cone given by the basis vectors
$\llist e.n.$ in to $m\leq n$ cones spanned by $\sigma$ and
$e_1,\ldots , e_{l-1}, e_{l+1},\ldots , e_n$.  Since $\tau$ belongs to one of these cones,
we may write
\[
\tau =\lambda \sigma +\sum _{i\ne l}\lambda _i e_i,\qquad \lambda , \lambda _i\in \mathbb{Q} _{\geq 0}.
\]
Since $\lfloor \mathbf{B} '_{Z'}\rfloor$ has a codimension $k$ stratum containing the
centre of $\tau$, the same is true for $(\mathbf{L}_{\mathbf{B}})_{Y_\sigma}$.  Since
$\mathbf{L} _{\mathbf{B}_Z} (\sigma )<1$, it follows that $b_l\neq 0$ for some
$k+1\leq l\leq n$.  Therefore $\tau$ belongs to a cone spanned by $\sigma$ and
$\{e_i\}_{i\ne l}$ where $k+1\leq l\leq n$.  Since the last $n-k$ coordinates of $\sigma$
and $\tau$ are identical, it follows that $\lambda =1$ and hence $\tau \geq \sigma$ (in
the sense that $\tau -\sigma =\sum _{i\ne l}\lambda _i e_i$ where $\lambda _i\geq 0$).

It then follows that 
\begin{align*} 
\mathbf{L} _{\mathbf{B} '_{Z'}}(\tau)&\leq {\mathbf{L}_{\mathbf{B}_{Y_\sigma}}}(\tau)\\
                                  &\leq {\mathbf{L}_{\mathbf{B} _{Y_\sigma}}}(\sigma)\\
                                  &\leq \mathbf{B}(\sigma )\\
                                  &\leq \mathbf{B}(\tau)\\
                                  &=\mathbf{B}'(\tau ),
\end{align*} 
where the first inequality follows from the definition of $\mathbf{B} '$, the second as
$\tau \leq \sigma$, the third as $\sigma$ is a divisor on $\mathbf{B} _{Y_\sigma}$, the
fourth by our choice of $\sigma$, and the fifth by the definition of $\mathbf{B} '$ and
the fact that $\tau$ is exceptional over $Z'$.
\end{proof}

The following theorem is \cite[5.1]{HMX10}. 
\begin{theorem}\label{t-voldcc} Fix a set $I\subset [0,1]$ which satisfies the DCC and $(Z,D)$ 
a simple normal crossing pair where $D$ is reduced.  Consider the set $\mathfrak{D}$ of
all projective simple normal crossings pairs $(X,B)$ such that $\coeff(B)\subset I$,
$f\colon\map X.Z.$ is a birational morphism and $f_*B\leq D$.  Then the set
\[
\{\,\vol(X,K_X+B)\,|\,(X,B)\in \mathfrak{D}\,\}
\]
satisfies the DCC.
\end{theorem}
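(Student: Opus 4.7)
The plan is to argue by contradiction. Suppose there is a sequence $(X_k,B_k)\in\mathfrak{D}$ with $v_k:=\vol(X_k,K_{X_k}+B_k)$ strictly decreasing, where $f_k\colon X_k\to Z$ denotes the given birational morphism.

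For each $k$ I would associate a b-divisor $\mathbf{B}^{(k)}$ over $Z$ determined by the trace $\mathbf{B}^{(k)}_{X_k}=B_k$. Since $(X_k,B_k)$ is log canonical (being SNC with coefficients in $I\subset [0,1]$), Lemma~\ref{l-volumes}(2) gives
\[
v_k \;=\;\vol(W,K_W+\mathbf{L}_{B_k,W})\;=\;\vol(W,K_W+\mathbf{M}_{B_k,W})
\]
on any model $W\to X_k$, so $v_k$ is intrinsic to the b-divisor $\mathbf{B}^{(k)}$. The coefficients of $\mathbf{B}^{(k)}$ on any toroidal blow-up of $(Z,D)$ lie in a single DCC set, obtained from $I$ together with the discrepancy-type contributions described by formula~\eqref{logdis}.

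Next I would apply Lemma~\ref{l-stable} to each $(Z,\mathbf{B}^{(k)})$ to produce a reduction $(Z^{(k)},\mathbf{B}'^{(k)})$ with $\mathbf{B}'^{(k)}\geq\mathbf{L}_{\mathbf{B}'^{(k)}_{Z^{(k)}}}$ and $\vol(Z^{(k)},K_{Z^{(k)}}+\mathbf{B}'^{(k)}_{Z^{(k)}})=v_k$. This inequality is exactly what lets us read off the volume from a single trace, since it forces $\mathbf{B}'^{(k)}$ to behave like the log b-divisor of the pair $(Z^{(k)},\mathbf{B}'^{(k)}_{Z^{(k)}})$ and hence interact correctly with Lemma~\ref{l-volumes}. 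After passing to a subsequence I would then produce a single toroidal model $Z'\to Z$ dominating (the relevant part of) every $Z^{(k)}$, arranged so that the coefficients of $\mathbf{B}'^{(k)}_{Z'}$ are non-decreasing in $k$ along each prime divisor and converge to an effective limit $\Theta$ on $Z'$. Continuity of the volume function \cite[2.2.45]{Lazarsfeld04a} combined with its monotonicity in an effective summand then forces $v_k$ to be non-decreasing along the subsequence, contradicting strict decrease.

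The main obstacle is the extraction of the uniform toroidal model $Z'$ and the corresponding monotone subsequence: a priori there are infinitely many toroidal valuations to control and the reduction $(Z^{(k)},\mathbf{B}'^{(k)})$ depends on $k$. Formula~\eqref{logdis} is what bridges the gap, since it shows that along a toric valuation $\sigma=\sum b_j e_j$ only finitely many of the input coefficients of $\mathbf{B}^{(k)}_Z$ enter the value of $\mathbf{L}_{\mathbf{B}^{(k)}_Z}(\sigma)$; this lets the DCC hypothesis on $I$ propagate to a DCC structure on the coefficients of $\mathbf{B}^{(k)}$ valuation by valuation, and a diagonal/Dickson-type extraction then produces the required monotone subsequence, at which point the volume continuity argument yields the contradiction.
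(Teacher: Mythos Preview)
Your overall architecture---contradiction, b-divisors, Lemma~\ref{l-stable}, a diagonal extraction---matches the paper's, but the order of operations is wrong in a way that creates a real gap. You apply Lemma~\ref{l-stable} to \emph{each} $\mathbf{B}^{(k)}$ separately, obtaining models $Z^{(k)}$ that depend on $k$, and then propose to find a single toroidal $Z'$ dominating all of them. There is no reason the $Z^{(k)}$ should be bounded: the reduction produced by Lemma~\ref{l-stable} depends on the particular b-divisor, and its complexity (the number of toroidal blow-ups needed) can grow without bound as $k$ varies. Formula~\eqref{logdis} does not help here; it is used \emph{inside} the proof of Lemma~\ref{l-stable} to control log discrepancies, not to bound the output model. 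The paper avoids this problem by reversing the steps: first reduce to toroidal $f_i\colon X_i\to Z$, then diagonalise so that $\mathbf{M}_{B_i}(E)$ is eventually non-decreasing for every toroidal valuation $E$, form the \emph{limit} b-divisor $\mathbf{B}=\lim\mathbf{M}_{B_i}$, and apply Lemma~\ref{l-stable} \emph{once} to $\mathbf{B}$. This yields a single $Z'$ with no uniformity problem.

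Two further points. First, you skip the initial reduction to toroidal morphisms; without it the equality $\vol(Z^{(k)},K_{Z^{(k)}}+\mathbf{B}^{(k)}_{Z^{(k)}})=v_k$ is not justified, since $Z^{(k)}$ need not dominate $X_k$ and Lemma~\ref{l-volumes}(4) only gives $\geq$ in general. Second, your endgame (``continuity plus monotonicity forces $v_k$ non-decreasing'') is too loose: having the traces $\mathbf{B}'^{(k)}_{Z'}$ increase only gives an increasing \emph{upper} bound for $v_k$. The paper's contradiction instead uses the inequality $\mathbf{B}'\geq\mathbf{L}_{\mathbf{B}'_{Z'}}$ to squeeze: an $\epsilon$-shrink $(1-\epsilon)\mathbf{B}'_{Z'}$ still has volume exceeding $v_j$ for large $j$, and because $(Z',(1-\epsilon)\mathbf{B}'_{Z'})$ is now klt one can extract all its negative-discrepancy divisors on a fixed $Z''$, after which $\mathbf{M}_{B_j}\geq\mathbf{L}_{(1-\epsilon)\mathbf{B}'_{Z'}}$ on $X_j$ gives the reverse inequality.
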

\begin{proof} Suppose that $(X_i,B_i)$ is an infinite sequence of pairs in $\mathfrak{D}$
such that $\vol(X_i,K_{X_i}+B_i)$ is a strictly decreasing sequence.  Let
$f_i\colon\map X_i.Z.$ be the induced morphisms such that $\Supp(f_{i*}B_i)\subset D$.  By
Lemma \ref{l-volumes}, we know that
\[
\vol(X_i,K_{X_i}+B_i)\leq \vol (Z,K_Z+f_{i*}B_i)
\]
and if this inequality is strict then
\[
\mult_E(K_{X_i}+B_i)<\mult_Ef_i^*(K_Z+f_{i*}B_i)
\]
for some divisor $E$ contained in $\Exc(f_i)$.  In this case $E$ must define a toroidal
valuation with respect to the toroidal structure associated to $(Z,D)$.  Let
$f_i'\colon\map X_i'.Z.$ be a toroidal morphism such that if $E$ is a divisor on $X_i$
corresponding to a toroidal valuation for $(Z,D)$, then $E$ is a divisor on $X_i'$.  Let
$B_i'$ be the strict transform of $B_i$ plus the $\rmap X'_i.X_i.$ exceptional divisors.
By Lemma \ref{l-volumes}, we have
\[
\vol(X_i,K_{X_i}+B_i)=\vol (X'_i,K_{X'_i}+B'_i)
\]
and the coefficients of $B_i'$ are contained in $I\cup \{1\}$.  Replacing $(X_i,B_i)$ by
$(X_i',B_i')$, we may assume that $f_i\colon\map (X_i,B_i).(Z,D).$ are toroidal morphisms.

For each pair $(X_i,B_i)$, consider the $b$-divisors $\mathbf{M} _{B_i}$.  Since there are
only countably many toroidal valuations over $(X,D)$, by a standard diagonalisation
argument, after passing to a subsequence, we may assume that for any toroidal divisor $E$
over $X$, the sequence $\mathbf{M}_{B_i}(E)$ is eventually non decreasing.  Note that for
any non-toroidal divisor $E$ exceptional over $Z$, we have
$\mathbf{M}_{B_i}(E)=1$.  Therefore the limit $\lim \mathbf{M} _{B_i}(E)$ exists for any
divisor $E$ over $Z$.  We let $\mathbf{B}=\lim \mathbf{M} _{B_i}$ so that
$\mathbf{B}(E)=\lim \mathbf{M} _{B_i}(E)$ for any divisor $E$ over $Z$.

By Lemma \ref{l-stable} there is a reduction $(\mathbf{B}',Z')$ of $(\mathbf{B}, Z)$
such that $\mathbf{B} '\geq \mathbf{L} _{\mathbf{B} '_{Z'}}$ and
\[
\vol(Z',K_{Z'}+\mathbf{B}'_{Z'})=\vol (Z',K_{Z'}+\mathbf{B}_{Z'}).
\]

Since for any divisor $E$ over $Z$, the sequence $\mathbf{B}_i(E)$, $i=1,\dots $ is
eventually non decreasing, we may assume that $\mathbf{M} _{B_i,Z'}\leq \mathbf{B}_{Z'}$.
Therefore, we have
\begin{align*} 
\vol (X_i,K_{X_i}+B_i) &\leq \vol (Z',K_{Z'}+\mathbf{M} _{B_i,Z'})\\
                      &\leq \vol (Z',K_{Z'}+\mathbf{B}_{Z'}) \\
                      &=\vol(Z',K_{Z'}+\mathbf{B}'_{Z'}).
\end{align*} 

If $\vol(X_i,K_{X_i}+B_i)$, $i=1, \dots$ is strictly decreasing then there exists a
constant $\epsilon>0$, such that for any $j>{i+1}$,
\begin{align*} 
\vol (X_j,K_{X_j}+B_j)&<  \vol (X_{i+1},K_{X_{i+1}}+B_{i+1})\\
                     &\leq \vol (Z',K_{Z'}+(1-\epsilon)\mathbf{B}'_{Z'}).
\end{align*} 

Let $\map Z''.Z'.$ be a toroidal morphism which extracts all divisors $E$ with
$a(E, {Z'},(1-\epsilon)\mathbf{B}'_{Z'} )<0$.  Then
\[
\mathbf{M}_{B_j,Z''} \geq  \mathbf{L}_{(1-\epsilon)\mathbf{B}'_{Z'},Z''}
\]
for all $j\gg 0$, which then implies
\[
\mathbf{M}_{B_j,X_j} \geq \mathbf{L}_{(1-\epsilon)\mathbf{B}'_{Z'},X_j}
\]
as for any exceptional divisor $E$ of $X_j/Z''$, we know that
$\mathbf{L}_{(1-\epsilon)\mathbf{B}'_{Z'}}(E)=0$.  But then
\begin{align*} 
\vol(X_j,K_{X_j}+B_j)&\geq \vol (X_j,K_{X_j}+ \mathbf{L}_{(1-\epsilon)\mathbf{B}'_{Z'},X_j} )\\
                    &=\vol (Z'',K_{Z''}+\mathbf{L}_{(1-\epsilon)\mathbf{B}'_{Z'},Z''})
\end{align*} 
which is the required contradiction.
\end{proof}

\begin{corollary}\label{c-voldcc}\cite[1.9]{HMX10} Fix a set $I\subset [0,1]$ which satisfies the DCC  
and a log birationally bounded set $\mathfrak{B}_0$ of log canonical pairs $(X,B)$ such
that $\coeff(B)\subset I$.  Then the set
\[
\{\, \vol (X,K_X+B) \,|\, (X,B)\in \mathfrak{B}_0 \,\}
\]
satisfies the DCC.
\end{corollary}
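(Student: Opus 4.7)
The plan is to reduce to Theorem \ref{t-voldcc}, which establishes DCC for volumes of pairs mapping birationally to a \emph{fixed} simple normal crossings pair, by performing Noetherian and resolution reductions on the bounding family produced by the log birational boundedness hypothesis.

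First I would argue by contradiction: suppose there is an infinite sequence $\{(X_i,B_i)\}\subset\mathfrak{B}_0$ with $\vol(X_i,K_{X_i}+B_i)$ strictly decreasing. Log birational boundedness furnishes a projective morphism $\pi\colon Z\to T$ with $T$ of finite type and a reduced divisor $D$ on $Z$ such that, for each $i$, there is a closed point $t_i\in T$ and a birational map $f_i\colon Z_{t_i}\dashrightarrow X_i$ with $\Supp D_{t_i}$ containing both the strict transform of $B_i$ and the $f_i$-exceptional divisors.  Since $T$ has finitely many irreducible components, after passing to a subsequence we may assume all $t_i$ lie in a single one.  By resolution of singularities and Noetherian induction on $\dim T$ (recursing on the closed complement of any open we shrink to, via the restriction of $\mathfrak{B}_0$ to that complement), we may assume that $T$ is smooth, that $Z$ is smooth, that $D$ has simple normal crossings, and that $(Z,D)\to T$ is log smooth; in particular every fibre $(Z_t,D_t)$ is a projective simple normal crossings pair, and the components of $D$ may be arranged to restrict to irreducible divisors on each fibre, so that the dual complex of $(Z_t,D_t)$ is independent of $t$.

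Next I would reduce the birational maps $f_i$ to birational morphisms.  For each $i$ pick a common log resolution $\pi_i\colon W_i\to X_i$, $g_i\colon W_i\to Z_{t_i}$, with $W_i$ smooth, and set
\[
C_i=(\pi_i^{-1})_\ast B_i+\Exc(\pi_i)_{\rm red}.
\]
Since $(X_i,B_i)$ is log canonical, all $\pi_i$-discrepancies are at least $-1$, so $K_{W_i}+C_i-\pi_i^\ast(K_{X_i}+B_i)$ is effective and $\pi_i$-exceptional; Lemma \ref{l-volumes}(2) then gives $\vol(W_i,K_{W_i}+C_i)=\vol(X_i,K_{X_i}+B_i)$.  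The coefficients of $C_i$ lie in the DCC set $I\cup\{1\}$.  Moreover $g_{i\ast}C_i\leq D_{t_i}$: the pushforward of the strict transform equals the strict transform under $f_i^{-1}$, which lies in $D_{t_i}$ by hypothesis; each $\pi_i$-exceptional divisor is either $g_i$-exceptional (and contributes nothing under pushforward) or maps onto a divisor on $Z_{t_i}$ that is $f_i$-exceptional and hence in $D_{t_i}$.

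Finally I would reduce to a single ambient pair and apply Theorem \ref{t-voldcc}.  Fix $0\in T$ and use $(Z_0,D_0)$ as the reference simple normal crossings pair.  The toroidal valuations and strata of $(Z_t,D_t)$ are indexed by the common dual complex, so the b-divisors $\mathbf{M}_{C_i}$ on the varying $Z_{t_i}$ can be viewed as b-divisors on $(Z_0,D_0)$ by the combinatorial identification supplied by the log smooth family.  Under this identification, the pairs $(W_i,C_i)\to Z_0$ form a sequence of projective SNC pairs with coefficients in the DCC set $I\cup\{1\}$ and pushforward bounded by $D_0$, exactly as in the hypothesis of Theorem \ref{t-voldcc}, and their volumes agree with $\vol(X_i,K_{X_i}+B_i)$.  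Applying Theorem \ref{t-voldcc} contradicts the assumed strict decrease.  The main obstacle is this last step: making rigorous the transfer of b-divisorial data across fibres.  The cleanest way is to observe that the entire argument in the proof of Theorem \ref{t-voldcc} — extracting a limit b-divisor, stabilising it via Lemma \ref{l-stable}, and concluding by toroidal combinatorics — depends only on the shared combinatorial structure of the log smooth family, so the proof extends uniformly to this family setting.
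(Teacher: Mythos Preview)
Your reduction to a log smooth family $(Z,D)\to T$ and the replacement of $(X_i,B_i)$ by $(W_i,C_i)$ with $g_i\colon W_i\to Z_{t_i}$ a morphism matches the paper's opening moves. The gap is in your final transfer to a fixed fibre.

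You propose to carry the b-divisors $\mathbf{M}_{C_i}$ from $Z_{t_i}$ to $Z_0$ via the ``combinatorial identification'' of dual complexes, and then assert that the proof of Theorem \ref{t-voldcc} depends only on this combinatorial data. This is not correct: volume is not a combinatorial invariant of the toroidal data. Even after one reduces (inside the proof of Theorem \ref{t-voldcc}) to toroidal morphisms and extracts a limit b-divisor, every volume inequality there is an application of Lemma \ref{l-volumes}, which is a geometric statement about a specific projective variety. Two fibres $Z_{t_i}$ and $Z_0$ share a dual complex, but there is no birational map between them and no a priori reason for the volumes of combinatorially matched toroidal pairs to agree.

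The paper supplies the two missing ingredients. First, one further reduces so that each $g_i\colon W_i\to Z_{t_i}$ is a sequence of blow-ups along strata of $\mathbf{M}_{D_{t_i}}$: the divisors on $W_i$ with negative discrepancy for $(Z_{t_i},(g_i)_*C_i)$ are toroidal, one extracts them by strata blow-ups, and Lemma \ref{l-volumes}(4) preserves the volume. Because $(Z,D)$ is log smooth over $T$ with geometrically irreducible strata, such a strata blow-up sequence spreads to a morphism $Z'\to Z$ over $T$ with $Z'_{t_i}\cong W_i$, and one chooses a divisor $\Phi$ on $Z'$, supported on the strict transform of $D$ plus the exceptionals, with $\Phi_{t_i}=C_i$. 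Second, and decisively, one invokes deformation invariance of log plurigenera (Theorem \ref{t-definv}) to get
\[
\vol(W_i,K_{W_i}+C_i)=\vol(Z'_{t_i},K_{Z'_{t_i}}+\Phi_{t_i})=\vol(Z'_0,K_{Z'_0}+\Phi_0).
\]
Only now does one have pairs genuinely mapping to the fixed simple normal crossings pair $(Z_0,D_0)$, and Theorem \ref{t-voldcc} applies. Your outline needs both the reduction to strata blow-ups (so the construction spreads in the family) and the appeal to invariance of plurigenera (so the volume is actually constant across fibres); the identification of dual complexes is not a substitute for either.
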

\begin{proof} We may assume that $1\in I$.  Since $\mathfrak{B}_0$ is log birationally
bounded, there exists a projective morphism $\map Z.T.$ where $T$ is of finite type and a
log pair $(Z,D)$ such that for any $(X,B)\in \mathfrak{B}_0$, there is a birational
morphism $f\colon\rmap X.Z_t.$ such that the support of $D_t$ contains the support of
$f_*B$ and the $\rmap Z_t.X.$ exceptional divisors.  Let $\nu\colon\map X'.X.$ be a
resolution such that $\map X'.Z_t.$ is a morphism.  If $B'=\mathbf{M} _{B,X'}$, then
$\vol(X',K_{X'}+B')=\vol (X,K_X+B)$ (cf. Lemma \ref{l-volumes}).  Replacing $(X,B)$ by
$(X',B')$ we may assume that $f$ is a morphism.  By a standard argument, after replacing
$T$ by a finite cover and $X$ by the corresponding fibre product, we may assume that $T$
is smooth (and possibly reducible), $(Z,D)$ is log smooth over $T$, the strata of $(Z,D)$
are geometrically irreducible over $T$.

For any birational morphism $f\colon\map X.Z_t.$ as above, consider the finite set
$\mathcal E$ of divisors $E$ on $X$ such that $a_E(Z_t,f_* B)<0$.  Since $(Z_t,D_t)$ is
log smooth, there exists a finite sequence of blow ups along strata of $\mathbf{M}_{D_t}$
say $\map X'.Z_t.$ such that the divisors in $\mathcal E$ are not $\rmap X.X'.$
exceptional.  Let $p\colon\map W.X.$ and $q\colon\map W.X'.$ be a common resolution, then
by Lemma \ref{l-volumes}
\[
\vol (X,K_X+B)=\vol (W,K_{W}+\mathbf{M}_{B,W})=\vol (X',K_{X'}+B')
\]
where $B'=\mathbf{M} _{B,X'}$.  Hence, replacing $(X,B)$ by $(X',B')$ we may assume that
each $f\colon\map X.Z_t.$ is induced by a finite sequence of blow ups along strata of
$(Z_t,D_t)$.  Notice that since $(Z,D)$ is log smooth over $T$, there is a sequence of
blow ups along strata of $(Z,D)$ say $\map Z'.Z.$ such that $Z'_t \cong X'$.  Let $\Phi$
be the divisor supported on the strict transform of $D$ and the $\map Z'.Z.$ exceptional
divisors such that $\Phi _t=B'$.  Fix a closed point $0\in T$, then by Theorem
\ref{t-definv},
\[
\vol (X,K_X+B)=\vol (Z'_0,K_{Z'_0}+\Phi_0).
\]
By Theorem \ref{t-voldcc}, the set of these volumes satisfies the DCC.
\end{proof}

\begin{theorem}\label{t-volddc1}\cite[3.5.2]{HMX14} Fix $n\in \mathbb{N}$, $M>0$, and a  set 
$I\subset [0,1]$ which satisfies the DCC.  Suppose that $\mathfrak{B}_0$ is a set of log
canonical pairs $(X,B)$ such that
\begin{enumerate}
\item $X$ is projective of dimension $n$,
\item $\coeff(B)\subset I$, and
\item there exists an integer $k>0$ such that $\phi _{k(K_X+B)}$ is birational and
$\vol(X,k(K_X+B))\leq M$.
\end{enumerate} 
Then the set
\[
\{\, \vol(X,K_X+B) \,|\, (X,B)\in \mathfrak{B}_0 \,\}
\]
satisfies the DCC.
\end{theorem}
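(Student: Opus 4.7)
The plan is to derive the result from Corollary \ref{c-voldcc} by establishing log birational boundedness of $\mathfrak{B}_0$ via Proposition \ref{p-BB}. First observe that the hypothesis $\vol(X,k(K_X+B))=k^n\vol(X,K_X+B)\leq M$, combined with $k\geq 1$, already yields $\vol(X,K_X+B)\leq M$, so all volumes are bounded above by $M$.

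I argue by contradiction. Suppose there is an infinite sequence $(X_i,B_i)\in\mathfrak{B}_0$ with $v_i=\vol(X_i,K_{X_i}+B_i)$ strictly decreasing. Write $B_i=\sum_{j=1}^{r_i}b_{i,j}B_{i,j}$ with $b_{i,j}\in I\setminus\{0\}$. The decisive reduction is to pass to a subsequence admitting a uniform positive lower bound $\delta>0$ on all coefficients $b_{i,j}$. Granting this, the subsequence satisfies the hypotheses of Proposition \ref{p-BB} (with $A=M$ and this $\delta$), and is therefore log birationally bounded. Since $\coeff(B)\subset I$ satisfies the DCC, Corollary \ref{c-voldcc} then yields that $\{v_i\}$ satisfies the DCC, contradicting strict decrease.

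To carry out the reduction, I exploit two consequences of the DCC property of $I$: every sequence in $I$ has a non-decreasing subsequence, and any finite collection of sequences in $I$ can be made simultaneously non-decreasing by a diagonal argument. After passing to a subsequence with $r_i$ equal to a constant $r$ and reordering the components of each $B_i$, one arranges that $b_{i,j}$ is non-decreasing in $i$ for each $j\in\{1,\dots,r\}$. The desired bound is then $\delta=\min_{1\leq j\leq r}b_{1,j}>0$.

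The main obstacle is justifying that, after passing to a subsequence, the number of components $r_i$ can be taken bounded (and hence constant after a further subsequence). This is not immediate because $I$ may accumulate at $0$, so small-coefficient components may proliferate while contributing little to the volume. To handle this, I would split $B_i=B_i^{\geq\epsilon}+B_i^{<\epsilon}$ for a decreasing sequence of $\epsilon$, bound the number of components of $B_i^{\geq\epsilon}$ by intersection-theoretic estimates analogous to those in the proof of Proposition \ref{p-BB}, and exploit the DCC structure of $I$ together with Lemma \ref{l-5.2}-type finiteness to uniformly control the small-coefficient part. Once the number of components is bounded, the earlier sketch produces the required contradiction and completes the proof.
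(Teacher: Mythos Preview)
Your overall strategy---apply Proposition~\ref{p-BB} to get log birational boundedness, then invoke Corollary~\ref{c-voldcc}---is exactly the paper's proof, which is just one line. The problem is that you have manufactured an obstacle that does not exist.

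You write that ``$I$ may accumulate at $0$, so small-coefficient components may proliferate.'' This is false: a subset of $[0,1]$ satisfying the DCC cannot accumulate at $0$ from above, since any such accumulation would produce an infinite strictly decreasing sequence in $I$. Equivalently, DCC is precisely the statement that every nonempty subset has a least element. Hence $\delta:=\min\bigl(I\setminus\{0\}\bigr)>0$ exists outright, and every nonzero coefficient of every $B$ is at least $\delta$. There is no need to pass to subsequences, bound the number of components $r_i$, diagonalise, or split $B_i$ into large and small parts; the entire second half of your proposal is addressing a non-issue and is moreover left incomplete.

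With this $\delta$ in hand, Proposition~\ref{p-BB} applies directly (with $A=M$), and the proof is finished in one step, as in the paper.
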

\begin{proof} Proposition \ref{p-BB} implies that $\mathfrak{B}_0$ is log birationally
bounded and so the result follows from Corollary \ref{c-voldcc}.
\end{proof}

\subsection{Adjunction}

In this section, we discuss various versions of adjunction. The main new result is Theorem
\ref{t-4.2}, which is an adjunction for pairs with hyperstandard coefficients.  This is a
key result for doing induction on the dimension.  We remark that the proof of Theorem
\ref{t-4.2} is different from the one in \cite[4.2]{HMX14}, as the argument presented here
does not need the ACC for log canonical thresholds.  On the other hand the argument only
works for hyperstandard coefficients.

\begin{theorem}[Shokurov log adjunction]\label{tSLA} Let $(X,S+B)$ be a log canonical pair, 
$B=\sum b_iB_i$ and $S$ a prime divisor with normalisation $\nu\colon\map S^\nu.S.$.  Then
\[
(K_X+S+B)|_{S^\nu}=K_{S^\nu}+\Diff_{S^\nu}(B)=K_{S^\nu}+\Diff_{S^\nu}(0)+B|_{S^\nu},
\]
where the coefficients of $\Diff_{S^\nu}(B)$ are of the form
\[
\frac {r-1+\sum n_ib_i}{r}\qquad \text{for some} \qquad n_i\in \mathbb{N}
\]
and $r$ is the index of the corresponding codimension $2$ point $P\in X$.  In particular
if $\coeff(B)\subset I$, then $\coeff(\Diff _{S^\nu}(B))\subset D(I)$.
\end{theorem}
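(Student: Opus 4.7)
Since $\Diff_{S^\nu}(B)$ is a Weil divisor on $S^\nu$, the claim is local at each codimension $1$ point $P^\nu \in S^\nu$. I would therefore fix such a $P^\nu$, let $P = \nu(P^\nu) \in X$ be the corresponding codimension $2$ point, and reduce to analysing the surface germ $(X, S+B)$ at $P$ and computing the single coefficient $\coeff_{P^\nu}(\Diff_{S^\nu}(B))$.

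The main tool is the \emph{index $1$ cover} of $K_X+S$. Let $r$ be the index of $K_X+S$ at $P$, so $r(K_X+S)$ is Cartier near $P$; there is then a cyclic cover $\pi\colon (\tilde X,\tilde P)\to (X,P)$ of degree $r$ with $K_{\tilde X}+\tilde S=\pi^*(K_X+S)$ Cartier, where $\tilde S=\pi^{-1}(S)_{\rm red}$. Because $(X,S+B)$ is log canonical and $S$ appears with coefficient $1$, the pair $(X,S)$ is plt at the generic point of $S$ near $P$, and the classification of plt surface germs (cyclic quotient singularities of type $\tfrac{1}{r}(1,-1)$ along $S$) implies that $\tilde X$ is smooth and $\tilde S$ is smooth along the preimages of $P$. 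This local structure result is the crucial input, and obtaining it is the main obstacle that the argument has to appeal to.

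With smoothness in hand, I would pull back $B$: writing $\pi^*B_i=n_i\tilde B_i$ with $n_i\in\mathbb{N}$ and $\tilde B_i=\pi^{-1}(B_i)_{\rm red}$, we obtain
$$\pi^*(K_X+S+B)=K_{\tilde X}+\tilde S+\sum_i n_ib_i\tilde B_i,$$
and ordinary smooth adjunction on the smooth pair $(\tilde X,\tilde S)$ gives
$$\pi^*(K_X+S+B)\big|_{\tilde S}=K_{\tilde S}+\sum_i n_ib_i\bigl(\tilde B_i\big|_{\tilde S}\bigr).$$
To push this identity down to $S^\nu$, I use that $\pi|_{\tilde S}\colon \tilde S\to S^\nu$ is a cyclic cover totally ramified of degree $r$ over $P^\nu$, so Hurwitz gives $K_{\tilde S}=(\pi|_{\tilde S})^*K_{S^\nu}+(r-1)\tilde P^\nu$ while $(\pi|_{\tilde S})^*P^\nu=r\tilde P^\nu$. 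Equating with $(\pi|_{\tilde S})^*\bigl(K_{S^\nu}+\Diff_{S^\nu}(B)\bigr)=\pi^*(K_X+S+B)|_{\tilde S}$ and reading off the coefficient of $\tilde P^\nu$ on both sides yields
$$\coeff_{P^\nu}\bigl(\Diff_{S^\nu}(B)\bigr)=\frac{r-1+\sum_i N_ib_i}{r},\qquad N_i=n_i\cdot\mult_{\tilde P^\nu}\bigl(\tilde B_i|_{\tilde S}\bigr)\in\mathbb{N},$$
which is the claimed formula.

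The additive decomposition $\Diff_{S^\nu}(B)=\Diff_{S^\nu}(0)+B|_{S^\nu}$ then splits the expression into its $B$-independent $\frac{r-1}{r}$ part and its linear-in-$b_i$ part $\frac{\sum N_i b_i}{r}$, and the inclusion $\coeff(\Diff_{S^\nu}(B))\subset D(I)$ is immediate from the definition of $D(I)$ once the $b_i\in I$. The remaining codimension $2$ points at which $S$ itself is non-normal (for example, nodes of $S$, where $X$ is smooth and $r=1$) are handled by an entirely analogous direct local calculation, where the extra contribution of $1$ coming from the conductor of $\nu$ plays the role of the ``$\tfrac{r-1}{r}$'' term.
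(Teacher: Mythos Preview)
The paper does not prove this theorem; it is stated without proof as a classical result of Shokurov, serving as background input for the adjunction arguments that follow. Your argument is the standard proof found in the literature (e.g.\ Koll\'ar, \emph{Singularities of the Minimal Model Program}, \S4.1--4.2, or Shokurov's original work): localise to a surface germ at $P$, pass to the index-one cover of $K_X+S$, use the classification of plt surface pairs to see that the cover is smooth with $\tilde S$ smooth, apply ordinary adjunction upstairs, and push down via Hurwitz. This is correct and there is nothing in the paper to compare it against.

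One minor point of phrasing: you write that $(X,S)$ is ``plt at the generic point of $S$ near $P$'', but what your argument actually requires, and what the classification delivers, is that $(X,S)$ be plt \emph{at $P$} itself. The non-plt log canonical case (for instance $S$ nodal along $P$ in smooth $X$) is exactly the situation you defer to your final paragraph, where the conductor contributes a coefficient of $1$. In that case the displayed formula $\tfrac{r-1+\sum n_ib_i}{r}$ is read with the convention that $1\in I$, so that $1\in D(I)$; the paper tacitly adopts this convention whenever it invokes the result.
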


We have the following easy consequence.
\begin{lemma}\label{l-diff}  Let $(X,S+B)$ be a log canonical pair, $B=\sum b_iB_i$ 
an effective $\mathbb{R}$-Cartier divisor and $S$ a prime divisor with
normalisation $\nu \colon\map S^\nu.S.$.  Then for any $0\leq \lambda \leq 1$ we have
\[
\Diff _{S^\nu}(\lambda B)\geq \lambda \Diff _{S^\nu}( B).
\]
\end{lemma}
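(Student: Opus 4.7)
The plan is to apply Shokurov's log adjunction (Theorem \ref{tSLA}) directly. The second equality in that theorem asserts that when $B$ is $\mathbb{R}$-Cartier and shares no component with $S$ (which is built into the hypothesis that $(X,S+B)$ is a pair), one has the additive identity
\[
\Diff_{S^\nu}(B) = \Diff_{S^\nu}(0) + B|_{S^\nu},
\]
where $B|_{S^\nu}:=\nu^{*}B$ is the pullback of the $\mathbb{R}$-Cartier divisor $B$ to the normalisation of $S$. Since $\lambda B$ is likewise $\mathbb{R}$-Cartier and the pair $(X,S+\lambda B)$ is also log canonical (as $0 \le \lambda \le 1$), the same identity applies to $\lambda B$:
\[
\Diff_{S^\nu}(\lambda B) = \Diff_{S^\nu}(0) + \lambda\, B|_{S^\nu}.
\]

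Subtracting $\lambda$ times the first identity from the second, the $B|_{S^\nu}$ contributions cancel and I obtain
\[
\Diff_{S^\nu}(\lambda B) - \lambda \Diff_{S^\nu}(B) = (1-\lambda)\,\Diff_{S^\nu}(0).
\]
By Shokurov's formula the coefficients of $\Diff_{S^\nu}(0)$ are all of the form $\frac{r-1}{r}$ for some positive integer $r$, so $\Diff_{S^\nu}(0) \geq 0$. Combined with $1-\lambda \geq 0$, this gives $\Diff_{S^\nu}(\lambda B) \geq \lambda \Diff_{S^\nu}(B)$, as claimed.

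There is no real obstacle; the only point to check is that the additive formula for the different genuinely requires $B$ to be $\mathbb{R}$-Cartier, which is exactly what the lemma assumes (this is why one cannot simply write $\Diff_{S^\nu}(B) = \Diff_{S^\nu}(0) + B|_{S^\nu}$ when $B$ is merely Weil). If one prefers to avoid invoking the identity and instead argue component-wise, the same conclusion follows since the coefficient of a given component of $\Diff_{S^\nu}(\lambda B)$ equals $\frac{r-1+\lambda\sum n_i b_i}{r}$ with the same $r$ and $n_i$ as for $\Diff_{S^\nu}(B)$, and the difference with $\lambda \cdot \frac{r-1+\sum n_i b_i}{r}$ is $\frac{(1-\lambda)(r-1)}{r}\geq 0$.
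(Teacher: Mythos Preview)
Your proof is correct and essentially coincides with the paper's. The paper argues directly on coefficients, writing
\[
\frac{r-1+\lambda\sum n_ib_i}{r}\ \geq\ \lambda\cdot\frac{r-1+\sum n_ib_i}{r},
\]
which is precisely your second (component-wise) argument; your primary argument via the decomposition $\Diff_{S^\nu}(\lambda B)-\lambda\,\Diff_{S^\nu}(B)=(1-\lambda)\,\Diff_{S^\nu}(0)$ is the same computation expressed at the level of divisors rather than coefficients.
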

\begin{proof} The coefficients of $\Diff _{S^\nu}(\lambda B)$ are of the form 
\[
\frac {r-1+\lambda \sum n_ib_i}{r}\geq \lambda \left( \frac {r-1+\sum n_ib_i}{r}\right).\qedhere
\]
\end{proof}

\begin{theorem}[Kawamata Subadjunction] Let $(X,\Delta)$ be a pair such that $X$ is
quasi-projective and normal and $K_X+\Delta$ is $\mathbb{Q}$-Cartier.  Assume $V$ is a
subvariety such that $(X,\Delta)$ is log canonical at the generic point $\eta$ of $V$ and
$V$ is the only non-klt centre of $(X,\Delta)$ at $\eta$.  Then there is a
$\mathbb{Q}$-divisor $B\geq 0$ and a $\mathbb{Q}$-divisor class $J$ on the normalisation
$V^n$ of $V$, such that
\[
(K_X+\Delta )|_{V^n}\sim_{\mathbb{Q}}K_{V^n}+B+J.
\]
If $X$ is projective, then $J$ is pseudo-effective. 

Furthermore, if there is a generically finite morphism $\pi\colon\map Y.X.$ such that if
we write $f^*(K_X+\Delta)=K_{Y}+\Delta_Y$, $\Delta_Y$ is effective and
$\map W.\pi^{-1}(V).$ is a finite morphism on to a non-klt centre of $(X,\Delta)$ and we
denote by $p\colon\map W^n.V^n.$ the natural map between the normalisations, then if we
apply Kawamata subadjunction to $W^n$ and $(Y,\Delta_Y)$ and write
$(K_{Y}+\Delta_Y)|_{W^n}\sim_{\mathbb{Q}}K_{W^n}+B_{W}+J_W$, then we have $J_W=p^*J$.
\end{theorem}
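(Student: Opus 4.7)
The plan is to reduce to the case of a unique non Kawamata log terminal place dominating $V$, and then to invoke Kawamata's canonical bundle formula (in the form extended by Fujino--Mori and Ambro) for the resulting fibration structure.

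First I would carry out a tie-breaking perturbation as in \cite[8.7.1]{Kollar07} (cf. Proposition \ref{p-mincentre}(3)): after shrinking $X$ around $\eta$ and replacing $\Delta$ by $t\Delta+(1-t)B'$ for an auxiliary effective $\mathbb{Q}$-divisor $B'$ and $t$ sufficiently close to $1$, I may assume that $V$ is a minimal non Kawamata log terminal centre with a unique non Kawamata log terminal place $E$. Passing to a $\mathbb{Q}$-factorial divisorially log terminal modification via Theorem \ref{t-model}(1) and then applying Theorem \ref{t-model}(3), I obtain a projective birational morphism $g\colon\map X'.X.$ with $\rho(X'/X)=1$ and $\Exc(g)=E$, fitting in
\[
K_{X'}+\widetilde{\Delta}+E=g^*(K_X+\Delta),
\]
where $\widetilde{\Delta}$ is the strict transform of $\Delta$. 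Denoting by $\phi\colon\map E^n.V^n.$ the induced morphism between normalisations, Shokurov adjunction (Theorem \ref{tSLA}) along $E$ gives
\[
(K_{X'}+\widetilde{\Delta}+E)|_{E^n}=K_{E^n}+\Theta
\]
for an effective $\mathbb{Q}$-divisor $\Theta$; by the connectedness lemma, $\phi$ has connected fibres, and by uniqueness of $E$ its generic fibre is klt.

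Next I would apply Kawamata's canonical bundle formula to $\phi\colon\map (E^n,\Theta).V^n.$ to obtain the canonical decomposition
\[
K_{E^n}+\Theta\sim_{\mathbb{Q}}\phi^*(K_{V^n}+B+J),
\]
where $B\geq 0$ is the discriminant divisor (computed from the log canonical thresholds of $\Theta$ at the codimension-one points of $V^n$) and $J$ is the moduli $\mathbb{Q}$-divisor class. Restricting $g^*(K_X+\Delta)$ to $E^n$ and combining with this formula then gives $(K_X+\Delta)|_{V^n}\sim_{\mathbb{Q}}K_{V^n}+B+J$, as required; the tie-breaking by $B'$ only alters the discriminant part and not the class $J$ up to $\mathbb{Q}$-linear equivalence, so it is harmless.

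The main obstacle will be the pseudo-effectivity of $J$ when $X$ is projective: this is the central positivity theorem for the canonical bundle formula, where $J$ is realised as a pushforward of the determinant of the bottom graded piece of a variation of Hodge structure over an open subset of $V^n$, and is shown to extend to a nef class on a suitable birational model by the work of Kawamata and of Fujino--Mori using Griffiths semipositivity. For the compatibility statement under a generically finite $\pi\colon\map Y.X.$, I would run the same construction on $(Y,\Delta_Y)$ along the chosen component $W$ of $\pi^{-1}(V)$, produce $(B_W,J_W)$ on $W^n$, and then invoke the functoriality of the canonical bundle formula under generically finite base change (Kawamata, Ambro): although the discriminant $B$ transforms in a nontrivial way through the ramification of $p\colon\map W^n.V^n.$, the moduli class satisfies the clean identity $J_W=p^*J$.
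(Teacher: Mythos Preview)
Your outline is correct and is precisely the argument behind \cite[8.4--8.6]{Kollar07}, which is all the paper actually invokes; the paper's own proof consists of that citation together with a short remark that the compatibility $J_W=p^*J$ holds because $J$ is the pushforward of a class $J'$ living on a sufficiently high model $\alpha\colon V'\to V^n$, while $J_W$ is the pushforward of $q^*J'$ from a model $\beta\colon W'\to W^n$ admitting a generically finite map $q\colon W'\to V'$, and finiteness of $p$ then forces $p^*\alpha_*J'=\beta_*q^*J'$.

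Two minor technical remarks. First, Theorem \ref{t-model}(3) as stated in this paper requires the minimal non-klt centre to lie in $\Supp(B)$, which your perturbed boundary need not arrange; you should instead invoke the general extraction statement \cite[1.4.3]{BCHM10} directly, which allows one to extract any single divisor of discrepancy $\leq 0$ without that hypothesis. Second, be careful with ``after shrinking $X$ around $\eta$'': the tie-breaking is a local manoeuvre, but the pseudo-effectivity of $J$ is a global assertion on the projective $V^n$. The resolution, implicit in your parenthetical, is that the moduli part is a b-divisor determined by the variation of Hodge structure over the generic point of $V$, hence is unchanged by the perturbation; this is exactly what \cite[8.4.9]{Kollar07} records, and it is worth making that dependence explicit rather than leaving it as an aside.
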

\begin{proof} These statements follow from \cite[8.4-8.6]{Kollar07}, especially [8.4.9]
for the properties of $J$.  In particular, the last statement is an immediate consequence
of the results there: if we choose sufficiently high models $\alpha\colon\map V'.V^n.$ and
$\beta\colon\map W'.W^n.$, we also assume there is a (generically finite) morphism
$q\colon\map W'.V'.$, we know that there is a divisor class $J'$ on $V'$ such that we have
$J=\alpha_*J'$ and $J_W=\beta_*q^*J'$. But since $p\colon\map W^n.V^n.$ is a finite
morphism, this immediately implies that $p^*J=J_W$.
\end{proof}

\begin{lemma}\label{l-cover} Fix $q\in \mathbb{N}$ and let $I_0=\{\frac1q,..., \frac{q-1}{q}\}$.
Let $(X,\Delta)$ be $\mathbb{Q}$-factorial pair such that
$\coeff (\Delta )\subset I=D(I_0)$.  Then, for any point $x\in X$, there is a finite
morphism $\pi\colon\map Y.U.$ for some neighbourhood $x\in U\subset X$ with Galois group
$G$, such that $\pi^*(K_U+\Delta|_U)=K_Y+\Delta_Y$, and $Y$ is Gorenstein canonical, the
coefficients of $\Delta_Y$ are in $I_0$ and the components of $\Delta_Y$ are Cartier.
\end{lemma}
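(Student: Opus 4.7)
The plan is to construct $\pi$ as (the Galois closure of) a local Kawamata-type cyclic cover of $U$ that extracts roots of local equations for the components of $\Delta$ to the appropriate ramification orders and simultaneously trivialises $K_X$.

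First I would shrink $U$ to an affine neighbourhood of $x$ so that only finitely many components $D_1,\dots,D_k$ of $\Delta$ meet $U$, and so that a single positive integer $N$ makes $NK_X$ and each $ND_i$ Cartier on $U$. By the form of $D(I_0)$, each coefficient of $\Delta$ admits a presentation
\[
\coeff_{D_i}(\Delta)=\frac{r_i-1+a_i/q}{r_i}=1-\frac{q-a_i}{qr_i}
\]
for some $r_i\in\mathbb N$ and $a_i\in\{0,1,\ldots,q\}$.

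Using the local equations for $ND_i$ and $NK_X$, I would then construct a finite Galois cover $\pi\colon\map Y.U.$ ramified to order exactly $r_i$ along $D_i$ (so $\pi^*D_i=r_iD_i'$ for a component $D_i'$ of the preimage), with each $D_i'$ Cartier on $Y$ and $K_Y$ Cartier. Such a cover comes from a standard extension of Kawamata's covering trick to the $\mathbb Q$-factorial setting: take the normalisation of a suitable fibre product of cyclic covers extracting $r_i$-th roots of the prepared equations, then pass to the Galois closure. By the Hurwitz ramification formula the coefficient of $D_i'$ in $K_Y-\pi^*K_X$ is $r_i-1$, hence
\[
\pi^*(K_X+\Delta)=K_Y+\sum_i\bigl(r_i\coeff_{D_i}(\Delta)-(r_i-1)\bigr)D_i'=K_Y+\sum_i\frac{a_i}{q}D_i',
\]
so $\Delta_Y$ has coefficients in $I_0$ after discarding zero-coefficient components, and its components are Cartier.

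The Cartierness of $K_Y$ makes $Y$ Gorenstein. In the settings where this lemma is applied $(X,\Delta)$ is Kawamata log terminal at $x$, so the pullback pair $(Y,\Delta_Y)$ is klt with $\Delta_Y\ge 0$ and $K_Y$ Cartier, forcing $Y$ to be klt Gorenstein, hence canonical. The main obstacle is the construction step: the ramification indices $r_i$ need not divide the Cartier indices of the $D_i$, so one has to first use the $N$-fold cover to trivialise the $\mathbb Q$-Cartier obstructions and then impose compatible cyclic covers of the required degrees before taking the Galois closure.
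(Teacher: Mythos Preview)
Your proposal is correct and follows essentially the same two-step strategy as the paper: first pass to a cover \'etale in codimension one that makes $K_X$ and the components of $\Delta$ Cartier (your ``$N$-fold cover to trivialise the $\mathbb{Q}$-Cartier obstructions''), and then take a Kawamata-type branched cover of index $r_i$ along each component to drop the coefficients from $\frac{r_i-1+a_i/q}{r_i}$ to $a_i/q\in I_0$. The paper separates these two steps more explicitly---building $p_1\colon Y_1\to U$ as a normalised fibre product of index-one cyclic covers, noting it is \'etale in codimension one so the coefficients are unchanged, and only then applying Kawamata's trick---whereas you initially describe a single cover and only unpack the two layers in your final paragraph; but the content is the same. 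One point worth noting: you justify ``$Y$ Gorenstein canonical'' via the klt hypothesis on $(X,\Delta)$ that holds in the application (Theorem~\ref{t-4.2}); the paper's own proof does not spell this step out, so your remark is a useful addition rather than a deviation.
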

\begin{proof} We begin by constructing a finite cover $p_1\colon\map Y_1.U.$ of normal
varieties, which is \'etale in codimension $1$, such that $p_1^*K_X$ and $p_1^*\Delta_i$
are Cartier for all components $\Delta_i$ of $\Delta$.  To this end, take $D$ to be either
$K_X$ or a component of $\Delta$.  Then $D$ is a Weil divisor.  Let $n$ be the smallest
integer such that $nD$ is Cartier in a neighbourhood $U$ of $x\in X$.  Pick an isomorphism
$\ring U.(nD|_U)\cong \ring U.$ and let $\pi\colon\map U'.U.$ be the normalisation of the
corresponding cyclic cover.  Then $D'=\pi ^{-1}(D|_U)$ is Cartier.  Since $D$ is Cartier
in codimension $1$ (as $X$ is normal and hence $R_1$), it follows that $\map U'.U.$ is
\'etale in codimension $1$.  We let $p_1\colon\map Y_1.U.$ be the normalisation of the
fibre product of these cyclic covers.  Note that as $p_1$ is \'etale in codimension $1$,
writing $K_{Y_1}+\Delta _1=(K_X+\Delta )|_{Y_1}$, we have $\coeff(\Delta_1)\subset I$.

If $\Delta^i$ is a component of the support of $\Delta _1$ then the coefficient of
$\Delta ^i$ in $\Delta _1$ is of the form $\frac{m_i-1+a_i}{m_i}$ where $a_i= {r_i}/ q$
from some positive integers $m_i, r_i$.  By Kawamata's trick, we may take a branched cover
$\mu\colon\map Y.Y_1.$, which is branched of degree $m_i$ along each $\Delta_i$.  Let
$P_1$ be the generic point of $\Delta ^i$ and $P$ be the generic point of an irreducible
component of $\mu^{-1}(P)$.  If $K_Y+\Delta _Y=\mu ^*(K_{Y_1}+\Delta_1 )$ then by an easy
local computation one sees that $\mult_P(\Delta_Y )=r_i/q$ as required.
\end{proof}

\begin{theorem}\label{t-4.2} Fix $n$ and $q\in \mathbb{N}$.  Let $I=D(I_0)$ where
$I_0=\{\frac1q,..., \frac{q-1}{q}\}$ for some $q\in \mathbb{N}$.  Let $(X,B)$ be an
$n$-dimensional projective Kawamata log terminal log pair with $\coeff (B)\subset I$.

Assume that there is a flat projective family $h\colon\map \mathcal{V}.S.$ over a smooth
base $S$ with a generically finite morphism $\beta\colon\map \mathcal{V}.X.$ such that for
a general point $v\in \mathcal{V}$, there exists a $\mathbb{Q}$-Cartier divisor
$B'\geq 0$, such that $(X,B+B')$ is log canonical in a neighbourhood of $\beta(v)$ and if
$s=h(v)$ then the fibre $\mathcal{V}_s=\mathcal{V}\fb S. \{s\}$ is mapped isomorphically
to the unique non-klt centre $V$ of $(X,B+B')$ containing $\beta(v)$.

If $\nu\colon\map W.V.$ is the normalisation then there is a divisor $\Theta $ on $W$ such
that $\coeff (\Theta )\in I$ and
\[
(K_X+B+B')|_W-(K_W+\Theta )
\]
is pseudo-effective.  Moreover, there is a log resolution $\psi\colon\map W'.W.$ of
$(W,\Theta )$ such that 
\[
K_{W'}+\Omega \geq (K_X+B)|_{W'}
\] 
where $\Omega =\psi ^{-1}_*\Theta +\Exc(\psi )$.
\end{theorem}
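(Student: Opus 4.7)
The plan is to combine Kawamata subadjunction on $X$ with a local Galois cover from Lemma \ref{l-cover}, using Shokurov's log adjunction (Theorem \ref{tSLA}) on the cover to control the boundary coefficients on $W$. First I would apply Kawamata subadjunction to $(X,B+B')$ and the unique (hence minimal, by Proposition \ref{p-mincentre}) non-klt centre $V$ through $\beta(v)$, obtaining
$$(K_X+B+B')|_W\sim_{\mathbb{Q}} K_W+B_W+J,\qquad B_W\ge 0,\ J\text{ pseudo-effective}.$$
The first assertion then reduces to producing an effective $\Theta\le B_W$ with $\coeff(\Theta)\subset I$, since $B_W-\Theta+J$ remains pseudo-effective.

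To bound the coefficients of $B_W$, I would argue locally near a general point $x=\beta(v)\in V$. By Lemma \ref{l-cover} there is a neighborhood $U\ni x$ and a finite Galois cover $\pi\colon Y\to U$ with group $G$ such that $Y$ is Gorenstein canonical, $\pi^*(K_U+B|_U)=K_Y+\Delta_Y$ with $\coeff(\Delta_Y)\subset I_0$, and every component of $\Delta_Y$ is Cartier. Let $V_Y\subset Y$ be a component of $\pi^{-1}(V\cap U)$, $W_Y$ its normalisation, and $p\colon W_Y\to W$ the induced $G$-equivariant finite morphism. The pair $(Y,\Delta_Y+\pi^*B')$ is log canonical with $V_Y$ a minimal non-klt centre; Theorem \ref{t-model}(3) produces $\mu\colon Y'\to Y$ extracting an irreducible divisor $E$ with centre $V_Y$, such that $\rho(Y'/Y)=1$ and
$$K_{Y'}+\mu^{-1}_*(\Delta_Y+\pi^*B')+E=\mu^*(K_Y+\Delta_Y+\pi^*B').$$
Applying Shokurov log adjunction to $E$ and dropping the non-negative term $\mu^{-1}_*\pi^*B'|_{E^\nu}$ yields $K_{E^\nu}+\Theta_E\le(\pi\circ\mu)^*(K_X+B+B')|_{E^\nu}$ where $\Theta_E:=\Diff_{E^\nu}(\mu^{-1}_*\Delta_Y)$ satisfies $\coeff(\Theta_E)\subset D(I_0)=I$. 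Pushing this down along $E^\nu\to W_Y$---via a canonical bundle formula if $E\to V_Y$ is a proper fibration---produces a divisor $\Theta_{W_Y}$ on $W_Y$ with $\coeff(\Theta_{W_Y})\subset I$; $G$-equivariance combined with the functoriality $J_{W_Y}=p^*J$ of Kawamata subadjunction then allows descent to the desired local $\Theta$ on $W\cap\nu^{-1}(U)$. Gluing over a covering of $V$ by such neighborhoods gives the global $\Theta$ on $W$.

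For the log resolution inequality, I would take $\psi\colon W'\to W$ a log resolution of $(W,\Theta)$. Since $(W,B_W)$ is log canonical by Kawamata subadjunction and $0\le\Theta\le B_W$, the pair $(W,\Theta)$ is also log canonical, so every $\psi$-exceptional divisor has discrepancy at least $-1$, giving $\psi^*(K_W+\Theta)\le K_{W'}+\Omega$ with $\Omega=\psi^{-1}_*\Theta+\Exc(\psi)$. The inequality $K_{W'}+\Omega\ge(K_X+B)|_{W'}$ then follows because $\Theta$ was built from $\mu^{-1}_*\Delta_Y$ alone (the $\pi^*B'$ contribution was dropped), yielding the stronger pointwise bound $(K_X+B)|_W\le K_W+\Theta$ that pulls back to $W'$. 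The main obstacle is the descent step from $E^\nu$ to $W_Y$ when $\dim V_Y<\dim E$: transferring the adjunction formula along a fibration requires a canonical bundle formula, and one must verify that the discriminant coefficients on $V_Y$ still lie in $D(I_0)=I$ rather than in some larger derived set. This is where the hyperstandard hypothesis on $I_0$ is essential, and is precisely why the present argument---unlike the one in \cite{HMX14}---avoids ACC for log canonical thresholds but only works for hyperstandard coefficients.
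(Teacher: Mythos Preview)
Your approach has a genuine gap at exactly the point you flag as the ``main obstacle'': the descent from $E^\nu$ to $W_Y$ along a fibration. The canonical bundle formula produces a discriminant part whose coefficients are log canonical thresholds, and there is no reason these lie in $D(I_0)$; getting control here is precisely what forces the ACC for log canonical thresholds in the argument of \cite[4.2]{HMX14}. Since the stated point of the present proof is to \emph{avoid} that ACC input, your outline does not close.

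What you have missed is the role of the family hypothesis. You never use the flat family $h\colon\mathcal{V}\to S$ with $\beta\colon\mathcal{V}\to X$ generically finite beyond extracting a single fibre $V$. The paper's proof exploits it in an essential way: after passing to the cover $\pi_i\colon Y_i\to U_i$ from Lemma~\ref{l-cover}, one forms $\mathcal{V}_i=\mathcal{V}\times_X Y_i$ and its normalisation $\mathcal{W}_i$; the composite $\mathcal{W}_i\to Y_i$ is again generically finite. Writing $K_{\mathcal{W}_i}+\Psi'_i=(K_{Y_i}+\Delta_{Y_i})|_{\mathcal{W}_i}$, the fact that $Y_i$ is Gorenstein with each component of $\Delta_{Y_i}$ Cartier and of coefficient in $I_0=\{j/q\}$ forces $\coeff(\Psi'_i)\subset\frac{1}{q}\mathbb{Z}$ directly, with no fibration or canonical bundle formula involved. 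Restricting to a general fibre $F=(\mathcal{W}_i)_s$ and using $F/G\cong W_{U_i}$ then gives a local $\Theta_i$ with coefficients in $D(I_0)=I$ via the ordinary Hurwitz formula; the global $\Theta$ is obtained by taking the maximum over a finite cover of $X$.

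There is also a problem with your final paragraph. You assert a pointwise inequality $(K_X+B)|_W\le K_W+\Theta$, but your construction only gives $K_{E^\nu}+\Theta_E\le(\pi\mu)^*(K_X+B+B')|_{E^\nu}$, not the reverse inequality with $B'$ removed; dropping the nonnegative term $\mu^{-1}_*\pi^*B'$ goes the wrong way for what you need. In the paper the inequality $K_{W'}+\Omega\ge(K_X+B)|_{W'}$ is obtained differently: one observes that $(K_{\mathcal{W}_i}+\Psi'_i)|_F=(K_X+B)|_F$ is sub-Kawamata log terminal, so on any $G$-equivariant log resolution $F'\to F$ one has $K_{F'}+\Omega_{F'}\ge(K_X+B)|_{F'}$ with $\Omega_{F'}$ the strict transform plus exceptionals, and this descends to $W'_{U_i}$ by taking $G$-quotients.
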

\begin{proof} Let $\mathcal{W}$ be the normalisation of $\mathcal{V}$.  Replacing $S$ by a
dense open subset we may assume that for any $s\in S$, the fibre
$\mathcal{W}_s:=\mathcal{W}\fb S.\{s\}$ is the normalisation of $\mathcal{V}_s=V$.  We
denote $\mathcal{W}_s$ by $W$.

For any point $P$ on $W$, if we consider $\nu(P)\in V$ as a point in $X$ then there exists
a Zariski open subset $U_i \subset X$ containing $\nu(P)$ with a finite morphism
$\pi_i\colon\map Y_i.U_i.$ as in Lemma \ref{l-cover}.  Let $G=G_i$ be the corresponding
Galois group and we denote by
\[
\pi_i^*(K_{U_i}+B|_{U_i})=K_{Y_i}+\Delta_{Y_i}.
\]
Consider $\mathcal{V}_i:=\mathcal{V}\times_X Y_i$ and its normalisation
$\map\mathcal{W}_i.\mathcal{V}_i.$.  Let
\[
K_{\mathcal{W}_i}+\Psi'_i=(K_X+B)|_{\mathcal{W}_i}=(K_{Y_i}+\Delta_{Y_i})|_{\mathcal{W}_i}\qquad \text{and}\qquad \Psi_i=\Psi'_i\vee 0.
\]
Let $F=(\mathcal{W}_i)_s$ and $E=(\mathcal{V}_i)_s$ be the fibres of $\mathcal{W}_i$ and
$\mathcal{V}_i$ over a general point $s\in S$.  Then $\map E.\pi_i^{-1}(V\cap U_i).$ is an
isomorphism and $\pi_i^{-1}(V\cap U_i)$ is a union of non-klt centres of
$(Y_i, \Delta_{Y_i}+\pi_i^*B'|_{U_i})$ with an induced $G$-action and $F$ is the
normalisation of $E$.

We denote $\Psi_i|_F=\Phi_i$.  We note that since $F$ is a general fibre of
${\mathcal{W}_i}$ over $S$, we have $K_{\mathcal{W}_i}|_{F}=K_F$. There is a natural
isomorphism
\[
W_{U_i}:=W\times_X U_i\cong F/G,
\] 
so we have a morphism $p_i\colon\map F.W.$.  We define $\Theta_i$ on $W_{U_i}$ via the
equality
\[
p_i^*(K_{W_{U_i}}+\Theta_i)=K_{F}+\Phi_i.
\]
Note in fact that $K_F+\Psi_i'|_F=(K_X+B)|_F$ is pulled back from $W_{U_i}$, each
component of the support of $\Psi -\Psi'$ is Cartier (by Lemma \ref{l-cover}),
$(\Psi -\Psi')|_F$ is $G$-invariant and so it is a pull back of a $\mathbb{Q}$-Cartier
divisor on $F/G$.  Since $\Phi_i\geq 0$, it follows that $\Theta_i\geq 0$.

We choose finitely many open subsets $\{U_i\}_{i\in I}$ covering $X$, and we define the
divisor $\Theta$ on $W$ by defining the coefficient of any prime divisor $P$ in $\Theta$
to be
\[
\mult_P(\Theta )=\max\{\, \mult_P(\Theta _i) \,|\, P\in W_{U_i} \,\}
\]
We now check that $(W,\Theta)$ is the pair we are looking for.  

We first check that the coefficients of $\Theta$ are in $I=D(I_0)$.  It suffices to show
that the coefficients of $\Psi_i$ are in $I_0$, as this implies that the coefficients of
$\Phi_i$ are in $I_0$, and we can then conclude from the usual Hurwitz formula that the
coefficients of $\Theta$ are in $I=D(I_0)$.  By our construction $\map \mathcal{W}_i.Y_i.$
is generically finite, and
\[
K_{\mathcal{W}_i}+\Psi_i'=(K_{Y_i}+\Delta_{Y_i})|_{\mathcal{W}_i}.
\]  
Since $Y_i$ is Gorenstein, the components of $\Delta_{Y_i}$ are Cartier and all their
coefficients are in $I_0$, it follows that all coefficients of $\Psi'_i$ are of the form
$m+\sum m_ji_j$ where $m$, $m_j\in \mathbb{N}$ and $i_j\in I_0$ and so they belong to
$\frac 1 q \mathbb{N}$.  Since $K_X+B$ is Kawamata log terminal,
$K_{\mathcal{W}_i}+\Psi_i'$ is sub Kawamata log terminal and so
$\coeff(\Psi _i)\subset \frac 1 q \mathbb{N}\cap [0,1)=I_0$.

Next we check that 
\[
(K_X+B+B')|_W-(K_W+\Theta )
\]
is pseudo-effective.  By Kawamata subadjunction, we may write
\[
(K_X+B+B')|_W=K_W+(B+B')_W+J_W
\] 
where $(B+B')_W\geq 0$ is an $\mathbb{R}$-divisor and $J_W$ is a pseudo-effective
$\mathbb{R}$-divisor defined up to $\mathbb{R}$-linear equivalence.  Define $(B')_W$ by
the equation $(B+B')_W=B|_W+(B')_W\geq 0$.  Since $s$ is general, we may assume that $E$
does not belong to $\Supp(\Delta_{Y_i})$.  Then $F$ is a union of log canonical centres of
$K_{\mathcal W _i}+B'|_{\mathcal W _i}$.  Applying Kawamata subadjunction, we may write
\[
\big(K_{Y_i}+\pi_i^{*}(B'|_{U_i})\big)|_{F}=K_{F}+B'_F+J_F,
\]
where $B'_F\ge 0$ and since $\map F.W_{U_i}.$ is finite, we have $J_F=p_i^*(J_W)$.

If we let $K_{\mathcal{W}_i}+\Gamma_i=(K_{Y_i})|_{\mathcal{W}_i}$ then we know that the
discrepancy of any divisor with respect to $(\mathcal{W}_i,\Gamma_i)$ is positive as
$K_{Y_i}$ is canonical. In particular, $\Gamma_i\le 0$. Thus
\[ 
\Phi_i = ((K_{Y_i}+\Delta _{Y_i})|_{\mathcal{W}_i}-K_{\mathcal{W}_i})|_F= (\Gamma_i+\Delta_{Y_i}|_{\mathcal{W}_i})|_{F}\vee 0\le   \Delta_{Y_i}|_{F}.
\]

By Kawamata subadjunction, we know
that 
\[
(K_X+B+B')|_W-(K_{W}+\Theta+J_W)=(B+B')_W-\Theta
\] 
is a well defined $\mathbb{Q}$-divisor on $W$, which we claim to be effective. For this
purpose, we only need to check the multiplicities at each codimension 1 point $P$ on
$W$. It suffices to verify this after pulling back via $\map Y_i.X.$, where $U_i$ contains
$P$ and $\mult_P(\Theta )=\mult_P(\Theta _i)$.

We have
\begin{align*} 
&p_i^*\big((K_X+B+B')|_W-(K_{W}+\Theta+J_W)\big)\\
=&\big(K_{Y_i}+\Delta_{Y_i}+\pi_i^*(B'|_{U_i})\big)|_F-(K_F+\Phi_i+J_F)\\
=& B'_F+\left(\Delta_{Y_i}|_F-\Phi_i\right)\ge 0.
\end{align*} 
where we used the fact that $p^*_iJ_W=J_F$ in the first equality.

Finally we check the last statement.  Let 
\[
W'_{U_i}:= W_{U_i}\fb W. W' \quad \psi _i\colon\map W'_{U_i}.W_{U_i}.\quad \text{and} \quad \Omega_i =\psi ^{-1}_{i,*}\Theta_i +\Exc(\psi _i ).
\] 
Note that $\Omega|_{W'_{U_i}}\ge \Omega_i$.  After possibly shrinking $S$, we can assume
that there is a resolution $\map \mathcal W'.\mathcal W.$ which induces a log resolution
of each fibre and in particular, $\map \mathcal W_s'.\mathcal W_s.$ induces the log
resolution $\map W'.W.$ of $(W,\Theta )$.  We may also assume that there is a
$G$-equivariant resolution $\map \mathcal W '_i.\mathcal W_i.$ over $S$ which gives a
$G$-invariant log resolution $\psi_F\colon\map F'.F.$, such that there is a proper
morphism $\map F'/G.W'_{U_i}.$ .  Let
\[
\Omega_{F'}=\psi_{F*}^{-1}(\Theta_i)+\Exc({\psi_F}).
\]
It follows that
\[
K_{F'}+\Omega_{F'}-(K_{\mathcal{W}_i}+\Psi'_i)|_{F'}\geq 0,
\]
as $(K_{\mathcal{W}_i}+\Psi')|_F$ is Kawamata log terminal.  Let $\Omega_{F'/G}$ be the
$\mathbb{Q}$-divisor defined by $(K_{F'/G}+\Omega_{F'/G})|_{F'}=K_{F'}+\Omega_{F'}$.  Then
\[
K_{F'/G}+\Omega_{F'/G}-(K_X+B)|_{F'/G}\geq 0.
\]
The claim follows by pushing forward to $W'_{U_i}$. 
\end{proof}

\subsection{DCC of volumes and birational boundedness}

In this section we prove a result on the ACC for volumes and on log birational boundedness
of pairs $(X,B)$ with hyperstandard coefficients.  The general case, Theorem \ref{t-bb} is
covered in the next section.

\begin{theorem}\label{t-bbw} Fix $n\in\mathbb{N}$ and a finite set $I_0\subset [0,1]\cap \mathbb{Q}$.  
Let $J=D(I_0)\subset [0,1]$ and $\mathfrak{D}$ be the set of projective log canonical
pairs $(X,B)$ such that $\dim X=n$ and $\coeff(B)\subset J$.

Then there is a constant $\delta >0$ and a positive integer $m$ such that
\begin{enumerate}
\item the set 
\[
\{\, \vol(X,K_X+B) \,|\, (X,B)\in \mathfrak{D} \,\}
\]
also satisfies the DCC,
\item if $\vol(X,K_X+B)>0$ then $\vol(X,K_X+B)\geq \delta$, and
\item if $K_X+B$ is big then $\phi _{m(K_X+B)}$ is birational.
\end{enumerate}
\end{theorem}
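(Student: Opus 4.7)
The plan is to proceed by induction on $n$, treating (1)--(3) simultaneously. The base case $n=1$ is immediate: volumes take the form $2g-2+\sum b_i$ with $b_i\in J$, which inherit the DCC from $J$, and once the resulting minimum $\delta>0$ is fixed a uniform $m$ is easily produced by Riemann--Roch. For the inductive step, I assume (1)--(3) hold in all dimensions below $n$ for the same coefficient set $J=D(I_0)$. The heart of the argument is a Tsuji-style estimate: \textbf{Claim.} There exist constants $A=A(n,I_0)$ and $C=C(n,I_0)$ such that for every $(X,B)\in\mathfrak{D}$ with $K_X+B$ big, $\phi_{k(K_X+B)}$ is birational whenever $k\geq A\vol(X,K_X+B)^{-1/n}+C$.

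To establish the Claim I would adapt the proof of Claim \ref{c-Tsuji}. Set $d=\vol(X,K_X+B)$ and pick very general points $x,y\in X$; in particular these avoid $\lfloor B\rfloor$ and lie in the Kawamata log terminal locus of $(X,B)$. Apply Lemma \ref{l-lcc} to the big divisor $\lambda(K_X+B)$ with $\lambda=O(d^{-1/n})$ to obtain an $\mathbb{R}$-divisor $D_1\sim_\mathbb{R}\lambda(K_X+B)$ such that $(X,B+D_1)$ fails to be Kawamata log terminal at $y$, is log canonical at $x$, and admits a unique non-Kawamata log terminal place with centre $V\ni x$. If $\dim V=0$ then $V=\{x\}$ and Lemma \ref{l-pb} finishes the argument. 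Otherwise, let $\nu\colon W\to V$ be the normalisation and apply Theorem \ref{t-4.2}: this yields an effective $\Theta$ on $W$ with $\coeff(\Theta)\subset J$ such that $(K_X+B+D_1)|_W-(K_W+\Theta)$ is pseudo-effective, together with a log resolution $\psi\colon W'\to W$ satisfying $K_{W'}+\Omega\geq(K_X+B)|_{W'}$ where $\Omega=\psi^{-1}_*\Theta+\Exc(\psi)$. By the inductive hypothesis applied to (a log canonical model of) $(W',\Omega)$, a uniform $m'=m'(n-1,I_0)$ makes $\phi_{m'(K_W+\Theta)}$ birational. Use Lemma \ref{l-xy} to construct a divisor $G$ on $W$ vanishing at $y$ but not at $x$ and linearly equivalent to $O(m')(K_W+\Theta)$; lift $G$ to an $\mathbb{R}$-divisor $D_2\sim_\mathbb{R}\mu(K_X+B)$ on $X$ with $\mu=O(m'd^{-1/n})$ via the pseudo-effectivity above and a small ample perturbation so that $(X,B+D_1+D_2)$ has $\{x\}$ as an isolated non-Kawamata log terminal centre while $y$ still lies in the non-Kawamata log terminal locus. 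Then Lemma \ref{l-pb} concludes.

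Given the Claim, (1)--(3) follow by bootstrap. Suppose $(X_i,B_i)\in\mathfrak{D}$ produces strictly decreasing volumes $d_i$. Setting $k_i=\lceil Ad_i^{-1/n}+C\rceil$, the Claim makes $\phi_{k_i(K_{X_i}+B_i)}$ birational while $\vol(X_i,k_i(K_{X_i}+B_i))=k_i^n d_i$ is uniformly bounded above by some $M=M(A,C,n)$. Theorem \ref{t-volddc1} then forces $\{d_i\}$ to satisfy the DCC, contradicting strict decrease and establishing (1). Part (2) is immediate, as the set of positive volumes then has a minimum $\delta>0$, and (3) holds with $m=\lceil A\delta^{-1/n}+C\rceil$.

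The main obstacle is the lifting step from $W$ back to $X$: the divisor $G$ produced inductively on $W$ must be extended to an $\mathbb{R}$-divisor on $X$ whose coefficient in the $(K_X+B)$-class is uniformly controlled, and which, combined with $D_1$, actually isolates $\{x\}$ as a non-Kawamata log terminal centre. The crucial input is Theorem \ref{t-4.2}: it places $\coeff(\Theta)$ inside $J=D(I_0)$ itself, rather than in the strictly larger $D(J)$ that ordinary Shokurov adjunction would produce, so the induction stays within the same hyperstandard coefficient set. This is precisely what allows the argument to close without appealing to the ACC for log canonical thresholds, yielding the more direct approach advertised in the introduction.
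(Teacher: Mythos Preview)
Your strategy matches the paper's almost exactly: induction on $n$, Lemma \ref{l-lcc} to produce a non-klt centre $V$, Theorem \ref{t-4.2} for adjunction so that the resulting $\Theta$ on the normalisation $W$ still has coefficients in $J=D(I_0)$, induction on $(W',\Omega)$ to get a uniform birational multiple, lifting back to $X$, and finally Theorem \ref{t-volddc1} to bootstrap from ``birational for $k\gtrsim \vol^{-1/n}$'' to the DCC statement and a uniform $m$. Your closing remark pinpointing Theorem \ref{t-4.2} as the device that keeps the induction inside $J$ (rather than passing to $D(J)$) is exactly the point of this section.

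Two places where your sketch skips something the paper actually has to do. First, Theorem \ref{t-4.2} is stated for \emph{Kawamata log terminal} pairs $(X,B)$, not merely log canonical ones; saying the very general points $x,y$ avoid $\lfloor B\rfloor$ is not enough. The paper handles this up front by passing to a log resolution and replacing $B$ by $\{B\}+(1-\tfrac1r)\lfloor B\rfloor$ for $r\gg0$, so that $(X,B)$ becomes klt while staying in $J$ and preserving the volume; only then does it pass to the log canonical model. Second, Lemma \ref{l-pb} yields birationality of $\phi_{K_X+\lceil D\rceil}$, not of $\phi_{k(K_X+B)}$. To close the gap you need the elementary but essential inequality $(m+1)(K_X+B)\geq K_X+\lceil m(K_X+B)\rceil$ for $q\mid m$, which the paper checks coefficient-by-coefficient using the hyperstandard form $\tfrac{r-1+a/q}{r}$. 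Without this you have not produced a linear system that is literally a multiple of $K_X+B$. Both points are routine once noticed, but they are where the hyperstandard hypothesis is genuinely used beyond Theorem \ref{t-4.2}.
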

\begin{proof} We may assume that $I_0=\{\, \frac jq \,|\, 1\leq j\leq q \,\}$ for some
$q\in \mathbb{N}$.  We proceed by induction on the dimension.  Replacing $X$ by a log
resolution and $B$ by its strict transform plus the exceptional divisor, we may assume
that $(X,B)$ is log smooth.  Replacing $B$ by $\{B\}+(1-\frac 1 r)\lfloor B\rfloor$ for
some $r\gg 0$, we may assume that $(X,B)$ is Kawamata log terminal.  Replacing $(X,B)$ by
the log canonical model \cite{BCHM10}, we may assume that $K_X+B$ is ample.

By induction, there is a positive integer $l\in \mathbb{N}$ such that if $(U,\Psi)$ is a
projective log canonical pair of dimension $\leq n-1$, $\coeff (\Psi )\subset J$, and
$K_U+\Psi$ is big then $\phi _{l(K_U+\Psi )}$ is birational.  Fix $k\in \mathbb{N}$ such
that
\[ 
\vol(X,k(K_X+B))> (2n)^n.
\]

\begin{claim}\label{c-temp} There is an integer $m_0>0$ such that $\phi_{m_0k(K_X+B)}$ is
birational.
\end{claim}
\begin{proof} By Lemma \ref{l-lcc}, there is a family $\map V.T.$ of subvarieties of $X$
such that for any two general points $x$, $y\in X$ there exists $t\in T$ and
$0\leq D_t\sim _\mathbb{R} k(K_X+B)$ such that $(X,B+D_t)$ is not Kawamata log terminal at
both $x$ and $y$ and there is a unique non Kawamata log terminal place whose centre $V_t$
contains $x$ (so that in particular $(X,B+D_t)$ is log canonical at $x$).  Let
$\nu \colon\map V^\nu.V.=V_t$ be the normalisation.  By Theorem \ref{t-4.2} there exists a
$\mathbb{Q}$-divisor $\Theta$ on $V^\nu$ such that
\begin{enumerate}
\item $(K_X+B+D_t)|_{V^\nu}-(K_{V^\nu}+\Theta)$ is pseudo-effective,
\item $\coeff (\Theta )\subset D(I_0)$, and
\item there exists $\psi \colon\map U.V^\nu.$ a log resolution of $(V^\nu,\Theta )$ such
that if $\Psi =\psi ^{-1}_*\Theta +\Exc(\psi )$ then $(K_U+\Psi )\geq (K_X+B)|_U$.  In
particular $(K_U+\Psi)$ is big.
\end{enumerate} 

By induction, $\phi _{l(K_U+\Psi)}$ is birational and so $\phi _{l(K_{V^\nu}+\Theta )}$ is  birational
as well. 

Let $x$, $y\in X$ be general points.  We may assume that $x\in V$ is a general point.  If
$v=\dim V$ and $H_i\in |l(K_{V^\nu}+\Theta )|$ are general divisors passing through $x$
and we set
\[
H=\frac v{v+1} (H_1+\ldots+ H_{v+1}),
\] 
then $x$ is an isolated component of the non Kawamata log terminal locus of
$(V^\nu,\Theta +H)$.  If in addition $y\in V$, then, arguing as above, and at the expense
of replacing $H$ by something $\mathbb{Q}$-linearly equivalent to $3H$ (or indeed any
multiple of $H$ greater than $2$), we may arrange that $(V^\nu,\Theta +H)$ is not log
canonical at $y$.  Since
\[
(K_X+B+D_t)|_{V^\nu}-(K_{V^\nu}+\Theta )
\]
is pseudo-effective, we may pick
\[
\tilde {H}\sim _\mathbb{R} {(k+1)vl}(K_X+B)
\] 
such that $\tilde H|_V=H$ in a neighbourhood of $x\in V$. Let
$\lambda=\lct _x(V^\nu,\Theta;H)$, then $\lambda \leq 1$.   

By inversion of adjunction
\begin{enumerate} 
\item $x\in X$ is a non Kawamata log terminal centre of $(X,B+D_t+\lambda \tilde H)$,
\item $(X,B+D_t+\lambda \tilde H)$ is log canonical at $x\in X$, and
\item $(X,B+D_t+\lambda \tilde H)$ is not Kawamata log terminal at $y\in X$ and 
it is not log canonical at $y$ if $y\in V$.  
\end{enumerate}

Since $K_X+B$ is big, we may write $K_X+B\sim _\mathbb{R} A+E$ where $E\geq 0$ and $A$ is
ample.  As $x$ and $y$ are general we may assume that $x$ and $y$ don't belong to $E$.  If
$y$ does not belong to $V$ then $x$ and $y$ belong to different connected components of
the non kawamata log terminal locus.  In this case we can increase $\lambda$ a little bit
and use $A$ to make $(X,B+D_t+\lambda \tilde H)$ not log canonical at $y$.  Using $A$ to
tie-break (cf. Proposition \ref{p-mincentre}) we may assume that $x$ is an isolated non
kawamata log terminal centre.

By Lemma \ref{l-pb}, $\phi _{K_X+\lceil t(K_X+B)\rceil }$ is birational for any
$t\geq (k+1)v$.  We claim there is an inequality
\[
(m+1)(K_X+B)\geq K_X+\lceil m(K_X+B)\rceil
\] 
for any integer $m>0$ which is divisible by $q$.  Grant this for the time being.  It
follows that that $\phi _{(m+1)(K_X+B)}$ is birational for any integer $m$ divisible by
$q$ such that $m> (k+1)v$.  Claim \ref{c-temp} now follows.

To see the inequality note that if $k\in \mathbb{N}$, then
$\lceil k/r\rceil \leq k/r+(r-1)/r$ and so since $m/q\in \mathbb{N}$, we have
\begin{align*} 
\lceil m(\frac{r-1+\frac{a}{q}}{r})\rceil &\leq m(\frac{r-1+\frac{a}{q}}{r})+\frac {r-1}r\\
                                          &\leq  (m+1)(\frac{r-1+\frac{a}{q}}{r}).
\end{align*} 
Since the coefficients of $B$ are of the form $\frac{r-1+\frac aq}r$, we have shown that
$(m+1)B\geq \lceil mB\rceil$.
\end{proof}

If $\vol(X,K_X+B)\geq 1$ then let $k=2(n+1)$.  The result follows in this case.
Therefore, we may assume that $\vol(X,K_X+B)<1$ and we pick $k\in \mathbb{N}$ such that
\[
(2n)^n\leq \vol(X,k(K_X+B))< (4n)^n.
\]
But then $\vol (X,m_0k(K_X+B))\leq (4m_0n)^n$.  By Theorem \ref{t-volddc1} the set of
these volumes satisfies the DCC and so there exists a constant $\delta >0$ such that
$\vol (K_X+B)\geq \delta$.  In particular
\[
k=\max (\lfloor \frac {2n}\delta\rfloor+1,2(n+1)).\qedhere
\]
\end{proof}

\section{Birational boundedness: the general case}

The purpose of this section is to prove \cite[1.4]{HMX14}:

\begin{theorem}\label{t-bb} Fix $n\in\mathbb{N}$ and a set $I\subset [0,1]$ which satisfies the DCC. 
Let $\mathfrak{D}$ be the set of projective log canonical pairs $(X,B)$ such that
$\dim X=n$ and $\coeff (B)\subset I$.

Then there is a constant $\delta >0$ and a positive integer $m$ such that
\begin{enumerate}
\item the set 
\[
\{\, \vol(X,K_X+B)\ \,|\, (X,B)\in \mathfrak{D} \,\}
\]
also satisfies the DCC,
\item if $\vol(X,K_X+B)>0$ then $\vol(X,K_X+B)\geq \delta$, and
\item if $K_X+B$ is big then $\phi _{m(K_X+B)}$ is birational.
\end{enumerate}
\end{theorem}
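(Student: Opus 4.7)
The plan is to reduce Theorem \ref{t-bb} to the hyperstandard case Theorem \ref{t-bbw} by a coefficient truncation, combined with Proposition \ref{p-BB} and Corollary \ref{c-voldcc}. For each $q\in\mathbb{N}$, let $J_q=D(\{j/q:1\le j\le q\})$, which is hyperstandard, and define $\tau_q(b)=\lfloor qb\rfloor/q$. Given $(X,B)\in\mathfrak{D}$ with $B=\sum b_iB_i$, set $B_q=\sum\tau_q(b_i)B_i$; then $\coeff(B_q)\subset J_q$, $B_q\le B$, so $(X,B_q)$ remains log canonical, and the inequality $\lfloor m(K_X+B_q)\rfloor\le\lfloor m(K_X+B)\rfloor$ gives an inclusion of linear systems $|m(K_X+B_q)|\subset|m(K_X+B)|$. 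Thus birationality of the truncated system transfers to the original.

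For (3), given $(X,B)\in\mathfrak{D}$ with $K_X+B$ big, continuity of the volume function on the big cone (\cite[2.2.45]{Lazarsfeld04a}) gives $\vol(X,K_X+B_q)\to\vol(X,K_X+B)>0$ as $q\to\infty$, so $K_X+B_q$ is big for $q\gg 0$. Theorem \ref{t-bbw}(3) applied with coefficient set $J_q$ produces $m_q$ with $\phi_{m_q(K_X+B_q)}$, hence $\phi_{m_q(K_X+B)}$, birational. This gives birationality pointwise, with the cutoff $q$ depending on the pair. For (1), suppose for contradiction that $(X_i,B_i)\in\mathfrak{D}$ has strictly decreasing volumes $v_i\to v_\infty\ge 0$. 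In the principal case $v_\infty>0$, the uniform lower bound $v_i>v_\infty$ allows a single $q$ (and hence a single $m$) to work for the entire tail; Proposition \ref{p-BB} then yields log birational boundedness of the tail, and Corollary \ref{c-voldcc} gives DCC of the tail's volumes, contradicting strict decrease. The degenerate case $v_\infty=0$ is handled by a parallel peeling argument, or ruled out in concert with proving (2). Part (2) then follows from (1) as the infimum of a DCC set of positive values, and the resulting uniform lower bound $\delta>0$ feeds back into the argument for (3) to produce a single $m$ working for all of $\mathfrak{D}$.

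The main obstacle is the intertwining of (1), (2), (3): uniform $m$ in (3) requires the lower bound from (2), which needs the DCC of (1), which in turn requires uniform $m$ via Proposition \ref{p-BB}. The resolution is the observation that any violation of DCC must accumulate somewhere in $[0,1]$, and above any strictly positive accumulation point the pointwise argument is already uniform; the case of accumulation at $0$ (corresponding to a violation of (2) along the sequence) has to be ruled out by a separate but parallel application of the same machinery, bootstrapping the three statements simultaneously. A secondary technical point is verifying that the truncation $B\mapsto B_q$ is compatible with the rounding in the definition of $|m(K_X+B)|$, which follows from $B_q\le B$ via $\lfloor mB_q\rfloor\le\lfloor mB\rfloor$.
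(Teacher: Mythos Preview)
Your reduction to the hyperstandard case via coefficient truncation is the right idea, and is exactly how the paper proves part (3) in Theorem \ref{t-cb-c}. But the step you flag as ``the main obstacle'' is a genuine gap that you have not closed. You assert that a uniform lower bound $v_i>v_\infty>0$ on the volumes ``allows a single $q$ (and hence a single $m$) to work for the entire tail''. This is precisely the statement that needs proof, and it is essentially equivalent to the pseudo-effective threshold bound (Theorem \ref{t-psef}): since $B_q\ge(1-\tfrac{1}{q\alpha})B$ where $\alpha=\min(I\cap(0,1])$, bigness of $K_X+B_q$ for a uniform $q$ would follow from knowing that $K_X+\gamma B$ is big for some uniform $\gamma<1$. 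Continuity of volume on a \emph{fixed} $N^1(X)$ gives you nothing uniform across varying $X$; nor does a lower bound on $\vol(X,K_X+B)$ by itself control how fast $\vol(X,K_X+tB)$ drops to zero as $t\downarrow$ the threshold.

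The paper does not bootstrap (1), (2), (3) against each other in the way you sketch. Instead it first proves Theorem \ref{t-c} (your part (3), with uniform $m$) outright, via a four-theorem induction on dimension (Theorems \ref{t-b}, \ref{t-c}, \ref{t-d}, \ref{t-a}). The specific input that makes the truncation uniform is Theorem \ref{t-psef}, whose proof uses the anticanonical volume bound (Theorem \ref{t-b}) in dimension $n$ together with Theorem \ref{t-bbw} applied to auxiliary pairs on a Mori fibre space, and the latter in turn rests on Theorems \ref{t-d} and \ref{t-a} in dimension $n-1$. Once the uniform $m$ is in hand, Proposition \ref{p-BB} and Corollary \ref{c-voldcc} finish (1) and (2) exactly as you say. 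So your outline is missing the entire inductive engine that produces the uniform $q$; the phrases ``parallel peeling argument'' and ``bootstrapping the three statements simultaneously'' are placeholders for what is in fact the bulk of the work.
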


The proof is by induction on the dimension.  We will prove the following four statements (\cite{HMX14}).
\begin{theorem}[Boundedness of the anticanonical volume]\label{t-b} Fix $n\in \mathbb{N}$ and a 
set $I\subset [0,1)$ which satisfies the DCC.  Let $\mathfrak{D}$ be the set of Kawamata
log terminal pairs $(X,B)$ such that $X$ is projective, $\dim X=n$, $K_X+B\equiv 0$, and
$\coeff (B)\subset I$.

Then there exists a constant $M>0$ depending only on $n$ and $I$ such that
$\vol(X,-K_X)<M$ for any pair $ (X,B)\in \mathfrak{D}$.
\end{theorem}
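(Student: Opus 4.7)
The plan is to proceed by induction on $n$, simultaneously with Theorem \ref{t-bb}. The base case $n=1$ is immediate since on $\mathbb{P}^1$ one has $\vol(-K_X) = 2 - \sum b_j \leq 2$. For the inductive step, assume the result in dimensions less than $n$ and that Theorem \ref{t-bb} has been established in dimension $n$. Suppose for contradiction there is a sequence $(X_i,B_i) \in \mathfrak{D}$ with $V_i := \vol(X_i,-K_{X_i}) \to \infty$. Since $K_{X_i}+B_i \equiv 0$, we have $-K_{X_i} \equiv B_i$ is a big $\mathbb{R}$-divisor of volume $V_i$. Applying a Tsuji-style multiplicity argument (as in Claim \ref{c-Tsuji}, with $D=-K_{X_i}$) together with Proposition \ref{p-mincentre} and Lemma \ref{l-lcc}, I produce, for a very general point $x_i \in X_i$, an effective $\mathbb{R}$-divisor $D_i \sim_{\mathbb{R}} \lambda_i(-K_{X_i})$ with $\lambda_i = O(V_i^{-1/n}) \to 0$ such that $(X_i, B_i + D_i)$ is log canonical but not Kawamata log terminal at $x_i$, with a unique non Kawamata log terminal place whose centre $W_i \ni x_i$ has dimension $d < n$ (after passing to a subsequence so that $d$ is constant).

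Next I invoke the adjunction Theorem \ref{t-4.2}. Applied to the klt pair $(X_i,B_i)$ with auxiliary boundary $B' = D_i$ and the family of centres $W_i$ swept out as $x_i$ varies generically, this produces a boundary $\Theta_i \geq 0$ on the normalization of $W_i$ with coefficients in a fixed hyperstandard set $D(I_0) \supset I$, such that $(K_{X_i}+B_i+D_i)|_{W_i}-(K_{W_i}+\Theta_i)$ is pseudo-effective. Using $K_{X_i}+B_i \equiv 0$, this yields
\[
\vol(W_i, K_{W_i}+\Theta_i) \leq \vol(W_i, D_i|_{W_i}) = \lambda_i^{d}\,\vol(W_i, -K_{X_i}|_{W_i}).
\]
A standard intersection-theoretic estimate (using that $x_i$ is very general, so $W_i$ varies in a family of subvarieties of $X_i$ with bounded degree with respect to an auxiliary linear system derived from $|m(-K_{X_i})|$) shows $\vol(W_i, -K_{X_i}|_{W_i}) \leq C\,V_i^{d/n}$ with $C$ depending only on $n$. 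Combining, $\vol(W_i, K_{W_i}+\Theta_i)$ is bounded above independently of $i$.

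The final step is to turn this into a contradiction. If $K_{W_i}+\Theta_i$ is big for infinitely many $i$, then by the inductive Theorem \ref{t-bb}(1) in dimension $d$ the volumes $\vol(W_i, K_{W_i}+\Theta_i)$ lie in a DCC set bounded above, hence take only finitely many values, and by Theorem \ref{t-bb}(3) the pairs $(W_i,\Theta_i)$ lie in a log birationally bounded family; if instead $K_{W_i}+\Theta_i$ is not big, one runs an MMP on $W_i$ and extracts a log Calabi-Yau pair of dimension $\leq d$, to which the inductive hypothesis Theorem \ref{t-b} itself applies. In either case one obtains a uniform bound on the birational geometry of $W_i$. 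The main obstacle, and the technical heart of the argument, is the last propagation step: transferring this control on the non Kawamata log terminal centre $W_i$ back to an upper bound on the ambient invariant $V_i = \vol(X_i,-K_{X_i})$. Here one exploits that $x_i$ was arbitrary, so that varying $x_i$ (and possibly the auxiliary divisor $D_i$) generates enough non-klt centres to cover $X_i$ and reconstruct $-K_{X_i}$ up to numerical equivalence via the pseudo-effectivity in Theorem \ref{t-4.2}; combined with the Calabi-Yau identity $K_{X_i}+B_i \equiv 0$, this forces $V_i$ to remain bounded, contradicting $V_i \to \infty$.
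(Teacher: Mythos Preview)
Your proposal has several genuine gaps.  First, your induction scheme as stated is circular: you assume Theorem \ref{t-bb} in dimension $n$, but in the paper's logic Theorem \ref{t-bb}$_n$ depends on Theorem \ref{t-c}$_n$, which in turn depends on Theorem \ref{t-b}$_n$ (via Theorems \ref{t-psef} and \ref{t-cb-c}).  You only actually invoke \ref{t-bb} in dimension $d<n$, so perhaps this is a slip, but it needs fixing.  Second, Theorem \ref{t-4.2} only applies when $\coeff(B)\subset D(I_0)$ for a \emph{finite} set $I_0$; you cannot invoke it for an arbitrary DCC set $I$.  Third, the ``standard intersection-theoretic estimate'' $\vol(W_i,-K_{X_i}|_{W_i})\leq C\,V_i^{d/n}$ is neither standard nor true in general: the restricted volume of a big divisor on a subvariety is not controlled by the ambient volume in this way.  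Most seriously, your final paragraph is not a proof.  You correctly identify ``transferring control on $W_i$ back to an upper bound on $V_i$'' as the technical heart, and then simply assert it follows from varying $x_i$ and the Calabi--Yau identity.  There is no mechanism here: boundedness or log birational boundedness of the centres $(W_i,\Theta_i)$ does not by itself bound $\vol(X_i,-K_{X_i})$.

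The paper's argument (Theorem \ref{t-da-b}) takes a completely different route that makes the back-propagation automatic.  Instead of working on the centre via subadjunction, one \emph{extracts} the unique non-klt place as a divisor $E$ on a model $\nu\colon X'\to X$, then runs a $(K_{X'}+\Phi')\equiv -E$ MMP ending in a Mori fibre space $X''\to W$ on which the strict transform $E''$ is ample.  Divisorial adjunction (Theorem \ref{tSLA}) on $E''$ produces a pair $(E'',\Diff_{E''}((1-\eta)B''))$ of dimension $\leq n-1$ with $K_{E''}+\Diff_{E''}((1-\eta)B'')\equiv 0$, coefficients in a DCC set derived from $I$, and $\eta<\delta\leq 2n/\vol(X,-K_X)^{1/n}$.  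Now Theorem \ref{t-a}$_{n-1}$ ensures the relevant pairs stay log canonical as $\delta\to 0$, and Theorem \ref{t-d}$_{n-1}$ (ACC for numerically trivial pairs) directly forbids $\eta\to 0$.  Since $\eta$ is explicitly bounded above in terms of $\vol(X,-K_X)^{-1/n}$, this immediately bounds $\vol(X,-K_X)$; no separate ``propagation'' step is required.
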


\begin{theorem}[Effectively birational]\label{t-c} Fix $n\in \mathbb{N}$ and a set
$I\subset [0,1]$ which satisfies the DCC.  Let $\mathfrak{B}$ be the set of log canonical
pairs $(X,B)$ such that $X$ is projective, $\dim X=n$, $K_X+B$ is big, and
$\coeff (B)\subset I$.

Then there exists a positive integer $m=m(n,I)$ such that $\phi _{m(K_X+B)}$ is birational
for any $(X,B)\in \mathfrak{B}$.
\end{theorem}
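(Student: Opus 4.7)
The plan is to proceed by induction on the dimension $n$, with Theorem \ref{t-bbw} (the hyperstandard case), Theorem \ref{t-b} (bound on anticanonical volume), and Theorem \ref{t-volddc1} (DCC for volumes of log birationally bounded pairs) as the essential inputs. The overall strategy is the three-part template already used in the proof of Theorem \ref{t-bbw}: cut out a non Kawamata log terminal centre through two general points via Lemma \ref{l-lcc}, descend to this centre by adjunction, invoke the inductive hypothesis on the centre to get birationality of a pluricanonical map, and lift birationality back to $X$ via Lemma \ref{l-pb}.

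Concretely, after the standard reductions (log resolution, then replacing $B$ by $\{B\}+(1-\epsilon)\lfloor B\rfloor$ for $0<\epsilon\ll 1$, then passing to the log canonical model via \cite{BCHM10}) I may assume $(X,B)$ is Kawamata log terminal with $K_X+B$ ample. Theorem \ref{t-volddc1}, combined with the inductive hypothesis in dimension $n-1$, forces the set of volumes $\vol(X,K_X+B)$ for $(X,B)$ satisfying our hypotheses to satisfy the DCC, hence to be bounded below by some $\delta=\delta(n,I)>0$. This lets me fix $k=k(n,I)\in\mathbb{N}$ with $\vol(X,k(K_X+B))>(2n)^n$ uniformly. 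Lemma \ref{l-lcc} then produces, for two general points $x,y\in X$, a divisor $0\leq D\sim_{\mathbb{R}}k(K_X+B)$ so that $(X,B+D)$ is log canonical at $x$, not Kawamata log terminal at both $x$ and $y$, and has a unique non Kawamata log terminal place whose centre $V$ contains $x$. The crucial step is then to exhibit a boundary $\Theta$ on the normalisation $W=V^\nu$, with $\coeff(\Theta)$ lying in a DCC set $I'=I'(I)\subset[0,1]$ depending only on $I$, such that $(K_X+B+D)|_W-(K_W+\Theta)$ is pseudo-effective and, for a log resolution $\psi\colon W'\to W$, the divisor $\Omega=\psi_*^{-1}\Theta+\Exc(\psi)$ satisfies $K_{W'}+\Omega\geq(K_X+B)|_{W'}$. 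Given such $(W,\Theta)$, the inductive hypothesis furnishes $l=l(n-1,I')$ with $\phi_{l(K_W+\Theta)}$ birational, and the tie-breaking procedure from the proof of Theorem \ref{t-bbw} (intersect with $l(K_W+\Theta)$ through $x$, pull back to $X$, perturb using ampleness of $K_X+B$, invoke Proposition \ref{p-mincentre} to isolate $x$, and apply Lemma \ref{l-pb}) produces the required uniform $m=m(n,I)$.

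The principal obstacle is the construction of the adjunction boundary $\Theta$ for a general DCC coefficient set, since Theorem \ref{t-4.2} provides exactly this data but only for hyperstandard $I$, because its proof relies on the Galois cover supplied by Lemma \ref{l-cover}, which is specific to hyperstandard coefficients. My plan to bypass this is to reduce the general DCC case to the hyperstandard case by a contradiction argument. Suppose no uniform $m$ exists; then there is a sequence $(X_k,B_k)$ of log canonical pairs with $\coeff(B_k)\subset I$, $K_{X_k}+B_k$ big, and $\phi_{k(K_{X_k}+B_k)}$ not birational. Using DCC of $I$ and a standard diagonal extraction, I may pass to a subsequence along which, for each component $B_{k,i}$ appearing in the support, the coefficient $b_{k,i}$ is non-decreasing in $k$ with some limit $b_i^\infty\in[0,1]$. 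Fixing $q\in\mathbb{N}$ large enough that $1/q<\min I$, I choose hyperstandard approximations $c_i\in I_0=\{j/q:0\leq j\leq q\}$ with $c_i\leq b_{k,i}$ for $k\gg 0$ and with $b_i^\infty-c_i$ small. Setting $B_k':=\sum c_iB_{k,i}\leq B_k$, Theorem \ref{t-bbw} applied to $(X_k,B_k')$ gives a uniform $m_0=m_0(n,I_0)$ with $\phi_{m_0(K_{X_k}+B_k')}$ birational provided $K_{X_k}+B_k'$ is big; since $B_k'\leq B_k$ this forces $\phi_{m_0(K_{X_k}+B_k)}$ birational, the required contradiction. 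The technically delicate point is securing bigness of $K_{X_k}+B_k'$ uniformly in $k$: this is where Theorem \ref{t-b} and the DCC lower bound $\delta$ on $\vol(X_k,K_{X_k}+B_k)$ enter, letting me control the volume drop under the coefficient perturbation $B_k\mapsto B_k'$ and hence choose the $c_i$ close enough to $b_i^\infty$ so that $\vol(X_k,K_{X_k}+B_k')$ remains strictly positive for all large $k$.
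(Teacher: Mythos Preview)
Your overall strategy---reduce to the hyperstandard case by approximating $B$ from below by a divisor $B'$ with coefficients in some $I_0=\{j/q\}$ and then invoke Theorem \ref{t-bbw}---is exactly the paper's approach. However, there is a genuine gap in how you justify bigness of $K_X+B'$, and it is circular in the form you have written it.

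In your second paragraph you claim that Theorem \ref{t-volddc1} together with the inductive hypothesis forces the set of volumes $\vol(X,K_X+B)$ to satisfy the DCC, giving a uniform lower bound $\delta>0$. But Theorem \ref{t-volddc1} requires as input a uniform $k$ with $\phi_{k(K_X+B)}$ birational and $\vol(X,k(K_X+B))$ bounded, which is precisely the content of Theorem \ref{t-c} in dimension $n$. So this step is circular. You then invoke this same $\delta$ in your third paragraph to ``control the volume drop under the coefficient perturbation $B_k\mapsto B_k'$''; without it, nothing prevents $\vol(X_k,K_{X_k}+B_k')$ from being zero even when the coefficient perturbation is small, since volume is not uniformly continuous across an unbounded family.

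The paper fills this gap with a separate and substantial result, Theorem \ref{t-psef}: there is a constant $\beta<1$, depending only on $n$ and $I$, such that the pseudo-effective threshold satisfies $\inf\{t:K_X+tB\text{ big}\}\leq\beta$. Its proof runs a minimal model program to a Mori fibre space, restricts to a general fibre $(Y,\Gamma)$ with $-K_Y$ ample and $K_Y+\lambda\Gamma\equiv 0$, and then uses Theorem \ref{t-b} in dimension $\le n$ together with Theorem \ref{t-bbw} and Corollary \ref{c-voldcc} to bound $\lambda$ away from $1$. Once $\beta$ is in hand, the approximation step is immediate and requires no contradiction argument: choose $q$ with $(1-\beta)\min(I\cap(0,1])>1/q$, pick $\Delta_0$ with $\beta B\leq\Delta_0\leq B$ and $\coeff(\Delta_0)\subset I_0$, and apply Theorem \ref{t-bbw} directly. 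Your sketch gestures at Theorem \ref{t-b} but does not supply the Mori fibre space reduction that turns the anticanonical volume bound into a pseudo-effective threshold bound; that is the missing idea.
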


\begin{theorem}[The ACC for numerically trivial pairs]\label{t-d} Fix $n\in \mathbb{N}$
and a DCC set $I\subset [0,1]$.

Then there is a finite subset $I_0\subset I$ such that if 
\begin{enumerate}
\item $(X,B)$ is an $n$-dimensional projective log canonical pair,
\item $\coeff (B)\subset I$, and 
\item $K_X+B\equiv 0$, 
\end{enumerate}then the coefficients of $B$ belong to $I_0$.
\end{theorem}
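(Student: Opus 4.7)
\begin{sproof}
The plan is to argue by induction on $n$, descending to lower dimension via a Mori fibre space obtained from Proposition \ref{p-MF} combined with Shokurov adjunction. The base case $n=1$ is immediate: $K_X+B \equiv 0$ gives $2g - 2 + \sum_j b_j = 0$, so $\sum_j b_j \leq 2$, and the DCC hypothesis on $I$ then forces only finitely many tuples of coefficients in $I$ to satisfy this equation.

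For the inductive step, I would argue by contradiction. Suppose there is a sequence $(X_i, B_i = \sum_j b_{i,j} B_{i,j})$ of $n$-dimensional projective log canonical pairs with $K_{X_i} + B_i \equiv 0$, $\coeff(B_i) \subset I$, and, after passing to a subsequence and re-indexing, $b_{i,1}$ strictly increasing to some $b^{\infty} \in (0,1]$. Replacing each $X_i$ by a $\mathbb{Q}$-factorial dlt modification (Theorem \ref{t-model}), I may assume $(X_i, B_i)$ is dlt and $\mathbb{Q}$-factorial; let $S_i$ denote the strict transform of $B_{i,1}$. Since $K_{X_i} + B_i - \epsilon S_i \equiv -\epsilon S_i$ is not pseudo-effective for small $\epsilon > 0$ (outside a degenerate case discussed below), Proposition \ref{p-MF} produces a birational contraction $f_i$ to some $Y_i$ and a Mori fibre space $\psi_i \colon Y_i \to Z_i$ with $K_{Y_i} + f_{i*}B_i \sim_{\mathbb{R}} \psi_i^{*} L_i$ and $f_{i*}S_i$ dominating $Z_i$.

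On a general fibre $F_i$ of $\psi_i$, which has dimension strictly less than $n$ whenever $\dim Z_i > 0$, Shokurov adjunction (Theorem \ref{tSLA}) gives a log canonical pair $(F_i, \Theta_i)$ with $K_{F_i} + \Theta_i \equiv 0$ and $\coeff(\Theta_i) \subset D(I)$, and $D(I)$ still satisfies DCC. The trace of $f_{i*}S_i$ on $F_i$ contributes a component of $\Theta_i$ whose coefficient has the form $\tfrac{r-1 + \sum_k n_k b_{i,k}}{r}$ with $n_1 \geq 1$, and is therefore still strictly increasing in $i$. The inductive hypothesis applied in dimension $\dim F_i < n$ forces this coefficient into a finite subset of $D(I)$, contradicting the choice of sequence.

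The main obstacle is the case $\dim Z_i = 0$: then $Y_i$ is itself a Fano-type $n$-fold with $K_{Y_i} + f_{i*}B_i \equiv 0$, and no reduction of dimension occurs. Here one must instead invoke boundedness of such Fano-type pairs via Theorem \ref{t-b} (boundedness of the anticanonical volume) together with a standard bounded-family argument, so that the $Y_i$ come from a bounded family and intersection-number considerations trap the $b_{i,1}$ in a finite set. This forces Theorem \ref{t-d} to be proved jointly with Theorems \ref{t-b} and \ref{t-c} as part of a single induction on $n$. The remaining degenerate subcase, in which $-S_i$ is pseudo-effective so that Proposition \ref{p-MF} does not apply directly, is handled by running a different MMP on a suitable combination of components whose coefficients vary, or alternatively by using the canonical bundle formula to descend via an Iitaka fibration with base of positive dimension.
\end{sproof}
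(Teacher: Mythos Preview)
Your overall strategy---induction on dimension, dlt modification, running an MMP to reach a Mori fibre space, and restricting to a general fibre---matches the paper's in broad outline. However, there is a genuine gap in your treatment of the case $\dim Z_i = 0$.

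You propose to handle the Picard-rank-one case by invoking Theorem \ref{t-b} (boundedness of the anticanonical volume) together with a ``standard bounded-family argument''. But bounded anticanonical volume for klt pairs does \emph{not} imply that the underlying varieties form a bounded family: that is essentially the BAB conjecture, which is not available here. Without boundedness of the $Y_i$, there is no way to trap the coefficients $b_{i,1}$ via intersection numbers. The paper's induction scheme in fact uses Theorem \ref{t-c} (effectively birational) in dimension $n$, not Theorem \ref{t-b}, at this step.

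The paper's argument for the $\rho=1$ case is quite different from what you sketch. First, it shows that any component $B_j$ meeting $\lfloor B\rfloor$ has coefficient in a finite set $I_1\subset I$, via Shokurov adjunction to a component $S\subset\lfloor B\rfloor$, the inductive hypothesis in dimension $n-1$, and Lemma \ref{l-5.2}. This step is what allows one, after reaching $\rho(X')=1$, to conclude that $\lfloor B\rfloor=0$, i.e.\ the pair is klt. Then, with $m=m(n,I)$ from Theorem \ref{t-c}, one shows that each interval $I\cap[(l-1)/m,\,l/m)$ contains at most one element: if $i_1<i_2$ both lie in such an interval and $(X,B)$ is klt with $\rho=1$, $K_X+B\equiv 0$, and some coefficient equal to $i_1$, one bumps that coefficient up to $i_2$ on a log resolution to make the log canonical divisor big, so $\phi_{m(K_{X'}+B')}$ is birational; but $\lfloor m\nu_*B'\rfloor/m\leq B$, contradicting $K_X+B\equiv 0$. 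This interval argument is the missing idea.

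Two smaller points. First, when you restrict to a general fibre $F_i$ of $\psi_i$, this is ordinary restriction, not Shokurov adjunction: the coefficients of $f_{i*}B_i|_{F_i}$ lie in $I$ itself, and the coefficient of the trace of $S_i$ is still $b_{i,1}$, not of the form $\tfrac{r-1+\sum n_kb_{i,k}}{r}$. This does not break your argument (the coefficient is still strictly increasing), but the justification is misstated. Second, your ``degenerate subcase'' where $-S_i$ is pseudo-effective cannot occur: $S_i$ is a nonzero effective divisor on a projective variety. (Also note that Proposition \ref{p-MF} as stated requires $S\leq\lfloor B\rfloor$, which your $S_i$ need not satisfy; the paper simply runs the $(K_X+B-b_iB_i)$-MMP directly, observing that each step is $B_i$-positive.)
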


\begin{theorem}[The ACC for the LCT]\label{t-a} Fix $n\in \mathbb{N}$ and a  set
$I\subset [0,1]$ which satisfies the DCC.  Then there exists a constant $\delta >0$ such
that if
\begin{enumerate}
\item $(X,B)$ is an $n$-dimensional log pair with $\coeff (B)\in I$,
\item $(X,\Phi )$ is Kawamata log terminal for some $\Phi\ge 0$ and  
\item $B'\geq (1-\delta )B$ where $(X,B')$ is a log canonical pair,
\end{enumerate}
then $(X,B)$ is log canonical.
\end{theorem}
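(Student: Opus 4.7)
The plan is to proceed by induction on $n$, assuming Theorems \ref{t-b}--\ref{t-a} have been established in dimensions $<n$; in \cite{HMX14} the four theorems are proved by a combined induction, so this is not circular. The base case $n=1$ is trivial, since on a smooth curve every boundary $B=\sum b_jp_j$ with $b_j\in[0,1]$ yields a log canonical pair. For the inductive step, I would argue by contradiction. Suppose no such $\delta$ exists; then there are $n$-dimensional pairs $(X_i,B_i)$ with $\coeff(B_i)\subset I$, klt pairs $(X_i,\Phi_i)$, and boundaries $B_i'\ge(1-1/i)B_i$ such that $(X_i,B_i')$ is log canonical but $(X_i,B_i)$ is not. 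Replacing $B_i'$ by $B_i'\wedge B_i$ one may assume $B_i'\le B_i$. Let $\lambda_i\in[0,1)$ be the largest value such that $(X_i,D_i)$ is log canonical, where $D_i:=(1-\lambda_i)B_i'+\lambda_i B_i$; then $(X_i,D_i)$ is log canonical but not Kawamata log terminal, and $D_i\ge (1-1/i)B_i$, so the coefficients of $D_i$ differ from those of $B_i$ by at most $b_{i,j}/i$.

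After localizing analytically at a closed minimal non Kawamata log terminal centre $W_i\subset X_i$ of $(X_i,D_i)$, I would apply Theorem \ref{t-model} together with a suitable MMP argument over $X_i$ to produce a birational model $f_i\colon Y_i\to X_i$ with a single irreducible exceptional divisor $E_i$ satisfying $f_i(E_i)=W_i$, $\rho(Y_i/X_i)=1$, and $a_{E_i}(X_i,D_i)=-1$. Writing $K_{Y_i}+E_i+\Gamma_i=f_i^*(K_{X_i}+D_i)$, Shokurov adjunction (Theorem \ref{tSLA}) provides a log canonical pair $(E_i,\Theta_i)$ with $K_{E_i}+\Theta_i=(K_{Y_i}+E_i+\Gamma_i)|_{E_i}$ whose coefficients lie in the derived set $D(I')$, where $I'\supset I$ is the DCC set generated by the coefficients of the $D_i$. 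Since $f_i^*(K_{X_i}+D_i)|_{E_i}$ is numerically trivial along the fibres of $E_i\to W_i$, after a further dlt modification and MMP over $W_i$ one obtains a Mori fibre space whose general fibre $F_i$ carries a projective log canonical pair $(F_i,\Xi_i)$ of dimension $<n$ with $K_{F_i}+\Xi_i\equiv 0$.

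By Theorem \ref{t-d} applied in dimension $<n$, the coefficients of $\Xi_i$ must lie in a finite subset $I_0\subset D(I')$. On the other hand, the adjunction formula expresses each coefficient of $\Xi_i$ as $(r-1+\sum n_j\delta_{i,j})/r$ with $\delta_{i,j}$ the coefficients of $D_i$, which by construction differ from the (DCC) coefficients of $B_i$ by $O(1/i)$. Passing to a subsequence, finiteness of $I_0$ forces these perturbed expressions to stabilize; but since the limiting coefficients must also coincide with their $B_i$-counterparts as $i\to\infty$, one concludes that $\lambda_i=1$ for $i\gg 0$, contradicting $(X_i,B_i)$ not being log canonical. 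The main obstacle is the extraction step: producing a model with a single exceptional divisor $E_i$ over the minimal non Kawamata log terminal centre and guaranteeing that the restriction of $f_i^*(K_{X_i}+D_i)$ to $E_i$ descends to a numerically trivial pair on a lower dimensional Mori fibre. This uses Theorem \ref{t-model}(3) when $W_i$ is a divisor and a more delicate special-termination MMP when $W_i$ has higher codimension, and requires tracking the perturbation $B_i-D_i$ through the Hurwitz-type formulas of adjunction carefully enough to detect it via the ACC for numerically trivial pairs.
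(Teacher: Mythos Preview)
Your approach has a genuine gap at the point where you invoke Theorem \ref{t-d}. To apply the ACC for numerically trivial pairs to $(F_i,\Xi_i)$ you need the coefficients of $\Xi_i$ to lie in a \emph{fixed} DCC set independent of $i$. But the coefficients of your interpolated divisors $D_i$ do not lie in such a set: each coefficient is within $O(1/i)$ of some element of $I$, and the union over all $i$ of such perturbations typically fails the DCC (a sequence approaching any $b\in I$ strictly from below already violates it). So there is no legitimate $I'$ to which Theorem \ref{t-d} applies, and no finite $I_0$ is actually produced. Your concluding step is also not justified: even if the coefficients of $\Xi_i$ stabilised along a subsequence, the adjunction expression $(r-1+\sum n_j\delta_{i,j})/r$ involves integers $r,n_j$ that may vary with $i$, and a fixed output value does not pin down the inputs $\delta_{i,j}$, so one cannot read off $\lambda_i=1$.

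The paper's argument (Theorem \ref{t-c-a}) avoids Theorem \ref{t-d} entirely; the decisive input from lower dimension is Theorem \ref{t-psef}, a consequence of Theorems \ref{t-b} and \ref{t-c} which you assume inductively but never use. One works with $\lambda=\lct(X,B)$ directly, extracts a single non-klt place $E$ of $(X,\lambda B)$ via tie-breaking and Theorem \ref{t-model}(3), and restricts. The point is then to consider the \emph{unperturbed} different $\Diff_E(\nu_*^{-1}B)$, whose coefficients do lie in the fixed DCC set $D(I)$: by Theorem \ref{t-a} in dimension $n-1$ the pair $(E,\Diff_E(\nu_*^{-1}B))$ is log canonical once $\lambda$ exceeds the inductive constant, and $K_E+\Diff_E(\nu_*^{-1}B)$ is ample over $X$, hence big after adding a sufficiently ample pullback $\nu^*H|_E$. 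Theorem \ref{t-psef} then yields a constant $\beta<1$ with $K_E+t\,\Diff_E(\nu_*^{-1}B)+\nu^*H|_E$ big for all $t>\beta$; since $K_E+\lambda\,\Diff_E(\nu_*^{-1}B)\le K_E+\Diff_E(\lambda\nu_*^{-1}B)\equiv_X 0$ forces the $t=\lambda$ divisor to be non-big, one concludes $\lambda\le\beta$, giving the required uniform $\delta$.
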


\begin{proof}[Proof of Theorems \ref{t-b}, \ref{t-c}, \ref{t-d}, and \ref{t-a}] The proof
is by induction on the dimension.  The case $n=1$ is obvious.  The proof subdivided into
the following 4 steps.
\begin{enumerate} 
\item Theorems \ref{t-d} and \ref{t-a} in dimension $n-1$ imply Theorem \ref{t-b} in
dimension $n$ (cf. Theorem \ref{t-da-b}),
\item Theorem \ref{t-c} in dimension $n-1$ and Theorem \ref{t-b} in dimension $n$ imply
Theorem \ref{t-c} in dimension $n$ (cf. Theorem \ref{t-cb-c}),
\item Theorem \ref{t-d} in dimension $n-1$ and Theorem \ref{t-c} in dimension $n$ imply
Theorem \ref{t-d} in dimension $n$ (cf. Theorem \ref{t-dc-d}), and
\item Theorems \ref{t-b}, \ref{t-c}, \ref{t-d} and \ref{t-a} in dimension $n-1$ imply
Theorem \ref{t-a} in dimension $n$ (cf. Theorem \ref{t-c-a}). \qedhere
\end{enumerate}
\end{proof}

\begin{proof}[Proof of Theorem \ref{t-bb}] (3) follows from Theorem \ref{t-c}.  To prove
(1), we may fix $M>0$ and consider pairs $(X,B)$ such that $0<\vol (X,K_X+B)\leq M$.  By
Proposition \ref{p-BB} the pairs $(X,B)$ are log birationally bounded. (1) now follows
from Corollary \ref{c-voldcc} and (2) is an easy consequence of (1).
\end{proof}

\subsection{Boundedness of the anticanonical volume}

\begin{theorem} \label{t-da-b} Theorems \ref{t-d} and \ref{t-a} in dimension $n-1$ imply
Theorem \ref{t-b} in dimension $n$.
\end{theorem}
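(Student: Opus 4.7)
The plan is to argue by contradiction. Assume there is an infinite sequence $(X_i, B_i) \in \mathfrak{D}$ of $n$-dimensional KLT pairs with $K_{X_i} + B_i \equiv 0$ and $v_i := \vol(X_i, -K_{X_i}) \to \infty$. By Theorem \ref{t-model} we may assume each $X_i$ is $\mathbb{Q}$-factorial. Since $B_i \equiv -K_{X_i}$ is big with $\vol(X_i, B_i) = v_i \to \infty$, a standard volume/multiplicity estimate exactly as in Claim \ref{c-Tsuji} or Lemma \ref{l-lcc} produces, for every sufficiently general $x_i \in X_i$, an effective $\mathbb{Q}$-divisor $D_i \sim_{\mathbb{Q}} \lambda_i(-K_{X_i}) \equiv \lambda_i B_i$ with $\lambda_i = O(v_i^{-1/n}) \to 0$, such that $(X_i, B_i + D_i)$ fails to be KLT at $x_i$. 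Perturbing by a small multiple of the big class $-K_{X_i}$ and tie-breaking (Proposition \ref{p-mincentre}), we arrange that there is a unique minimal non-KLT centre $V_i \ni x_i$ (necessarily $\dim V_i \leq n-1$), that $(X_i, B_i + D_i)$ is log canonical at $x_i$, and that there is a unique non-KLT place above $V_i$.

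Let $\nu_i \colon W_i \to V_i$ be the normalisation. Reducing to hyperstandard coefficients via Lemma \ref{l-cover} and applying Theorem \ref{t-4.2} (or, more directly, Kawamata subadjunction), we obtain an effective $\mathbb{Q}$-divisor $\Theta_i$ on $W_i$ and a pseudo-effective class $J_i$ with
\[
K_{W_i} + \Theta_i + J_i \sim_{\mathbb{Q}} (K_{X_i} + B_i + D_i)|_{W_i} \equiv \lambda_i B_i|_{W_i},
\]
where $\coeff(\Theta_i)$ lies in a DCC set $I' \supset I$ produced from $I$ by the Shokurov different (Theorem \ref{tSLA}). Since $\lambda_i \to 0$ and $B_i|_{W_i}$ is effective with coefficients in a DCC set, subtracting $\lambda_i B_i|_{W_i}$ from $\Theta_i$ (it remains effective for $i$ large because the DCC coefficients are bounded away from $0$) and absorbing the pseudo-effective $J_i$ into an effective boundary (using the bigness of $-K_{X_i}|_{W_i}$ to supply a small effective representative) yields a numerically trivial log canonical pair $(W_i', \Psi_i)$ of dimension $\leq n-1$ whose boundary coefficients still lie in a DCC set $I''$.

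By Theorem \ref{t-d} in dimension $n-1$ applied to $(W_i', \Psi_i)$, the coefficients of $\Psi_i$ lie in a finite subset $I_0 \subset I''$. However, the Shokurov different formula shows that the coefficients of $\Theta_i$ along the components of $\Supp(D_i)$ meeting $V_i$ depend explicitly on $\lambda_i$, and hence take infinitely many values as $\lambda_i \to 0$; this contradicts finiteness unless $\lambda_i$ is bounded away from $0$. Here Theorem \ref{t-a} in dimension $n-1$ closes the remaining loophole: applied to the pair $(W_i, \Theta_i)$ and the auxiliary divisor $D_i|_{W_i}$, it forces the associated log canonical threshold to be at least some universal $\delta > 0$, which bounds $\lambda_i$ uniformly from below and contradicts $\lambda_i \to 0$. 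The main obstacle is the pseudo-effective moduli part $J_i$: it is not effective, so the absorption step needed to apply Theorem \ref{t-d} (a statement about ordinary log canonical pairs) has to be carried out quantitatively and compatibly with the DCC structure on $\Theta_i$. The careful bookkeeping of coefficients in the subadjunction step is precisely what makes this absorption possible without destroying the DCC property that drives the final contradiction.
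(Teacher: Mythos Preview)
Your approach diverges from the paper's and contains a genuine gap at the subadjunction step.

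The paper does \emph{not} use Kawamata subadjunction on the non-KLT centre $V$. Instead, after producing $\Phi=(1-\delta)B+\delta G$ with $K_X+\Phi\equiv 0$ log canonical but not KLT (your $\lambda_i$ is essentially the paper's $\delta$), it \emph{extracts} the unique non-KLT place $E$ as a divisor via $\nu\colon X'\to X$ with $\rho(X'/X)=1$ (Theorem~\ref{t-model}(3)), then runs the $(K_{X'}+\Phi')\equiv -E$ MMP to a Mori fibre space $X''\to W$ on which the strict transform $E''$ is ample on a general fibre. Ordinary Shokurov adjunction on the \emph{divisor} $E''$ then gives pairs $(E'',B_{E''})$ and $(E'',\Phi_{E''})$ with $\coeff(B_{E''})\subset D(I)$, $\Phi_{E''}\geq (1-\delta)B_{E''}$, and $K_{E''}+\Diff_{E''}((1-\eta)B'')\equiv 0$ for some $0<\eta<\delta$. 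Theorem~\ref{t-a}$_{n-1}$ forces $(E'',B_{E''})$ log canonical once $\delta$ is small, and Theorem~\ref{t-d}$_{n-1}$ rules out $\eta\to 0$. No moduli part ever appears.

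Your route via Kawamata subadjunction (or Theorem~\ref{t-4.2}) hits exactly the obstacle you flag and do not resolve: the pseudo-effective class $J_i$. You propose to ``absorb $J_i$ into an effective boundary'' using bigness of $-K_{X_i}|_{W_i}$, but any effective representative of $J_i$ has uncontrolled coefficients and destroys the DCC structure needed to invoke Theorem~\ref{t-d}$_{n-1}$. Likewise, ``subtracting $\lambda_i B_i|_{W_i}$ from $\Theta_i$'' does not produce a divisor with DCC coefficients: the coefficients of $\Theta_i$ arise from the different formula and need not dominate $\lambda_i B_i|_{W_i}$ componentwise, nor do the differences lie in any fixed DCC set. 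Separately, Theorem~\ref{t-4.2} as stated requires the coefficient set to be hyperstandard $D(I_0)$ with $I_0$ finite; Lemma~\ref{l-cover} is an ingredient in its proof, not a reduction from general DCC sets to hyperstandard ones. Finally, your invocation of Theorem~\ref{t-a}$_{n-1}$ at the end is not in the form that theorem takes: it compares a pair $(X,B)$ to a log canonical $(X,B')$ with $B'\geq(1-\delta)B$, not an LCT bound for an auxiliary divisor.

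The fix is structural: replace subadjunction on the centre by divisorial extraction of the place, run the MMP, and do codimension-one adjunction on $E''$. This eliminates $J_i$ entirely and makes both the DCC control and the numerical triviality on the $(n-1)$-dimensional pair immediate.
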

\begin{proof} Suppose that $(X,B)$ is a pair in $\mathfrak{D}$ with $\vol(X,-K_X)>0$.  By
Theorem \ref{t-model}, there exists a small proper birational morphism
$\nu \colon\map X'.X.$ such that $X'$ is $\mathbb{Q}$-factorial.  Let
\[
K_{X'}+B'=\nu ^*(K_X+B)\equiv 0.
\]  
Then $(X',B')\in \mathfrak{D}$ and $\vol(X',-K_{X'})=\vol(X,-K_X)$.  Replacing $X$ by
$X'$, we may therefore assume that $X$ is $\mathbb{Q}$-factorial.  If $x\in X$ is a
general point, then by a standard argument (cf. \cite[10.4.12]{Lazarsfeld04b}), there
exists $G\sim _{\mathbb{R}}-K_{X}$ with
\[
\mult_x(G) >\frac 1 2 \big(\vol(X,-K_X)\big)^{1/n}.
\]  
It follows that
\[
\sup \{\, t\geq 0 \,|\, \text{$(X,tG)$ is log canonical} \,\} < \frac {2n} {\big(\vol(X,-K_X) \big)^{1/n}}
\]
(cf. \cite[9.3.2]{Lazarsfeld04b}).  Therefore, we may assume that
\[
(X,\Phi:=(1-\delta )B+\delta G)
\] 
is log canonical but not Kawamata log terminal for some 
\[
\delta <2n/(\vol(X,-K_X))^{1/n}.
\]
Note that 
\[
K_X+\Phi\sim_\mathbb{R} (1-\delta)(K_X+B)\equiv 0.
\]  
By tie breaking (cf. Proposition \ref{p-mincentre}), we may assume that $(X,\Phi)$ has a
unique non Kawamata log terminal centre say $Z$ with a unique non Kawamata log terminal
place say $E$.  Let $\nu\colon\map X'.X.$ be the corresponding divisorial extraction so
that $\rho(X'/X)=1$ (cf. Theorem \ref{t-model}(3)) and the exceptional locus is given by
the prime divisor $E\subset X'$.  Let $\Phi'=\nu^{-1}_*\Phi$ and $B'=\nu ^{-1}_*B$ and
write
\[
K_{X'}+B'+aE=\nu ^*(K_X+B),\qquad K_{X'}+\Phi '+E=\nu ^*(K_X+\Phi)
\]
where $a<1$.  In particular $ K_{X'}+\Phi '+E$ is purely log terminal.  We now run the
$K_{X'}+\Phi '\equiv -E$ minimal model program $\psi\colon\rmap X'.X''.$ (cf. Proposition
\ref{p-MF}) until we obtain a Mori fibre space $\pi \colon\map X''.W.$.  Let
$E''=\psi _*E$, $\Phi''=\psi _*\Phi$, and $B''=\psi _*B'$, so that $E''$ is $\pi$-ample.
Note that since $K_{X'}+\Phi '+E\equiv 0$, it follows that $K_{X''}+\Phi '' +E''$ is
purely log terminal.  After restricting to a general fibre, we may assume that $E''$ is
ample and we write
\[
(K_{X''}+B''+E'')|_{E''}=K_{E''}+B_{E''},\qquad (K_{X''}+\Phi ''+E'')|_{E''}=K_{E''}+\Phi _{E''}.
\]
Note that 
\begin{enumerate}
\item $\coeff (B_{E''})\subset D (I)$ (by Theorem \ref{tSLA}, since
$\coeff (B'')\subset I$),
\item $K_{E''}+\Phi _{E''}$ is Kawamata log terminal (since $K_{X''}+\Phi '' +E''$ is
purely log terminal), and
\item $\Phi _{E''}\geq (1-\delta )B_{E''}$ (by Lemma \ref{l-diff}, since
$\Phi ''\geq (1-\delta )B''$).
\end{enumerate} 

If $\vol(X,-K_X)\gg 0$, then $\delta \ll 1$, and so by Theorem \ref{t-a} in dimension
$\leq n-1$, we have that $K_{E''}+B_{E''}$ is log canonical.

Since $\Phi ''\geq (1-\delta )B''$ and $K_{X''}+\Phi''+ {E''}\equiv 0$, we have 
\[
K_{X''}+(1-\eta)B''+{E''}\equiv _W 0 \qquad \text{for some} \qquad 0<\eta<\delta,
\] 
and so 
\[
K_{E''}+\Diff _{E''} \big((1-\eta)B'' \big)\equiv 0.
\]  
By  Lemma \ref{l-diff}, 
\[
\Diff _{E''} \big((1-\eta)B'' \big)\geq (1-\eta)B_{E''}.
\]
We claim that $0$ is not an accumulation point for the possible values of $\eta$.  If this
where not the case then there would be a decreasing sequence $\eta _k>0$ with
$\lim \eta _k=0$.  But then, it is easy to see that the coefficients of
$\Diff_{E''}((1-\eta_k )B'')$ belong to a DCC set and so we obtain a contradiction by
Theorem \ref{t-d} in dimension $n-1$.  Since 
\[
\eta<\delta \leq 2n/\vol(X,-K_X),
\] 
it follows that $\vol(X,-K_X)$ is bounded from above.
\end{proof}

\subsection{Birational boundedness}

\begin{theorem}\label{t-psef} Assume that Theorem \ref{t-c} holds in dimension $n-1$ and
Theorem \ref{t-b} holds in dimension $n$.  Then there is a constant $\beta <1$ such that
if $(X,B)$ is an $n$-dimensional projective log canonical pair where $K_X+B$ is big and
$\coeff (B)\subset I$, then the pseudo-effective threshold satisfies
\[
\lambda :=\inf \{\, t\in \mathbb{R} \,|\, \text{$K_X+tB$ is big} \,\}\leq \beta.
\]
\end{theorem}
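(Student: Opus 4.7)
My plan is to argue by contradiction, reducing through a Mori fibre space to a statement about numerically trivial pairs in lower dimension. Suppose for contradiction that there is a sequence $(X_i,B_i)$ of pairs satisfying the hypotheses whose pseudo-effective thresholds $\lambda_i$ tend to $1$ from below; after passing to a subsequence I may assume the $\lambda_i$ are strictly increasing. By Theorem \ref{t-model} I first replace each $(X_i,B_i)$ by a $\mathbb{Q}$-factorial dlt modification, which does not change $\lambda_i$. Then I pick $t_i\in(\lambda_i-2^{-i},\lambda_i)$ with $t_i\uparrow 1$, so that $(X_i,t_iB_i)$ is Kawamata log terminal (because $t_i<1$ and the coefficients of $B_i$ lie in $[0,1]$) and $K_{X_i}+t_iB_i$ is not pseudo-effective. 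By BCHM the $(K_{X_i}+t_iB_i)$-MMP over a point terminates with a birational contraction $f_i\colon X_i\dashrightarrow Y_i$ followed by a Mori fibre space $\psi_i\colon Y_i\to Z_i$; on a general fibre $F_i$ the pair $(F_i,t_iB_i^Y|_{F_i})$ is then KLT and the class $A_i:=-K_{F_i}-t_iB_i^Y|_{F_i}$ is ample, where $B_i^Y=f_{i*}B_i$.

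The heart of the argument is the analysis of the fibre $F_i$. I will choose an effective $\mathbb{Q}$-divisor $E_i\sim_{\mathbb{Q}}A_i$ so that $(F_i,t_iB_i^Y|_{F_i}+E_i)$ is log canonical with
\[
K_{F_i}+t_iB_i^Y|_{F_i}+E_i\equiv 0.
\]
The coefficients of $t_iB_i^Y|_{F_i}$ lie in $\tilde I:=\bigcup_i t_iI\cup\{1\}$, which one checks is itself DCC: any infinite strictly decreasing sequence $t_{i_k}b_k\in\tilde I$ would force $b_k\in I$ to approach a positive limit strictly from above (since $t_{i_k}\uparrow 1$), producing an infinite strictly decreasing sequence in $I$ and contradicting the DCC of $I$. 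Applying Theorem \ref{t-d} in dimension $\dim F_i$—available in dimension $n-1$ by the induction hypothesis, and reached in dimension $n$ by combining with Theorem \ref{t-b} in dimension $n$ to handle the weak Fano case $\dim Z_i=0$—the coefficients of $t_iB_i^Y|_{F_i}+E_i$ must lie in a fixed finite subset $I_0\subset\tilde I$.

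The contradiction will now follow by pigeonhole combined with the DCC of $I$. For each $i$, pick a coefficient of $t_iB_i^Y|_{F_i}$ of the form $t_ib_i$ with $b_i\in\coeff(B_i)\subset I$; this is possible in the main case where the general fibre $F_i$ meets at least one component of $B_i^Y$. Since all the values $t_ib_i$ lie in the finite set $I_0$, some $c\in I_0$ satisfies $t_{i_k}b_{i_k}=c$ for infinitely many $k$; but then $b_{i_k}=c/t_{i_k}$ is a strictly decreasing sequence in $I$ (because the $t_{i_k}$ are strictly increasing to $1$) converging to $c$ from above, directly violating the DCC of $I$.

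The hardest step in this plan is the uniform choice of $E_i$ in the second paragraph: I need the coefficients of $E_i$ to live in a single DCC subset of $[0,1]$ independent of $i$, so that Theorem \ref{t-d} can be invoked with a common ambient DCC set. This is exactly where Theorem \ref{t-b} in dimension $n$ enters decisively, since the uniform bound on $\vol(F_i,-K_{F_i})$ obtained from it gives enough control on sections of multiples of $A_i$ to select the $E_i$ from uniformly bounded linear systems. A secondary concern is ruling out the degenerate case $B_i^Y|_{F_i}=0$ where no component of $B_i^Y$ meets the general fibre of $\psi_i$; this has to be excluded by exploiting the bigness of $K_{X_i}+B_i$, which forces $B_i^Y$ to have at least one $\psi_i$-horizontal component whenever $\lambda_i$ is close enough to $1$.
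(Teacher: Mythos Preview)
Your plan has two genuine gaps, and the paper takes a quite different route precisely to avoid them.

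\textbf{Gap 1: circularity in dimension $n$.} When $\dim Z_i=0$ the general fibre $F_i$ has dimension $n$, so you need Theorem~\ref{t-d} in dimension $n$. Your claim that this is ``reached in dimension $n$ by combining with Theorem~\ref{t-b}'' is not correct: in the inductive scheme of the paper, Theorem~\ref{t-d}$_n$ is deduced from Theorem~\ref{t-c}$_n$ (see Theorem~\ref{t-dc-d}), and Theorem~\ref{t-c}$_n$ in turn uses Theorem~\ref{t-psef} (see Theorem~\ref{t-cb-c}). So invoking Theorem~\ref{t-d}$_n$ here is circular. Theorem~\ref{t-b}$_n$ only bounds $\vol(F_i,-K_{F_i})$; it does not give the finiteness of coefficients that Theorem~\ref{t-d} provides, and there is no shortcut from the former to the latter at this stage of the induction.

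\textbf{Gap 2: the divisor $E_i$.} Even granting Theorem~\ref{t-d} in the relevant dimension, to apply it you need \emph{all} coefficients of $t_iB_i^Y|_{F_i}+E_i$ to lie in a single DCC set independent of $i$. You acknowledge this is the hardest step, but a volume bound on $-K_{F_i}$ does not produce $E_i\sim_{\mathbb Q}A_i$ with coefficients in a fixed DCC set: that would require a uniform bound on the Cartier index of $A_i$ or an effective non-vanishing statement, neither of which follows from Theorem~\ref{t-b}. Without such control, Theorem~\ref{t-d} simply does not apply to the pair $(F_i,t_iB_i^Y|_{F_i}+E_i)$.

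\textbf{What the paper does instead.} The paper avoids Theorem~\ref{t-d} entirely. First, the MMP is set up more carefully so that on the fibre one lands exactly at $K_{Y_i}+\lambda_i\Gamma_i\equiv 0$ with $\coeff(\Gamma_i)\subset I$ (not a rescaled set), and $-K_{Y_i}$ ample; this eliminates any need for an auxiliary $E_i$. Then Theorem~\ref{t-b}$_n$ bounds $\vol(Y_i,\lambda_i\Gamma_i)\le C$, and the already-established hyperstandard case (Theorem~\ref{t-bbw}) together with Corollary~\ref{c-voldcc} yields a uniform lower bound $\vol(Y_i',K_{Y_i'}+\Gamma_i')\ge\delta$. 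The inequality
\[
\delta\le\vol(Y_i,K_{Y_i}+\Gamma_i)=\Bigl(\tfrac{1-\lambda_i}{\lambda_i}\Bigr)^{n'}\vol(Y_i,\lambda_i\Gamma_i)\le\Bigl(\tfrac{1-\lambda_i}{\lambda_i}\Bigr)^{n'}C
\]
then forces $\lambda_i$ bounded away from $1$. This argument works uniformly in all dimensions $\le n$ (in particular it handles the case $\dim Z_i=0$ without separate treatment), and uses only the stated hypotheses.
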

\begin{proof} Suppose that we have a sequence of pairs $(X_i,B_i)$ with increasing
pseudo-effective thresholds $\lambda _i< \lambda _{i+1}$ such that $\lim \lambda _i=1$.
In particular we may assume that $1>\lambda _i\geq 1/2$.

\begin{claim} We may assume that there is a sequence of $\mathbb{Q}$-factorial projective
Kawamata log terminal pairs $(Y_i,\Gamma _i)$ such that $\coeff (\Gamma _i)\subset I$,
$-K_{Y_i}$ is ample, $K_{Y_i}+\lambda _i\Gamma _i\equiv 0$ and $\dim Y_i\leq n$.
\end{claim}
\begin{proof} We may assume that $1\in I$.  As a first step, we will show that we may
assume that $(X,B)=(X_i,B_i)$ is log smooth.  Let $\nu \colon\map X'.X.$ be a log
resolution of $(X,B)$ and write 
\[
K_{X'}+B'=\nu ^*(K_X+B)+E
\] 
where $B'=\nu ^{-1}_*B+\Exc(\nu)$.  Note that $K_{X'}+B'$ is big and if $K_{X'}+tB'$ is
big, then so is $K_X+tB=\nu _*(K_{X'}+tB')$. Thus
\[
1>\lambda ':=\inf \{\, t\in \mathbb{R} \,|\, \text{$K_{X'}+tB'$ is big} \,\}\geq \lambda
\]
and we may replace $(X,B)$ by $(X',B')$.  Therefore we may assume that $(X,B)$ is log
smooth.

Since $K_X+B$ is big, we may pick an effective $\mathbb{Q}$-divisor
$D\sim _\mathbb{Q} K_X+B$ and so for $0<\epsilon \ll 1$ we have
\[
(1+\epsilon)(K_X+\lambda B)\sim _\mathbb{Q} K_X+\mu B+\epsilon D
\] 
where $0<\mu: =\lambda (1+\epsilon)-\epsilon <\lambda$ and $K_X+\mu B+\epsilon D$ is
Kawamata log terminal.  Since $\mu B+\epsilon D$ is big, by \cite{BCHM10} we may run the
$K_X+\mu B+\epsilon D$ minimal model program say $f\colon\rmap X.X'.$.  Since this is also
a $K_X+\lambda B$ minimal model program, we may assume that $K_{X'}+\lambda B'$ is nef and
Kawamata log terminal where $B'=f_*B$ and $D'=f_*D$.
 
We may now run a $K_{X'}+\mu B'$ minimal model program.  By Proposition \ref{p-MF} after
finitely many $K_{X'}+\mu B'+\epsilon D'$ flops $g\colon\rmap X'.X''.$ we obtain a
$K_{X''}+\mu B''+\epsilon D ''$-trivial contraction of fibre type $\map X''.Z.$ (where
$B''=g_*B'$ and $D''=g_*D'$) such that $\epsilon D'' $ is ample over $Z$.  Therefore
$-K_{X''}$ is ample over $Z$ (since $ B''\geq 0$ and $\rho (X''/Z)=1$).  It follows that
$K_{X''}+\lambda B''$ is Kawamata log terminal and $K_{X''}+\lambda B''\equiv _Z 0$.
Letting $(Y,\Gamma )=(F, B''|_F)$ where $F$ is a general fibre of $\map X''.Z.$, the claim
follows.
\end{proof}

Let $\nu _i\colon\map Y'_i.Y_i.$ be a log resolution of $(Y_i,\Gamma _i)$,
$D_i=(\Gamma _i)_{\rm red}$ and $\Gamma '_i$ (resp. $D'_i$) be the strict transform of
$\Gamma _i$ (resp. $D_i$) plus the $\nu _i$-exceptional divisors.  Since
$(Y_i,\lambda _i\Gamma _i)$ is klt, then for any $0<\delta \ll 1$,
\[
K_{Y'_i}+\Gamma '_i\geq  \nu _i^*(K_{Y_i}+(\lambda _i+\delta)\Gamma _i)\equiv \delta\nu _i^* \Gamma _i
\]
and so both $K_{Y'_i}+\Gamma '_i$ and $K_{Y'_i}+D'_i$ are big.  By Theorem \ref{t-b} in
dimension $n$ there exists a constant $C$ such that $\vol(Y_i,\lambda _i \Gamma_i)<C$.
Since $I$ satisfies the DCC, there exists a smallest non-zero element $\alpha \in I$.

\begin{claim} The pairs $(Y'_i,D'_i)$ are log birationally bounded.
\end{claim}
\begin{proof} Since $K_{Y'_i}+D'_i$ is big, then so is $K_{Y'_i}+\frac {r-1}r D'_i$ for
any $r\gg 0$.  If $\Theta _i:=\frac {r-1}r D'_i$ then
\begin{align*} 
\vol(Y'_i, K_{Y'_i}+\Theta _i)&\leq \vol(Y'_i,K_{Y'_i}+D'_i)\\
                             &\leq \vol(Y_i,K_{Y_i}+D_i)\\
                             &=\vol(Y_i,D_i-\lambda _i \Gamma _i)\\ 
                             &\leq \vol(Y_i,D_i)\\
                             &\leq \vol(Y_i,\frac {1} \alpha \Gamma _i)\\
                             &\leq \frac C {(\lambda _i \alpha )^{n'}}\\
                             &\leq C\left( \frac 2 \alpha \right)^{n'} 
\end{align*} 
where $n'=\dim Y_i$.  Since 
\[
\coeff(\Theta _i)\subset \{\, 1-\frac 1 r \,|\, r\in \mathbb{N}\,\}
\]
then by Theorem \ref{t-bbw}, it follows that there exists a constant $m>0$ such that
$\phi_{m(K_{Y'_i}+\Theta _i)}$ is birational.  By Proposition \ref{p-BB}, the pairs
$(Y'_i,\Theta _i)$ are log birationally bounded and hence so are the pairs $(Y'_i, D'_i)$.
\end{proof} 

By Corollary \ref{c-voldcc}, it follows that there exists a constant $\delta >0$ such
that 
\[
\vol(Y'_i,K_{Y'_i}+\Gamma'_i)\geq \delta.
\]
But then we have
\begin{align*} 
\delta &\leq\vol(Y'_i,K_{Y'_i}+\Gamma '_i)\\
       &\leq\vol(Y_i,K_{Y_i}+\Gamma _i)\\
       &=\vol(Y_i,\frac{1-\lambda _i}{\lambda _i}\lambda_i\Gamma _i)\\
       &=(\frac{1-\lambda _i}{\lambda _i})^{n'}\vol(Y_i,\lambda _i \Gamma _i)\\
       &\leq (\frac{1-\lambda _i}{\lambda _i})^{n'}C.
\end{align*} 
Thus, if
\[
\beta=\frac 1 {1+(\frac \delta C )^{1/n'}}<1,
\] 
then $\lambda_i\leq \beta$.
\end{proof}

\begin{theorem}\label{t-cb-c} Theorem \ref{t-c} in dimension $n-1$ and Theorem \ref{t-b}
in dimension $n$ imply Theorem \ref{t-c} in dimension $n$.
\end{theorem}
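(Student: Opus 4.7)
The plan is to deduce the general DCC case from the hyperstandard case already proved in Theorem \ref{t-bbw}, using the uniform pseudo-effective threshold bound supplied by Theorem \ref{t-psef}. After standard reductions (replacing $(X,B)$ by $(X',\nu^{-1}_*B+\Exc(\nu))$ for a log resolution $\nu$, which preserves log canonicity, the bigness of $K_X+B$, and the birationality of $\phi_{m(K_X+B)}$), I may assume $(X,B)$ is log smooth with coefficients in the still DCC set $I\cup\{1\}$, which I continue to denote $I$. Then Theorem \ref{t-psef} (available under the inductive hypotheses of the theorem) yields a constant $\beta=\beta(n,I)<1$, independent of the pair, such that $K_X+\beta B$ is big.

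The key step is an approximation from below by a divisor with hyperstandard coefficients. Since $I$ satisfies the DCC, $\alpha:=\min(I\cap(0,1])>0$. Choose $q\in\mathbb{N}$ depending only on $n$ and $I$ with $1/q<(1-\beta)\alpha$, set $I_0=\{j/q:1\leq j\leq q\}$, and put $J=D(I_0)$. For each nonzero coefficient $b_i$ of $B$ with $b_i<1$, the interval $[\beta b_i,b_i]$ has length $(1-\beta)b_i\geq(1-\beta)\alpha>1/q$, while the set $\{0,1/q,2/q,\ldots,1\}\subset J$ arising from the $r=1$ factor of $D(I_0)$ has spacing $1/q$, so there exists $\tilde b_i\in J$ with $\beta b_i\leq\tilde b_i\leq b_i$; when $b_i=1$, set $\tilde b_i=1\in J$. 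Setting $\tilde B=\sum\tilde b_iB_i$, we have $\beta B\leq\tilde B\leq B$, so $K_X+\tilde B\geq K_X+\beta B$ is big, $(X,\tilde B)$ remains log canonical, and $\coeff(\tilde B)\subset J$. Applying Theorem \ref{t-bbw} to $(X,\tilde B)$, with the fixed finite set $I_0$ depending only on $(n,I)$, produces an integer $m=m(n,I)$ such that $\phi_{m(K_X+\tilde B)}$ is birational. The inclusion $H^0(X,\ring X.(\lfloor m(K_X+\tilde B)\rfloor))\hookrightarrow H^0(X,\ring X.(\lfloor m(K_X+B)\rfloor))$ induced by $B\geq\tilde B$ shows that $\phi_{m(K_X+B)}$ is given by a linear system containing a birational one and is therefore itself birational.

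The main obstacle is the approximation step: its success rests on the pseudo-effective threshold $\beta$ being bounded uniformly away from $1$, depending only on $(n,I)$. Without such a uniform bound, no single finite hyperstandard set $I_0$ could serve the entire family, and the reduction to Theorem \ref{t-bbw} would fail. Theorem \ref{t-psef} is exactly what supplies this uniformity, which is why the anticanonical boundedness Theorem \ref{t-b} in dimension $n$ is a prerequisite.
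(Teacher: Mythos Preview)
Your proof is correct and follows essentially the same approach as the paper: invoke Theorem \ref{t-psef} to get a uniform pseudo-effective threshold $\beta<1$, use the gap $(1-\beta)\min(I\cap(0,1])$ to choose $q$ so that every coefficient $b_i$ can be approximated from below by some $j/q\in[\beta b_i,b_i]$, and then apply Theorem \ref{t-bbw} to the resulting hyperstandard pair. The only cosmetic differences are that the paper works directly with $(X,B)$ without the preliminary log resolution (harmless either way) and places the approximating coefficients in $I_0$ itself rather than passing through $J=D(I_0)$, but since you only actually use the multiples of $1/q$ this is the same thing.
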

\begin{proof} By Theorem \ref{t-psef} there exists a constant $\gamma <1$, such that
$K_X+\gamma \Delta$ is big.  Fix a positive integer $q$, such that
$(1-\gamma)\delta>\frac 1 q$, where $\delta=\min(I\cap (0,1])$ and let
$I_0=\{ \frac{1}{q},\frac 2 q,\ldots,\frac {q-1}q,1\}$.  It is easy to see that there
exists a $\mathbb{Q}$-divisor $\Delta _0$ such that
\[
\gamma \Delta\leq \Delta_0\leq \Delta \qquad \text{and} \qquad \coeff ( \Delta_0)\subset I_0.
\]
By Theorem \ref{t-bbw}, there exists a constant $m\in \mathbb{N}$ such that
$\phi_{m(K_X+\Delta_0)}$ is birational.  Since $\Delta_0\leq \Delta$,
$\phi _{m(K_X+\Delta)}$ is also birational.
\end{proof}

\subsection{ACC for numerically trivial pairs}

\begin{theorem} \label{t-dc-d} Theorem \ref{t-d} in dimension $n-1$ and Theorem \ref{t-c}
in dimension $n$ implies Theorem \ref{t-d} in dimension $n$.
\end{theorem}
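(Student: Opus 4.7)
The plan is to argue by contradiction and reduce to the induction hypothesis in dimension $n-1$, using the Mori fibre space structure produced by an MMP. Suppose the conclusion fails: by the DCC on $I$, we may extract a sequence $(X_k,B_k)\in\mathfrak{D}$ of $n$-dimensional log canonical pairs with $K_{X_k}+B_k\equiv 0$ and distinguished components $S_k\subset\Supp(B_k)$ whose coefficients $b_k:=\mult_{S_k}(B_k)\in I$ form a strictly increasing sequence.

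First I would replace each $(X_k,B_k)$ by a $\mathbb{Q}$-factorial dlt model using Theorem \ref{t-model}(1); this preserves $K+B\equiv 0$ as well as the component $S_k$ with its coefficient $b_k$. After arranging the presence of a suitable component $T_k\subset\lfloor B_k\rfloor$ (using either an existing coefficient-one component or an exceptional divisor of the dlt modification), I would run a $(K_{X_k}+B_k-\epsilon T_k)$-MMP via Proposition \ref{p-MF}, which is $(K_{X_k}+B_k)$-trivial at every step and terminates with a Mori fibre space $\phi_k\colon Y_k\to Z_k$. The pushed-forward pair $(Y_k,B_k^Y)$ still satisfies $K_{Y_k}+B_k^Y\equiv 0$ and the strict transform $S_k^Y$ retains coefficient $b_k$.

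If $\dim Z_k>0$, I would pass to a general fibre $F_k$ (of dimension $<n$) and apply Shokurov adjunction (Theorem \ref{tSLA}): the pair $(F_k,\Diff_{F_k}(B_k^Y))$ is log canonical, has numerically trivial canonical class, and has coefficients in the DCC set $D(I)$. Provided $S_k^Y$ meets $F_k$ non-trivially, which can be arranged by the choice of $T_k$, some coefficient of $\Diff_{F_k}(B_k^Y)$ is a strictly monotone function of $b_k$ by the explicit formula in Theorem \ref{tSLA}, so infinitely many distinct coefficients arise from the $F_k$; this contradicts Theorem \ref{t-d} in dimension $n-1$.

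If $\dim Z_k=0$, then $Y_k$ is a $\mathbb{Q}$-factorial Fano variety of Picard number one and $-K_{Y_k}\equiv B_k^Y$ is ample. Here Theorem \ref{t-b} in dimension $n$, already proved in step (1) of the induction, gives a uniform bound on $\vol(Y_k,-K_{Y_k})$; combined with tie-breaking (Proposition \ref{p-mincentre}) and a small perturbation by an effective $\mathbb{R}$-divisor in the class of $-K_{Y_k}$, this produces a log canonical pair $(Y_k,\Phi_k)$ with $K_{Y_k}+\Phi_k$ big and a unique minimal non-Kawamata log terminal centre $V_k\subsetneq Y_k$ carrying the coefficient $b_k$. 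Theorem \ref{t-c} in dimension $n$ then furnishes an integer $m$ with $\phi_{m(K_{Y_k}+\Phi_k)}$ birational, so by Proposition \ref{p-BB} the pairs $(Y_k,\Phi_k)$ are log birationally bounded, and Kawamata subadjunction (Theorem \ref{t-4.2}) provides a log canonical pair on $V_k$ with numerically trivial canonical class and DCC coefficients encoding $b_k$; Theorem \ref{t-d} in dimension $n-1$ then yields the contradiction. The main obstacle is precisely this last case: one must verify that the tie-breaking can be arranged so that the strictly increasing datum $b_k$ survives as a strictly varying coefficient on $V_k$ after Kawamata subadjunction, which is exactly what forces the simultaneous use of Theorems \ref{t-b} and \ref{t-c} in dimension $n$ in addition to the induction hypothesis in dimension $n-1$.
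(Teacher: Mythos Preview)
Your argument has a genuine gap in the Picard-rank-one case, and the setup leading there is also flawed. Running the MMP with respect to a component $T_k\subset\lfloor B_k\rfloor$ is the wrong choice: such $T_k$ need not exist (if $(X_k,B_k)$ is already $\mathbb{Q}$-factorial klt the dlt modification is small and extracts nothing), and even when it does, Proposition~\ref{p-MF} only guarantees that $T_k$ dominates $Z_k$, not $S_k$, so $S_k$ could be contracted or fail to meet a general fibre. The paper instead runs the $(K_{X_k}+B_k-b_kS_k)$-MMP; since $K_{X_k}+B_k\equiv 0$ this is $S_k$-positive at every step, so $S_k$ survives and dominates the base. (Restriction to a general fibre then keeps coefficients in $I$, not $D(I)$; no different is taken, so your appeal to Theorem~\ref{tSLA} there is misplaced, though the conclusion is the same.)

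The real difficulty is the case $\dim Z_k=0$, and your proposed route does not work. You invoke Theorem~\ref{t-b}, which is not among the stated hypotheses; but even granting it, tie-breaking produces a non-klt centre $V_k$ with no a priori relation to $S_k$, Kawamata subadjunction on $V_k$ gives only a pseudo-effective moduli part rather than a numerically trivial pair, Theorem~\ref{t-4.2} applies only to hyperstandard coefficients, and there is no mechanism forcing $b_k$ to appear among the coefficients on $V_k$---you yourself flag this as ``the main obstacle'' without resolving it. The paper's argument here is entirely different and uses only Theorem~\ref{t-c}: having reduced to $(X,B)$ klt with $\rho(X)=1$, let $m=m(n,I)$ be the constant from Theorem~\ref{t-c} and show that each interval $[(l-1)/m,\,l/m)$ meets $I$ in at most one point. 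If $i_1<i_2$ both lie in such an interval and some component of $B$ has coefficient $i_1$, replace $i_1$ by $i_2$ on that component and pass to a log resolution to obtain $(X',B')$ with $K_{X'}+B'$ big and $\coeff(B')\subset I$; then $\phi_{m(K_{X'}+B')}$ is birational, so $K_X+\lfloor m\nu_*B'\rfloor/m$ is big, but $\lfloor mi_2\rfloor=l-1\le mi_1$ forces $\lfloor m\nu_*B'\rfloor/m\le B$, contradicting $K_X+B\equiv 0$.
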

\begin{proof} Let $J_0$ be the finite subset given by applying Theorem \ref{t-d} in
dimension $\leq n-1$ for $J:=D(I)$ and $I_1 \subset I$ the finite subset defined in Lemma
\ref{l-5.2}.

Let $(X,B)$ be an $n$-dimensional projective log canonical pair such that $K_X+B\equiv 0$
and $\coeff (B)\subset I$.  By Theorem \ref{t-model} we may assume that $(X,B)$ is dlt and
in particular $(X,B)$ is klt if and only if $\lfloor B\rfloor =0$.  Let $B=\sum b_iB_i$
where $b_i\in I$.  If $B_i$ intersects a component $S$ of $\lfloor B\rfloor$, then let
$K_S+\Theta =(K_X+B)|_S$.  Note that the coefficients of $\Theta$ belong to the DCC set
$J=D(I)$ (cf. Theorem \ref{tSLA}).  Since $(S,\Theta )$ is log canonical and
$K_S+\Theta \equiv 0$, by Theorem \ref{t-d} in dimension $\leq n-1$, it follows that
$\coeff (\Theta ) \subset J_0$.  If $P$ is an irreducible component of $\Supp(B_i)|_S$,
then
\[
\mult_P(\Theta) =\frac {m-1+f+kb_i}m \qquad \text{for some $f\in J$ and $m$, $k\in \mathbb{N}$.}
\]
By Lemma \ref{l-5.2}, $b_i$ belongs to the finite subset $I_1\subset I$.

We may therefore assume that if $b_i\not \in I_1$, then $B_i\cap \lfloor B\rfloor =0$.
Pick one such component $B_i$ and run the $K_X+B-b_iB_i$ minimal model program with
scaling of an ample divisor.  Since
\[
K_X+B-b_iB_i\equiv -b_iB_i,
\]
every step of this minimal model program is $B_i$ positive and hence does not contract
$B_i$.  Since $K_X+B-b_iB_i\equiv -b_iB_i$ is not pseudo effective, after finitely many
steps we obtain a Mori fibre space
\[
\rmap X.X'.\map .Z..
\]
If at any point we contract a component $S$ of $\lfloor B\rfloor $, then the strict
transforms of $B_i$ and $S$ must intersect and so $b_i\in I_1$ contradicting our
assumptions.  In particular no components of $\lfloor B\rfloor $ are contracted.  If
$\dim Z>0$, then replacing $X$ by a general fibre of $\map X'.Z.$, we see that $b_i$
belongs to $J_0$ (by Theorem \ref{t-d} in dimension $\leq n-1$).  Therefore, we may assume
that $\dim Z=0$ and so $\rho (X')=1$ so that every component of the strict transform of
$\lfloor B\rfloor $ intersects the strict transform of $B_i$.  Arguing as above, if
$\lfloor B\rfloor \ne 0$, it follows that $b_i\in I_1$ which is a contradiction.
Therefore we may assume that $\lfloor B\rfloor =0$, that is, $(X,B)$ is Kawamata log
terminal.  Replacing $(X,B)$ by $(X',B')$ we may also assume that that $\rho(X)=1$.

Let $m=m(n,I)>0$ be the constant whose existence is guaranteed by Theorem \ref{t-c} in
dimension $n$, so that if $(X,C)$ is a projective $n$-dimensional log canonical pair such
that $K_X+C$ is big and $\coeff (C)\subset I$, then $\phi _{m(K_X+C)}$ is birational.  It
suffices to show that $I\cap [(l-1)/m,l/m)$ contains at most one element (for any integer
$1\leq l\leq m$).  Suppose to the contrary that $I\cap [(l-1)/m,l/m)$ contains two
elements say $i_1<i_2$.  We may assume that there is a Kawamata log terminal pair $(X,B)$
as above such that $B=\sum b_jB_j$ where $b_1=i_1$.  Let $\nu \colon\map X'.X.$ be a log
resolution and consider the pair $(X',B':=\nu ^{-1}_*(B+(i_2-i_1)B_1)+\Exc(\nu))$.  Since
$$K_{X'}+B '=\nu ^*(K_X+B)+(i_2-i_1)\nu ^{-1}_*B_1+F$$ where $F\geq 0$ and its support
contains $\Exc(\nu)$, it follows that $K_{X'}+B '$ is big and the coefficients of $B'$ are
in $I$. So by Theorem \ref{t-c}, $\phi _{m(K_{X'}+B')}$ is birational.  In
particular $$\lfloor m(K_{X}+\nu _*B')\rfloor =m(K_{X}+\lfloor m\nu _*B'\rfloor /m)$$ is
big.  Since
\[
(l-1)/m\le i_1<i_2<l/m,
\]
it follows that $\lfloor mi_2\rfloor =l-1$ and so $B\geq \lfloor m\nu _*B'\rfloor /m$.
But since $K_X+B\equiv 0$ this contradicts the bigness of
$K_{X}+\lfloor m\nu _*B'\rfloor /m$.
\end{proof}

\subsection{ACC for the log canonical threshold}

\begin{theorem}\label{t-c-a} Theorems \ref{t-b}, \ref{t-c}, and \ref{t-a} in dimension
$n-1$ imply Theorem \ref{t-a} in dimension $n$.
\end{theorem}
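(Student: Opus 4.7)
The plan is to argue by contradiction, reducing to the ACC for numerically trivial pairs in dimension $n-1$ (Theorem~\ref{t-d}), which is available as part of the overall induction described in the proof of Theorem~\ref{t-bb}. Suppose Theorem~\ref{t-a} fails in dimension $n$: then there is a sequence of data $(X_i,B_i,B_i',\Phi_i)$ with $\dim X_i=n$, $\coeff(B_i)\subset I$, $(X_i,\Phi_i)$ Kawamata log terminal, $B_i'\geq(1-\delta_i)B_i$ with $(X_i,B_i')$ log canonical and $(X_i,B_i)$ not log canonical, and with $\delta_i\to 0^+$. Replacing each $B_i'$ with the maximal log canonical perturbation $(1-\delta_i')B_i + (\delta_i'/\delta_i)(B_i' - (1-\delta_i)B_i)$ obtained from the log canonical threshold between $B_i'$ and $B_i$, we may assume there is a divisor $E_i$ over $X_i$ with $a_{E_i}(X_i,B_i')=-1$, while $\delta_i'\leq\delta_i$ still tends to $0$.

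Next I would extract $E_i$ and drop the dimension via adjunction. Since $(X_i,\Phi_i)$ is klt, Theorem~\ref{t-model}(2) lets us pass to a $\mathbb{Q}$-factorial modification of $X_i$; after replacing $E_i$ by a minimal non-klt place of $(X_i,B_i')$ (using Proposition~\ref{p-mincentre}(1)--(3) and tie-breaking with an ample perturbation), Theorem~\ref{t-model}(3) yields a projective birational morphism $\nu_i\colon Y_i\to X_i$ with $\rho(Y_i/X_i)=1$ and $\Exc(\nu_i)=E_i$ an irreducible divisor, satisfying
\[
K_{Y_i}+\nu_{i*}^{-1}B_i'+E_i=\nu_i^*(K_{X_i}+B_i').
\]
By Proposition~\ref{p-mincentre}(2), $E_i$ is normal. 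Shokurov adjunction (Theorem~\ref{tSLA}) then gives $K_{E_i}+\Theta_i=(K_{Y_i}+\nu_{i*}^{-1}B_i'+E_i)|_{E_i}$, with each coefficient of $\Theta_i$ of the form
\[
\frac{r-1+\sum_j n_j(1-\delta_i')b_{i,j}}{r},\qquad b_{i,j}\in I,\ r,n_j\in\mathbb{N},
\]
so $\coeff(\Theta_i)$ lies in a DCC set $J\subset[0,1]$ depending only on $I$.

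Then I would derive the contradiction. Since the right-hand side of adjunction is $\nu_i$-numerically trivial and $E_i$ is $\nu_i$-exceptional, restricting to a general fibre $F_i$ of the induced morphism $E_i\to\nu_i(E_i)$ produces a log canonical pair $(F_i,\Theta_i|_{F_i})$ of dimension at most $n-1$ with $K_{F_i}+\Theta_i|_{F_i}\equiv 0$ and coefficients in $J$. By Theorem~\ref{t-d} in dimension $\leq n-1$, these coefficients must lie in a fixed finite subset $J_0\subset J$. On the other hand, $a_{E_i}(X_i,B_i')=-1<a_{E_i}(X_i,0)$ forces the strict transform of some component $B_{i,j(i)}$ of $B_i$ to meet $E_i$ with positive multiplicity $n_{j(i)}$, so the adjunction formula produces a coefficient of $\Theta_i$ of the form $(r-1+n_{j(i)}(1-\delta_i')b_{i,j(i)}+\cdots)/r$ with $b_{i,j(i)}>0$. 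After passing to a subsequence to stabilise $r$, the $n_j$'s and the coefficient $b_{i,j(i)}=b^\star\in I$ (using the DCC of $I$), the resulting coefficient of $\Theta_i|_{F_i}$ depends strictly monotonically on $(1-\delta_i')$ as $\delta_i'\to 0^+$, producing an infinite strictly monotone sequence lying in the finite set $J_0$, a contradiction.

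The main obstacle is guaranteeing that the variation of $\delta_i'$ actually propagates into a strictly varying coefficient of $\Theta_i|_{F_i}$, rather than being absorbed by the integer shift $(r-1)/r$ or destroyed upon restriction to $F_i$. The restriction step is delicate when $\nu_i(E_i)$ has positive dimension: only those components of $\Theta_i$ that dominate $\nu_i(E_i)$ survive on the general fibre. Resolving this requires a careful choice of $E_i$ (for instance, by replacing Shokurov adjunction with the hyperstandard subadjunction of Theorem~\ref{t-4.2} applied to a tie-broken minimal non-klt centre) to ensure the varying boundary components intersect $E_i$ in divisors dominating $\nu_i(E_i)$. A diagonal subsequence argument using the DCC of $I$ and the boundedness of denominators then isolates a single non-stabilising coefficient from the numerator of the adjunction formula, supplying the required contradiction.
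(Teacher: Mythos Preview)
Your route is genuinely different from the paper's: you try to reduce to Theorem~\ref{t-d} (ACC for numerically trivial pairs) in dimension $\leq n-1$ by restricting the different on the extracted divisor $E_i$ to a general fibre $F_i$ of $E_i\to\nu_i(E_i)$, whereas the paper reduces to Theorem~\ref{t-psef} (the uniform bound on the pseudo-effective threshold, which is where Theorems~\ref{t-b} and~\ref{t-c} in dimension $n-1$ enter) applied to the pair $(E,\Diff_E(\nu^{-1}_*B))$.

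The obstacle you flag is a genuine gap, and the fixes you sketch do not close it. There is no mechanism forcing any component of $\nu_{i*}^{-1}B_i$ meeting $E_i$ to be horizontal for $E_i\to\nu_i(E_i)$, so the coefficient carrying the factor $(1-\delta_i')$ may simply vanish on $F_i$. Theorem~\ref{t-4.2} does not help: it controls the shape of the coefficients of the subadjoint boundary but says nothing about which components dominate the base. Your stabilisation step is also unjustified: the local index $r$ and the multiplicities $n_j$ in the adjunction formula need not be bounded across a sequence of klt varieties, so you cannot pass to a subsequence on which they are constant, and without that you cannot conclude that the surviving coefficient is strictly monotone in $\delta_i'$.

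The paper avoids both problems by never isolating a single coefficient. It works with one pair $(X,B)$ and its log canonical threshold $\lambda$, extracts $E$ as you do, and observes that
\[
K_E+\Diff_E(\nu^{-1}_*B)\sim_{\mathbb{Q},X}(1-\lambda)\,\nu^{-1}_*B|_E
\]
is relatively ample over $X$ (here one uses $\rho(X'/X)=1$ together with $a_E(X,0)>-1$). Adding the pullback of a sufficiently ample $H$ on $X$ makes $K_E+\Diff_E(\nu^{-1}_*B)+\nu^*H|_E$ ample, and since $K_E+\lambda\Diff_E(\nu^{-1}_*B)+\nu^*H|_E$ restricts to something $\leq 0$ on a general fibre of $E\to\nu(E)$, Theorem~\ref{t-psef} in dimension $\leq n-1$ forces $\lambda\leq\beta$. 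This uses only that the \emph{total} different is big enough, not that any particular component survives, which is exactly the leverage your approach lacks.
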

\begin{proof} Since $(X,\Phi )$ is klt, Theorem \ref{t-model} implies that there exists a
small birational morphism $\nu \colon\map X'.X.$ such that $X'$ is $\mathbb{Q}$-factorial.
Since $K_{X'}+\nu ^{-1}_*B=\nu ^*(K_X+B)$, it follows that $(X',\nu ^{-1}_*B)$ is log
canonical if and only if $(X,B)$ is log canonical. Replacing $X$, $\Phi$, and $B$ by $X'$,
$\nu ^{-1}_*\Phi$ and $\nu ^{-1}_*B$, we may assume that $X$ is $\mathbb{Q}$-factorial.

Let $\lambda $ be the log canonical threshold of $(X,B)$ so that $(X,\lambda B)$ is log
canonical but not Kawamata log terminal.  As we are assuming Theorem \ref{t-b} and Theorem
\ref{t-c} in dimension $n-1$, we know that Theorem \ref{t-psef} holds in dimension $n-1$.
Let $\beta <1$ be the constant defined by Theorem \ref{t-psef} in dimension $n-1$ (where
we take $D(I)$ to be the coefficient set).  Let $\mu<1$ be the constant defined by Theorem
\ref{t-a} in dimension $n-1$.  It suffices to show that if $\mu<\lambda <1$, then
$\lambda \leq \beta$.

If $\lambda B$ has a component of coefficient 1, then as
$\coeff (B)\subset I \subset [0,1]$, it follows that $\lambda =1$ and hence $(X,B)$ is log
canonical.  We may therefore assume that all non Kawamata log terminal centres of
$(X,\lambda B)$ have codimension $\geq 2$.  Since $(X,\Phi)$ is klt, by tie breaking
(cf. Proposition \ref{p-mincentre}), there exists a non Kawamata log terminal place $E$ of
$(X,\lambda B)$ and a log canonical pair $(X,\Psi)$ such that $E$ is the unique
non-Kawamata log terminal place of $(X,\Psi )$.  By Theorem \ref{t-model}, there exists a
projective birational morphism $\nu \colon\map X'.X.$ such that $\rho (X'/X)=1$,
$\Exc(\nu )=E$ is an irreducible divisor and
\[
K_{X'}+\lambda \nu ^{-1}_*B+E=\nu ^*(K_X+\lambda B)
\] 
is log canonical so that
\[
K_E+\Diff _{E}(\lambda\nu ^{-1}_* B)=(K_{X'}+\lambda \nu ^{-1}_*B+E)|_E
\] 
is log canonical.  Note that $$K_{X'}+\nu ^{-1}_*\Psi +E=\nu ^*(K_X+\Psi )$$ is plt and
hence $K_E+\Psi _E=( K_{X'}+\nu ^{-1}_*\Psi +E)|_E$ is klt.  Since $\lambda \leq 1$, then
by Lemma \ref{l-diff},
\[
\Diff _{E}(\lambda\nu ^{-1}_* B)\geq \lambda \Diff _E(\nu ^{-1}_*B).
\]  
As $\lambda>\mu$, Theorem \ref{t-a} in dimension $n-1$ implies that
$K_E+ \Diff _E(\nu ^{-1}_*B)$ is log canonical.  Let $H$ be a general sufficiently ample
divisor on $X$, since
\[
K_E+\Diff _E(\nu ^{-1}_*B)\sim_{\mathbb{Q},X}  \Diff _E(\nu ^{-1}_*B)-\Diff _E(\lambda \nu ^{-1}_*B)
\]
is ample over $X$, then
\[
K_E+\Diff _E(\nu ^{-1}_*B)+\nu ^*H|_E
\]
is ample and so by Theorem \ref{t-psef} in dimension $\leq n-1$,
$K_E+t\Diff _E(B)+\nu ^*H|_E$ is big for any $t>\beta$.  In particular since
\[
K_E+\lambda\Diff _{E}(\nu ^{-1}_* B)\leq K_E+\Diff _{E}(\lambda\nu ^{-1}_* B)\equiv _X 0
\] 
it follows that $\lambda \leq \beta$.
\end{proof}

We have the following corollary.
\begin{theorem}[The ACC for the log canonical threshold]\label{t-acc} Fix $n\in \mathbb{N}$ and sets
$I\subset [0,1]$, $J\subset (0,+\infty )$ which satisfy the DCC.  Let $\mathcal L$ be the
set of log canonical thresholds of pairs $(X,B)$ with respect to an $\mathbb{R}$-Cartier
divisor $D$ such that
\begin{enumerate}
\item $(X,B)$ is an $n$-dimensional log canonical pair,
\item $\coeff (B)\subset I$, and 
\item $\coeff (D)\subset J$.
\end{enumerate} 
Then $\mathcal L$ satisfies the ACC.
\end{theorem}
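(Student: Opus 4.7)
The plan is to deduce Theorem \ref{t-acc} from Theorem \ref{t-a} by a direct comparison argument. Suppose for contradiction there is a strictly increasing sequence $\lambda_1<\lambda_2<\cdots$ in $\mathcal{L}$, realized by triples $(X_k,B_k,D_k)$ with $\lambda_k=\lct(X_k,B_k;D_k)$. Since $J\subset(0,+\infty)$ satisfies the DCC it has a positive minimum $j_0$, and the requirement $\coeff(B_k+\lambda_k D_k)\subset[0,1]$ forces $\lambda_k j_0\leq 1$. Thus the sequence is bounded above by $1/j_0$ and converges to some $\lambda\in(0,1/j_0]$.

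The key step compares each pair $(X_k,B_k+\lambda_k D_k)$ with the ``limiting'' pair $(X_k,C_k)$, where $C_k:=B_k+\lambda D_k$. The coefficients of $C_k$ lie in the fixed set
\[
I'\ :=\ I\ \cup\ \lambda J\ \cup\ \{\,i+\lambda j\ :\ i\in I,\ j\in J\,\},
\]
which is again DCC: a finite union of DCC subsets of $\mathbb{R}_{\geq 0}$ is DCC, and their Minkowski sum is DCC too (from any sequence in a DCC set one extracts a non-decreasing subsequence by iterated minima, so a sequence in $A+B$ admits a sub-subsequence on which both summands are non-decreasing, precluding any infinite strictly decreasing sequence in the sum). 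Let $\delta=\delta(n,I')>0$ be the constant furnished by Theorem \ref{t-a} for dimension $n$ and coefficient set $I'$. For $k$ large enough that $\lambda_k/\lambda\geq 1-\delta$ we have
\[
B_k+\lambda_k D_k\ \geq\ (1-\delta)(B_k+\lambda D_k)\ =\ (1-\delta)C_k,
\]
and $(X_k,B_k+\lambda_k D_k)$ is log canonical by the very definition of the lct. Applying Theorem \ref{t-a} with $B=C_k$ and $B'=B_k+\lambda_k D_k$ would then force $(X_k,C_k)=(X_k,B_k+\lambda D_k)$ to be log canonical, contradicting $\lambda_k<\lambda$ being the log canonical threshold.

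The main obstacle is securing hypothesis (2) of Theorem \ref{t-a}, which requires $X_k$ to carry some klt pair $(X_k,\Phi_k)$. I would arrange this by first replacing $X_k$ with a $\mathbb{Q}$-factorial modification: if $(X_k,B_k)$ happens to be klt, Theorem \ref{t-model}(2) gives a small such modification, which keeps the lct unchanged; in general one uses the $\mathbb{Q}$-factorial dlt modification of Theorem \ref{t-model}(1) and tracks the exceptional contributions. On the $\mathbb{Q}$-factorial model $K_{X_k}$ becomes $\mathbb{Q}$-Cartier, so the perturbed pair $(X_k,(1-\epsilon)(B_k+\lambda_k D_k))$ is klt for small $\epsilon>0$, supplying the required $\Phi_k$. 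The delicate part of this reduction is checking that the strict inequality $\lambda_k<\lambda$ is preserved under the modification and that no coefficient bookkeeping takes us out of the DCC set $I'$; once this routine check is in place, the comparison above completes the proof.
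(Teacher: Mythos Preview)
Your approach is the same as the paper's: reduce to a $\mathbb{Q}$-factorial setting, pass to a limit threshold $\lambda$, and apply Theorem \ref{t-a} to $B_k+\lambda D_k$ using that $B_k+\lambda_k D_k\geq (1-\delta)(B_k+\lambda D_k)$ for $k\gg 0$. However, there is a genuine gap.

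To invoke Theorem \ref{t-a} you must have $\coeff(C_k)\subset I'$ with $I'\subset [0,1]$. You never check this, and your set $I'=I\cup \lambda J\cup (I+\lambda J)$ is \emph{not} contained in $[0,1]$ in general. Indeed, since $\lambda>\lambda_k$, a component $P$ with $\mult_P(B_k+\lambda_k D_k)$ close to $1$ and $\mult_P D_k>0$ will have $\mult_P(B_k+\lambda D_k)>1$, so Theorem \ref{t-a} does not apply to $C_k$ as stated. The paper closes this gap with a short but necessary DCC argument: if along a subsequence some coefficient $c_k:=\mult_{P_k}(B_k+\lambda D_k)$ exceeds $1$, then from $\mult_{P_k}(B_k+\lambda_k D_k)\leq 1$ one gets $\mult_{P_k}D_k\leq 1/\lambda_k\leq 2/\lambda$ and hence
\[
1<c_k=\mult_{P_k}(B_k+\lambda_k D_k)+(\lambda-\lambda_k)\mult_{P_k}D_k\leq 1+\tfrac{2(\lambda-\lambda_k)}{\lambda}\to 1.
\]
Thus the $c_k$ lie in the DCC set $I+\lambda J$, exceed $1$, yet are squeezed down to $1$, forcing a strictly decreasing subsequence and a contradiction. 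Only after this can one take the coefficient set to be $(I+\lambda J)\cap[0,1]$ and legitimately apply Theorem \ref{t-a}. You should insert this step; the rest of your argument then goes through exactly as in the paper.

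A minor remark on the $\mathbb{Q}$-factorial reduction: you do not need to produce $\Phi_k$ from $(1-\epsilon)(B_k+\lambda_k D_k)$. Once $X_k$ is $\mathbb{Q}$-factorial and $(X_k,B_k+\lambda_k D_k)$ is dlt (after the modification of Theorem \ref{t-model}(1), adding exceptional divisors with coefficient $1$ to $B_k$, which keeps the coefficient set DCC), the pair $(X_k,(1-\epsilon)(B_k+\lambda_k D_k))$ is automatically klt for small $\epsilon>0$, and the log canonical threshold is unchanged since the modification is crepant for $K_{X_k}+B_k+\lambda_k D_k$.
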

\begin{proof} Replacing $X$ by a $\mathbb{Q}$-factorial modification (cf. Theorem
\ref{t-model}), we may assume that $X$ is $\mathbb{Q}$-factorial.  Suppose that there is a
sequence of triples $(X_i,B_i,D_i)$ as above such that $\lambda _i=\lct (X_i,B_i;D_i)$ is
non-decreasing.  If $\lambda =\lim \lambda _i$ and $K=I+\lambda J$ then $K$ satisfies the
DCC and $(X_i,B_i+\lambda _iD_i)$ is log canonical but not Kawamata log terminal for all
$i=1,2,\ldots$.  We claim that all but finitely many of the coefficients of
$B_i+\lambda D_i$ belong to $[0,1]$.  If this were not the case, then consider a
subsequence such that $\mult_{P_i}(B_i+\lambda D_i)>1$.  We may assume that
$\lambda _i\geq \lambda /2 >0$.  Since $\mult_{P_i}(\lambda _i D_i)\leq 1$, it follows
that $\mult_{P_i}(D_i)\leq 1/\lambda _i \leq 2/\lambda$.  But then
\begin{align*} 
1&\leq \mult_{P_i}(B_i+\lambda D_i)\\
 &=\mult_{P_i}(B_i+\lambda_i D_i)+(\lambda -\lambda _i)\mult_{P_i}( D_i)\\
 &\leq 1+\frac {2(\lambda -\lambda _i)}\lambda.
\end{align*} 

Since $ \mult_{P_i}(B_i+\lambda D_i)$ belongs to the DCC set $K$ and since
$\lim \frac {2(\lambda -\lambda _i)}\lambda=0$, this is a contradiction.  Therefore we may
assume that the coefficients of $B_i+\lambda D_i$ belong to the DCC set $K\cap [0,1]$.

Note that for any $\delta >0$,
\[
(1-\delta )(B_i+\lambda D_i)\leq (B_i+\lambda _i D_i)
\] 
for all $i\gg 0$ and hence $(X_i, (1-\delta )(B_i+\lambda D_i))$ is log canonical.  By
Theorem \ref{t-a}, $(X_i, B_i+\lambda D_i)$ is also log canonical.  But then
$\lambda =\lambda _i$ and hence the sequence $\lambda _i$ is eventually constant as
required.
\end{proof}

\makeatletter
\renewcommand{\thetheorem}{\thesection.\arabic{theorem}}
\@addtoreset{theorem}{section}
\makeatother
\section{Boundedness}

\begin{proposition}\label{c-model} Fix $w\in \mathbb{R} _{>0}$, $n\in \mathbb{N}$ and
a set $I\subset [0,1]$ which satisfies the DCC.  Let $(Z,D)$ be a projective log smooth
$n$-dimensional pair where $D$ is reduced and $\mathbf M _D$ the $b$-divisor corresponding
to the strict transform of $D$ plus the exceptional divisors.  Then there exists
$f\colon\map Z'.Z.$, a finite sequence of blow ups along strata of $\mathbf M_D$, such
that if
\begin{enumerate} 
\item $(X,B)$ is a projective log smooth $n$-dimensional pair
\item $g\colon\map X.Z.$ is a finite sequence of blow ups along strata of $\mathbf M _D$,
\item $\coeff (B)\subset I$,
\item $g_*B\leq D$, and
\item $\vol(X,K_X+B)=w$,
\end{enumerate} 
then $\vol(Z,K_{Z'}+\mathbf M_{B,Z'})=w$ where $\mathbf M_{B,Z'}$ is the strict transform
of $B$ plus the $\rmap Z'.X.$ exceptional divisors.
\end{proposition}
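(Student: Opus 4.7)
The plan is to argue by contradiction, following the limit-and-reduction template from the proof of Theorem \ref{t-voldcc}. Suppose no such $Z'$ exists. Since the candidate models form a countable collection (each being a finite sequence of blow ups along strata of $\mathbf{M}_D$), enumerate them $\{Z^{(k)}\}_{k\ge 1}$ with every candidate appearing infinitely often. For each $k$, the failure of $Z^{(k)}$ provides a pair $(X_k,B_k)$ satisfying (1)--(5) with $\vol(X_k,K_{X_k}+B_k)=w$ and $\vol(Z^{(k)},K_{Z^{(k)}}+\mathbf{M}_{B_k,Z^{(k)}})>w$, the reverse inequality being automatic from Lemma \ref{l-volumes}.

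By the standard diagonal argument used in the proof of Theorem \ref{t-voldcc}, pass to a subsequence such that for every toroidal divisor $E$ over $(Z,D)$ the sequence $\mathbf{M}_{B_k}(E)$ is eventually non-decreasing, and set $\mathbf{B}:=\lim_k \mathbf{M}_{B_k}$. The coefficients of $\mathbf{B}$ lie in the DCC set $I\cup\{1\}$, so Lemma \ref{l-stable} supplies a reduction $(Z^\star,\mathbf{B}^\star)$ of $(Z,\mathbf{B})$ with $\mathbf{B}^\star\ge \mathbf{L}_{\mathbf{B}^\star_{Z^\star}}$; since $Z^\star\to Z$ is itself a finite sequence of blow ups along strata of $\mathbf{M}_D$, we have $Z^\star=Z^{(k)}$ for infinitely many $k$.

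The main obstacle is to show that this $Z^\star$ already satisfies the conclusion of the proposition, which will yield the desired contradiction. From Lemma \ref{l-volumes}, the coefficientwise inequality $\mathbf{M}_{B_k,Z^\star}\le \mathbf{B}_{Z^\star}$ for $k\gg 0$, and the fact that the reduction preserves volumes, we have
\[
w=\vol(X_k,K_{X_k}+B_k)\le \vol(Z^\star,K_{Z^\star}+\mathbf{M}_{B_k,Z^\star})\le \vol(Z^\star,K_{Z^\star}+\mathbf{B}_{Z^\star})=\vol(Z^\star,K_{Z^\star}+\mathbf{B}^\star_{Z^\star}),
\]
so it suffices to prove the last quantity equals $w$. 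Suppose instead it is strictly greater than $w$, and choose $\epsilon>0$ with $\vol(Z^\star,K_{Z^\star}+(1-\epsilon)\mathbf{B}^\star_{Z^\star})>w$. Let $W\to Z^\star$ be a toroidal morphism extracting every divisor $E$ with $a_E(Z^\star,(1-\epsilon)\mathbf{B}^\star_{Z^\star})<0$. The chain $\mathbf{B}\ge \mathbf{B}^\star\ge \mathbf{L}_{\mathbf{B}^\star_{Z^\star}}\ge \mathbf{L}_{(1-\epsilon)\mathbf{B}^\star_{Z^\star}}$ together with the monotonicity of the $\mathbf{M}_{B_k}$ forces $\mathbf{M}_{B_k,W}\ge \mathbf{L}_{(1-\epsilon)\mathbf{B}^\star_{Z^\star},W}$ for $k\gg 0$; since the right-hand side vanishes on divisors exceptional over $W$, this inequality transports to any common toroidal refinement of $X_k$ and $W$, and Lemma \ref{l-volumes} then gives
\[
w=\vol(X_k,K_{X_k}+B_k)\ge \vol(Z^\star,K_{Z^\star}+(1-\epsilon)\mathbf{B}^\star_{Z^\star})>w,
\]
a contradiction. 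Thus $\vol(Z^\star,K_{Z^\star}+\mathbf{B}^\star_{Z^\star})=w$, and for each $k$ with $Z^{(k)}=Z^\star$ every inequality in the chain above is an equality, contradicting the strict inequality imposed on $(X_k,B_k)$.
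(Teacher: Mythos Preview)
Your argument has a genuine gap in the way the contradiction closes. After you pass to a subsequence to make the coefficients $\mathbf{M}_{B_k}(E)$ eventually non-decreasing, and then build $\mathbf{B}$ and the reduction $Z^\star$, you correctly show that $\vol(Z^\star,K_{Z^\star}+\mathbf{M}_{B_k,Z^\star})=w$ for all sufficiently large $k$ \emph{in the subsequence}. To derive a contradiction you need one such $k$ for which $(X_k,B_k)$ was chosen as a counterexample to $Z^\star$, i.e.\ $Z^{(k)}=Z^\star$. You assert this holds because $Z^\star$ appears infinitely often in the original enumeration, but the diagonalisation step may have discarded every index $k$ with $Z^{(k)}=Z^\star$; nothing in the standard diagonal argument guarantees that a prescribed infinite set of indices survives. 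Since $Z^\star$ is only determined \emph{after} the subsequence is chosen, you cannot arrange in advance to keep those indices. There is no obvious quick fix: the model $Z^\star$ depends on $\mathbf{B}$, which depends on the subsequence, so the dependency is genuinely circular.

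It is worth contrasting this with the paper's proof, which is direct rather than by contradiction and uses an extra ingredient you have not invoked. The paper first applies Theorem \ref{t-voldcc} to obtain a gap $\delta>0$ with no volumes in $(w,w+\delta]$, and then invokes Theorem \ref{t-psef} to get a uniform constant $a<1$ (independent of $(X,B)$) such that $K_Y+a\Gamma$ is still big with volume at least $\frac{w}{w+\delta}\vol(K_Y+\Gamma)$. One then takes $Z'$ to be a terminalisation of $(Z,aD)$: this depends only on $a$ and $(Z,D)$, not on any particular $(X,B)$, and a short computation shows $w\le\vol(Z',K_{Z'}+\mathbf{M}_{B,Z'})\le w+\delta$ for \emph{every} admissible $(X,B)$, hence equality by the gap. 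The essential point your argument is missing is a construction of $Z'$ that is uniform in $(X,B)$; the paper achieves this via the pseudo-effective threshold bound, which does real work here and does not seem to be replaceable by the limit-and-reduction template alone.
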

\begin{proof} We may assume that $1\in I$.  Let
\[
V=\{\, \vol(Y,K_Y+\Gamma) \,|\, (Y,\Gamma )\in \mathfrak{D}\,\}
\]
where $\mathfrak{D}$ is the set of all $n$-dimensional projective log smooth pairs such
that $K_Y+\Gamma$ is big, $\coeff (\Gamma )\subset I$, $g\colon\map Y.Z.$ is a birational
morphism and $g_*\Gamma \leq D$.  By Theorem \ref{t-voldcc}, $V$ satisfies the DCC.
Therefore, there is a constant $\delta >0$ such that if
$\vol (Y,K_Y+\Gamma )\leq w+\delta$, then $\vol(Y,K_Y+\Gamma )=w$.  Notice also that by
Theorem \ref{t-psef} there exists an integer $r>0$ such that if
$(Y,\Gamma )\in \mathfrak{D}$, then $K_Y+\frac {r-1}r\Gamma $ is big.  We now fix
$\epsilon >0$ such that
\[
(1-\epsilon )^n>\frac w{w+\delta },\qquad \text{and let}\qquad a=1-\frac \epsilon r.
\]
Since $$K_Y+a\Gamma =(1-\epsilon)(K_Y+\Gamma )+\epsilon (K_Y+\frac {r-1}r\Gamma )$$ it
follows that
\begin{align*} 
\vol(Y,K_Y+a\Gamma ) &\geq \vol ((1-\epsilon)(Y,K_Y+\Gamma))\\
                     &=(1-\epsilon)^n\vol(Y,K_Y+\Gamma)\\
                     &>\frac w{w+\delta }\vol(Y,K_Y+\Gamma).
\end{align*} 

Since $(Z,aD)$ is Kawamata log terminal, there is a sequence of blow ups
$f\colon\map Z'.Z.$ of the strata with the following property: if
$K_{Z'}+\Psi=f^*(K_Z+aD)+E$ where $\Psi \wedge E=0$, then $(Z',\Psi)$ is terminal.  Let
$\mathfrak{F}$ be the set of pairs $(X,B)$ satisfying properties (1-5) above such that
$\phi\colon \rmap X.Z'.$ is a morphism.  If $(X,B)\in \mathfrak{F}$ and $B_{Z'}=\phi _*B$,
then $f_*(aB_{Z'})\leq aD$ so that if
\[
K_{Z'}+\Phi=f^*(K_Z+f_*(aB_{Z'}))+F
\] 
where $\Phi \wedge F=0$, then $(Z,\Phi )$ is terminal.  We then have
\begin{align*} 
\vol(Z',K_{Z'}+aB_{Z'}) &=\vol(Z',K_{Z'}+aB_{Z'}\wedge \Phi )\\
                       &=\vol(X,K_X+\phi ^{-1}_*(aB_{Z'}\wedge \Phi ))\\
                       &\leq\vol(X,K_X+B),
\end{align*} 
where the first line follows from Lemma \ref{l-volumes}(3), the second since
$(Z',aB_{Z'}\wedge \Phi)$ is terminal and the third since
$\phi ^{-1}_*(aB_{Z'}\wedge \Phi )\leq B$.  But then
\[
w\leq \vol (Z',K_{Z'}+B_{Z'})\leq \frac {w+\delta }w\vol (Z',K_{Z'}+aB_{Z'})\leq w+\delta.
\]
By what we observed above, we then have $\vol(Z',K_{Z'}+B_{Z'})=w$ as required.

To conclude the proof, it suffices to observe that if $(X,B)$ is a pair satisfying
properties (1-5) above, then after blowing up $X$ along finitely many strata of
$\mathbf M _D$ and replacing $B$ by its strict transform plus the exceptional divisors, we
may assume that $\rmap X.Z'.$ is a morphism and hence that $(X,B)\in \mathfrak{F}$.
\end{proof}

\begin{proposition}\label{p-lc} Fix $n\in \mathbb{N}$, $d>0$ and
a set $I\subset [0,1]\cap \mathbb{Q}$ which satisfies the DCC.  Let
$\mathfrak{F}_{\rm lc}(n,d,I)$ be the set of pairs $(X,B)$ which are the disjoint union
of log canonical models $(X_i,B_i)$ where $\dim X_i=n$, $\coeff (B_i)\subset I$ and
$(K_X+B)^n=d$.  Then $\mathfrak{F} _{\rm lc}(n,d,I)$ is bounded.
\end{proposition}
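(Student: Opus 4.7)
The plan is to upgrade log birational boundedness to genuine boundedness by running a relative good minimal model program in families. First, by Theorem \ref{t-bb}(3), there exists a positive integer $m = m(n, I)$ such that $\phi_{m(K_X+B)}$ is birational for every $(X,B)\in\mathfrak{F}_{\rm lc}(n,d,I)$; together with the fixed volume $d$, Proposition \ref{p-BB} furnishes a log birationally bounding family, that is, a projective morphism $(\mathcal{Z},\mathcal{D})\to T$ over a scheme $T$ of finite type such that for every $(X,B)\in\mathfrak{F}_{\rm lc}(n,d,I)$ there is a closed point $t\in T$ and a birational map $f_t\colon \rmap \mathcal{Z}_t.X.$ whose exceptional divisors, together with the strict transform of $B$, are supported on $\mathcal{D}_t$.

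Next, by passing to a resolution, taking a finite flattening cover of $T$, and stratifying $T$ into finitely many locally closed pieces, I may assume $(\mathcal{Z},\mathcal{D})$ is log smooth over $T$ with geometrically irreducible strata. Applying Proposition \ref{c-model} on each component of $T$, I replace $\mathcal{Z}$ by a suitable sequence of blow ups of strata of $\mathbf{M}_{\mathcal{D}}$ to arrange that for every $(X,B)$ associated to $t$,
\[
\vol(\mathcal{Z}_t,K_{\mathcal{Z}_t}+\mathbf{M}_{B,\mathcal{Z}_t})=d=\vol(X,K_X+B),
\]
where $\mathbf{M}_{B,\mathcal{Z}_t}$ is the strict transform of $B$ plus the $f_t$-exceptional divisors taken with coefficient $1$. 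Because the volumes agree, $(X,B)$ is then the log canonical model of $(\mathcal{Z}_t,\mathbf{M}_{B,\mathcal{Z}_t})$.

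The pairs $(X,B)$ are now in bijection with the pairs $(\mathcal{Z}_t,\Phi_t)$, $\Phi_t=\mathbf{M}_{B,\mathcal{Z}_t}$, whose support lies in $\mathcal{D}_t$ and whose coefficients lie in $I\cup\{1\}$. Using Theorem \ref{t-volddc1}, the set of volumes of such pairs is DCC, so once $d$ is fixed, the coefficient vectors of $\Phi_t$ on the finitely many components of $\mathcal{D}_t$ realizing volume $d$ cannot form an infinite non-decreasing chain (strict monotonicity of volume in the boundary coefficients past the pseudo-effective threshold rules this out); a diagonal DCC extraction then shows only finitely many such vectors arise over each $t$. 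Stratifying $T$ further by these finitely many coefficient patterns, I may assume there is a single $\mathbb{Q}$-divisor $\Phi$ on $\mathcal{Z}$ with $\Phi_t=\mathbf{M}_{B,\mathcal{Z}_t}$ on each stratum.

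Finally, because each fibre $(\mathcal{Z}_t,\Phi_t)$ has a good minimal model (its log canonical model $(X,B)$), Theorem \ref{t-gmm} produces a relative good minimal model $\pi\colon\map\mathcal{X}.T.$ of $(\mathcal{Z},\Phi)$ over $T$, whose relative ample model realizes fibrewise the log canonical model. With $\mathcal{B}$ the pushforward of $\Phi$, the family $\pi\colon\map\mathcal{X}.T.$ with divisor $\mathcal{B}$ is the desired bounded family. The principal obstacle is the finiteness argument in the previous paragraph: ensuring that at each $t$ only finitely many $\Phi_t$ with coefficients in $I$ realize the fixed volume $d$ requires using Theorem \ref{t-voldcc} in conjunction with Proposition \ref{c-model}, since only on the common higher model is the volume strictly monotone in the boundary coefficients and only there can DCC rule out infinite antichains.
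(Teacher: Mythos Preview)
Your overall architecture is the same as the paper's, but the third paragraph contains a genuine gap. You assert that ``the coefficient vectors of $\Phi_t$ \ldots\ realizing volume $d$ cannot form an infinite non-decreasing chain (strict monotonicity of volume in the boundary coefficients past the pseudo-effective threshold rules this out).'' This is false: the volume is \emph{not} strictly monotone in the boundary. Lemma~\ref{l-222} says precisely that if $B'\ge B$ and $\vol(X,K_X+B')=\vol(X,K_X+B)$, the two pairs share a log canonical model and $f_*B'=f_*B$; the difference $B'-B$ is $f$-exceptional, not zero. So over a fixed fibre $\mathcal Z_t$ one can perfectly well have infinitely many distinct $\Phi_t$ with coefficients in $I\cup\{1\}$ and the same volume $d$ (just vary the coefficients on components that get contracted). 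Consequently your ``diagonal DCC extraction'' does not yield finiteness of boundaries, and you cannot stratify $T$ into finitely many pieces carrying a single divisor $\Phi$ as you claim.

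The paper sidesteps this by arguing by contradiction. One supposes no uniform $N$ works and picks a sequence $(X_i,B_i)$ with $i!(K_{X_i}+B_i)$ not very ample. After log birational bounding and applying Proposition~\ref{c-model}, one passes to a subsequence so that the boundaries $\Phi_{t_i}$ are componentwise non-decreasing in $i$. Deformation invariance (Theorem~\ref{t-definv}) gives $\vol(Z'_{t_i},K_{Z'_{t_i}}+\Phi^k_{t_i})=d$ for all $i,k$. Now the key point is not that the $\Phi^k$ are finitely many, but that by Lemma~\ref{l-222} all of the pairs $(Z'_{t_i},\Phi^k_{t_i})$, $k\ge 1$, have the \emph{same} log canonical model as $(Z'_{t_i},\Phi^1_{t_i})$, with equal pushforward boundary. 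One then runs Theorem~\ref{t-gmm} once, for the single divisor $\Phi^1$, to get a relative log canonical model $W/T$, and reads off a uniform $N$ with $N(K_W+\psi_*\Phi^1)$ very ample; this $N$ then works for every $(X_i,B_i)$, contradiction. The moral: you do not need (and cannot get) finiteness of the $\Phi_t$; you need only that along a monotone subsequence all their log canonical models coincide, which is exactly what Lemma~\ref{l-222} delivers.
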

\begin{proof} Since $d=\sum d_i$ where $d_i=(K_{X_i}+B_i)^n$ and by Theorem \ref{t-bb} the
$d_i$ belong to a DCC set, it follows easily that there are only finitely many
possibilities for the $d_i$.  We may therefore assume that $X$ is irreducible.  It
suffices to show that there is an integer $N>0$ such that if $(X,B)$ is an $n$-dimensional
log canonical model with $\coeff(B)\subset I$ and $(K_X+B)^n=d$, then $N(K_{X}+B)$ is very
ample.  Suppose that this is not the case and let $(X_i,B_i)$ be a sequence of
$n$-dimensional log canonical models with $\coeff (B_i)\subset I$ and $(K_{X_i}+B_i)^n=d$
such that $i!(K_{X_i}+B_i)$ is not very ample for all $i>0$.

By Theorem \ref{t-bb} and Proposition \ref{p-BB}, the set of such pairs $(X_i,B_i)$ is log
birationally bounded.  Therefore there is a projective morphism $\pi \colon\map Z.T.$ and
a log pair $(Z,D)$ which is log smooth over a variety $T$, such that for any pair
$(X_i,B_i)$ as above, there is a closed point $t_i\in T_i$ and a birational map
$f_i\colon\rmap Z_{t_i}.X_i.$ such that the support of $D_{t_i}$ contains the strict
transform of $B_i$ plus the $f_i$ exceptional divisors.  Passing to a subsequence, we may
assume that the $t_i$ belong to a fixed irreducible component of $T$.  We may therefore
assume that $T$ is irreducible and the components of $D$ are geometrically irreducible
over $T$.

Applying Proposition \ref{c-model} to $(Z_{t_1},D_{t_1})$, we obtain a model
$\map Z_{t_1}'.Z_{t_1}.$ and $\map Z'.Z.$ the morphism obtained by blowing up the
corresponding strata of $\mathbf M _D$.

Denote by $\Phi _{t_i}=({f'}_i^{-1})_*B_i+\Exc(f'_i) \leq D'_{t_i}$, where
$f'_i\colon\rmap Z'_{t_i}.X_i.$ is the induced birational map.  Passing to a subsequence,
we may also assume that for any irreducible component $P$ of the support of
$D':={\mathbf M }_{D,Z'}$, the coefficients of $\Phi _{t_i}$ along $P_{t_i}$ are
non-decreasing.  Let $\Phi^i$ be the divisor with support contained in $D'$ such that
$\Phi^i|_{Z'_{t_i}}=\Phi _{t_i}.$

We claim that for any pair $(X_i,B_i)$ as above
\[
\vol(Z'_{t_i},K_{Z'_{t_i}}+\Phi_{t_i})=d.
\] 
To see this, by the
proof of Corollary of \ref{c-voldcc}, we can construct $\map Z''.Z'.$ by a sequence of
blow ups along strata of $\mathbf M _D$ such that $\rmap Z''_{t_i}.X_i.$ is a rational
map and we have 
\[
\vol(Z''_{t_i}, K_{Z''_{t_i}}+\Psi _{t_i})=d
\]
where $\Psi _{t_i}$ is the strict transform of $B_i$ plus the $Z''_{t_i}/ X_i$ exceptional
divisors.  If $\Psi$ is the divisor supported on $\Supp(\mathbf M _{D,Z''})$ such that
$\Psi |_{Z''_{t_i}}=\Psi _{t_i}$, then
\begin{align*} 
d  &=\vol(Z''_{t_i},K_{Z''_{t_i}}+\Psi _{t_i})\\
   &=\vol(Z''_{t_1},K_{Z''_{t_1}}+\Psi |_{Z''_{t_1}})\\
   &=\vol(Z'_{t_1}, K_{Z'_{t_1}}+\Phi^i |_{Z'_{t_1}})\\
   &=\vol(Z'_{t_i},K_{Z'_{t_i}}+\Phi_{t_i}),
\end{align*} 
where the second and fourth equalities follow from Theorem \ref{t-definv} and the third
one follows from Proposition \ref{c-model}.

By Theorem \ref{t-gmm}, we may assume $(Z',\Phi ^1)$ has a relative log canonical model
$\psi \colon\rmap Z'.W.$ over $ T$, which fibere by fibre
$\psi _{t_i}\colon\rmap Z'_{t_i}.W_{t_i}.$ gives the log canonical model for
$(Z'_{t_i},\Phi ^1_{t_i})$ for all $i\geq 1$.  Notice that by Theorem \ref{t-definv},
\[
d=\vol(Z'_{t_k},K_{Z'_{t_k}}+\Phi ^k_{t_k})=\vol(Z'_{t_1},K_{Z'_{t_1}}+\Phi ^k_{t_1})
\]
for all $k>0$. Since we have assumed that
\[
\Phi ^1\leq \Phi ^2\leq \Phi^3\leq \ldots,
\] 
it follows by Lemma \ref{l-222} that $\psi _{t_i}\colon\rmap Z'_{t_i}.W_{t_i}.$ is also a
log canonical model of $(Z'_{t_i},\Phi ^k_{t_i})$ for all $k\geq 1$,
$\psi _{t_i*}\Phi ^k_{t_i}=\psi _{t_i*}\Phi ^1_{t_i}$ and there is an isomorphism
$\alpha_i\colon W_{t_i}\cong X_i$.
 
There is an integer $N>0$ such that $N(K_W+\psi _*\Phi ^1)$ is very ample over $T$ and so
$N(K_{W_{t_i}}+\psi _{t_i*} \Phi _{t_i}^1)$ is very ample for all $i>0$.  Since
\[
\psi _{t_i*}\Phi _{t_i}^1=\psi _{t_i*}\Phi _{t_i}^i=\alpha_i^*(B_i),
\] 
it follows that $N(K_{X_i}+B_i)$ is very ample for all $i>0$ which is the required
contradiction.
\end{proof}

\begin{lemma}\label{l-222} Let $(X,B)$ be a log canonical pair such that $K_X+B$ is big
and $f\colon\rmap X.W.$ the log canonical model of $(X,B)$.  If $B'\geq B$, $(X, B')$ is
log canonical and $\vol(X,K_X+B)=\vol(X,K_X+B')$, then $f$ is also the log canonical model
of $(X,B')$.
\end{lemma}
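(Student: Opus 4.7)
Let $D := B' - B \geq 0$, and choose a common log resolution $p\colon Y \to X$, $q\colon Y \to W$ resolving the indeterminacies of $f$. Since $f$ is the log canonical model of $(X,B)$, the divisor $A := K_W + f_*B$ is ample on $W$ and we may write
\[
p^*(K_X+B) = q^*A + E, \qquad E \geq 0 \text{ effective and } q\text{-exceptional}.
\]
In particular $\vol(X, K_X+B) = A^n$.

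The plan is to reduce everything to the single claim that $f_*D = 0$. Granted the claim, $K_W + f_*B' = K_W + f_*B = A$ is ample, $(W, f_*B') = (W, f_*B)$ is log canonical, and on $Y$
\[
p^*(K_X+B') - q^*(K_W + f_*B') = E + p^*D,
\]
which is effective and $q$-exceptional: $E$ is $q$-exceptional by hypothesis, and $q_*(p^*D) = f_*D = 0$ forces $p^*D$ to be $q$-exceptional as well. Hence $f$ is $(K_X+B')$-non-positive and therefore is the log canonical model of $(X,B')$.

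To establish $f_*D = 0$, consider $\varphi(t) := \vol(X, K_X+B+tD)$ on $t \in [0,1]$. Since $D \geq 0$, $\varphi$ is non-decreasing; by hypothesis $\varphi(0) = \varphi(1)$, so $\varphi \equiv A^n$ on $[0,1]$. Computing on $Y$, $p^*(K_X+B+tD) = q^*A + E + tp^*D$, whence
\[
A^n = \vol(Y,\, q^*A + E + tp^*D) \geq \vol(Y,\, q^*A + tp^*D)
\]
by monotonicity (as $E \geq 0$). Suppose for contradiction that $f_*D \neq 0$. Pick a component $D_0$ of $D$ with $f_*D_0 \neq 0$ and let $\tilde D_0 := p^{-1}_* D_0$ on $Y$; since $q(\tilde D_0) = f_*D_0$ is a divisor on $W$, $\tilde D_0$ is not $q$-exceptional. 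By the projection formula,
\[
(q^*A)^{n-1} \cdot \tilde D_0 = A^{n-1} \cdot f_*D_0 > 0,
\]
since $A$ is ample and $f_*D_0$ is a nonzero effective divisor. Differentiability of the volume function at the big nef class $q^*A$ in the direction $\tilde D_0$ then gives $\vol(Y,\, q^*A + s\tilde D_0) > A^n$ for all sufficiently small $s > 0$. Taking $\epsilon > 0$ to be the coefficient of $D_0$ in $D$, so that $p^*D \geq \epsilon \tilde D_0$, monotonicity yields $\vol(Y,\, q^*A + tp^*D) \geq \vol(Y,\, q^*A + t\epsilon \tilde D_0) > A^n$ for small $t > 0$, contradicting $\varphi \equiv A^n$.

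The main technical obstacle is the strict increase of the volume function upon adding a non-$q$-exceptional effective divisor to the big nef class $q^*A$. This is exactly the content of differentiability of the volume function in the big cone and reduces to the strict positivity of the intersection number $A^{n-1} \cdot f_*D_0$; alternatively, one can argue elementarily by approximating $q^*A$ by ample divisors via Fujita approximation and computing $(H + s\tilde D_0)^n > H^n$ for $H$ ample.
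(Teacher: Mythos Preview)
Your proof is correct and follows essentially the same approach as the paper: both reduce to showing that every component of $B'-B$ is contracted by $f$, and both establish this via the differentiability of the volume function (the paper cites \cite{LM09} explicitly and computes the one-sided derivative $n\cdot\vol_{X|E}(f^*A)\geq n\cdot E\cdot (f^*A)^{n-1}$ directly, while you phrase the same computation as a contradiction). The only cosmetic differences are that the paper first replaces $X$ by a resolution so that $f$ becomes a morphism rather than passing to a common resolution $Y$, and that the paper concludes by identifying $H^0(X,m(K_X+B'))$ with $H^0(X,mf^*A)$ whereas you verify the $(K_X+B')$-non-positivity condition; since $f$ is a birational contraction, $p$-exceptional divisors are $q$-exceptional, so your identification $q_*(p^*D)=f_*D$ is justified.
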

\begin{proof} Replacing $X$ by an appropriate resolution, we may assume that
$f\colon\map X.W.$ is a morphism. Let $A=f_*(K_X+B)$, then $A$ is ample and
$F:=K_X+B-f^*A$ is effective and $f$-exceptional.  We have for any $t\geq 0$
\begin{align*} 
\vol(X,K_X+B) &=\vol (X,K_X+B+t(B'-B))\\
              &\geq \vol(X,f^*A+t(B'-B))\\
             & \geq \vol (X,f^*A)\\
             & =\vol (X,K_X+B).
\end{align*} 

But then
\[
\vol(X,f^*A+t(B'-B))=\vol(X,f^*A)\qquad \forall t\in [0,1]
\]
is a constant function.  If $E$ is a component of $B'-B$ then by \cite{LM09} we have
\begin{align*} 
0 &=\frac d{dt}\vol(X,f^*A+tE)|_{t=0}\\
  &=n\cdot \vol_{X|E}(H)\\
  &\geq n \cdot E \cdot f^*A^{n-1}\\
  &=n\cdot \deg f_*E.
\end{align*} 
Therefore $E$ is $f$-exceptional and so 
\begin{align*} 
H^0(X,\ring X.(m(K_X+B'))) &=H^0(X,\ring X.(mf^*A+m(E+F)))\\
                          &=H^0(X,\ring X.(mf^*A))\\ 
                          &=H^0(X,\ring X.(m(K_X+B)))
\end{align*} 
and thus $f$ is the log canonical model of $(X,B')$. 
\end{proof}

\begin{proof}[Proof of \ref{t-bound}] Let $(X,B)\in \mathfrak{F} _{\rm slc}(n,I,d)$ and
$\map X^\nu.X.$ be its normalisation.  By Proposition \ref{p-lc}, if we write
\[
X^\nu=\coprod X_i\qquad \text{and} \qquad (K_X+B)|_{X_i}=K_{X_i}+B_i,
\] 
then the pairs $(X_i,B_i)$ are bounded.  In particular, there exists a finite set of
rational numbers $I_0\subset I$ such that
\[
\coeff (B_i)\subset I_0 \qquad \text{and} \qquad (K_{X_i}+B_i)^{n}=d_i\in \mathfrak{D}.
\]
By \cite{Kollar11} and \cite[5.3]{Kollar13}, the slc models $(X,B)$ are in one to one
correspondence with pairs $(X^\nu,B^\nu)$ and involutions $\tau\colon\map S^\nu.S^\nu.$ of
the normalisation of a divisor $S\subset \lfloor B^\nu \rfloor$ (the divisor $S$
corresponds to the double locus of $\map X^\nu.X.$) such that $\tau$ sends the different
$\Diff _{S^\nu}(B^\nu)$ to itself.  Since $\tau$ is an involution that fixes the ample
$\mathbb{Q}$-divisor $(K_{X^\nu}+B^\nu)|_S$, it follows that $\tau $ belongs to an
algebraic group.  Since fixing the different $\Diff _{S^\nu}(B^\nu)$ is a closed condition
the set of possible involutions $\tau$ corresponds to a closed subset of this algebraic
group and so $\tau$ is bounded.  Therefore the quadruples $(X,B,S,\tau)$ are bounded.
\end{proof}
\bibliographystyle{hamsplain}
\bibliography{/home/mckernan/Jewel/Tex/math}

\end{document}